\newtheorem{thm}{Theorem}[section]
\newtheorem{corollary}[thm]{Corollary}
\newtheorem{lemma}[thm]{Lemma}
\newtheorem{proposition}[thm]{Proposition}
\newtheorem{prop}[thm]{Proposition}
\newtheorem{conjecture}[thm]{Conjecture}
\newtheorem{thm-dfn}[thm]{Theorem-Definition}
\theoremstyle{definition}
\newtheorem{definition}[thm]{Definition}
\newtheorem{remark}[thm]{Remark}
\newtheorem{example}[thm]{Example}
\numberwithin{equation}{section}
\newcommand{\fg}{{\mathfrak g}}
\newcommand{\ft}{{\mathfrak t}}
\newcommand{\fl}{{\mathfrak l}}
\newcommand{\fb}{{\mathfrak b}}
\newcommand{\fu}{{\mathfrak u}}
\newcommand{\fp}{{\mathfrak p}}
\newcommand{\fa}{{\mathfrak a}}
\newcommand{\fc}{{\mathfrak c}}
\newcommand{\rW}{{\mathrm W}}
\newcommand{\bC}{{\mathbb C}}
\newcommand{\bZ}{{\mathbb Z}}
\newcommand{\mS}{\mathcal{S}}
\newcommand{\mF}{\mathcal{F}}
\newcommand{\mO}{\mathcal{O}}
\newcommand{\mL}{\mathcal{L}}
\newcommand{\on}{\operatorname}
\newcommand{\lra}{\longrightarrow}
\newcommand{\bs}{\backslash}
\newcommand{\is}{\simeq}
\newcommand{\Loc}{\on{LocSys}}
\newcommand{\quash}[1]{}  
\newcommand{\nc}{\newcommand}
\newcommand{\frakg}{{\mathfrak g}}
\newcommand{\frakk}{{\mathfrak k}}
\newcommand{\frakl}{{\mathfrak l}}
\newcommand{\frakp}{{\mathfrak p}}
\newcommand{\frakr}{{\mathfrak r}}
\newcommand{\fraks}{{\mathfrak s}}
\newcommand{\fraku}{{\mathfrak u}}
\newcommand{\frakM}{{\mathfrak M}}
\newcommand{\bbC}{{\mathbb C}}
\newcommand{\bbR}{{\mathbb R}}
\newcommand{\bbZ}{{\mathbb Z}}
\newcommand{\calB}{{\mathcal B}}
\newcommand{\calF}{{\mathcal F}}
\newcommand{\calH}{{\mathcal H}}
\newcommand{\calN}{{\mathcal N}}
\newcommand{\calO}{{\mathcal O}}
\newcommand{\calS}{{\mathcal S}}
\newcommand{\calZ}{{\mathcal Z}}
\newcommand{\bfM}{{\textbf M}}
\nc{\al}{{\alpha}} \nc{\be}{{\beta}} \nc{\ga}{{\gamma}}
\nc{\ve}{{\varepsilon}} \nc{\Ga}{{\Gamma}} 
\nc{\La}{{\Lambda}}
\nc{\ad }{{\on{ad }}}
\nc{\aff}{{\on{aff}}} \nc{\Aff}{{\mathbf{Aff}}}
\nc{\der}{{\on{der}}}
\nc{\diag}{{\on{diag}}}
\newcommand{\End}{{\on{End}}}
\nc{\Fl}{{\calF\ell}}
\nc{\Hg}{{\on{Higgs}}}
\newcommand{\Hom}{{\on{Hom}}}
\newcommand{\id}{{\on{id}}}
\nc{\Id}{{\on{Id}}}
\nc{\Ind}{{\on{Ind}}}
\nc{\Op}{{\on{Op}}}
\nc{\res}{{\on{res}}}
\newcommand{\Spec}{{\on{Spec}}}
\nc{\tr}{{\on{tr}}}
\newcommand{\GL}{{\on{GL}}}
\nc{\GSp}{{\on{GSp}}} \nc{\GU}{{\on{GU}}} \nc{\SL}{{\on{SL}}}
\nc{\SU}{{\on{SU}}} \nc{\SO}{{\on{SO}}}
\nc{\nh}{{\Loc_{J^p}(\tau')}}
\nc{\bnh}{{\Loc_{\breve J^p}(\tau')}}
\nc{\bU}{{\overline{U}}} \nc{\IC}{{\on{IC}}}
\newcommand{\dota}{\ddot{\text a}}
\newcommand{\beqn}{\begin{equation*}}
\newcommand{\eeqn}{\end{equation*}}
\newcommand{\beq}{\begin{equation}}
\newcommand{\eeq}{\end{equation}}
\newcommand{\frakgl}{{\frakg\frakl}}
\nc{\QM}{QM}
\nc{\eval}{\textup{ev}}
\begin{document}
\title{Real and symmetric matrices}

       \author{Tsao-Hsien Chen} 
        
       \address{School of Mathematics, University of Minnesota, Minneapolis, Vincent Hall, MN, 55455}
       \email{chenth@umn.edu}
       \author{David Nadler} 
        
        \address{Department of Mathematics, UC Berkeley, Evans Hall,
Berkeley, CA 94720}
        \email{nadler@math.berkeley.edu}
       
\maketitle
\begin{abstract}
We construct a family of involutions on the space 
$\frakgl_n'(\bC)$ of $n\times n$ matrices with real eigenvalues 
interpolating the complex conjugation 
and the transpose.  
We deduce from it a
stratified homeomorphism
between 
the space $\frakgl_n'(\bbR)$ of $n\times n$ real matrices
with real eigenvalues and the space $\fp_n'(\bC)$
of 
$n\times n$ symmetric matrices
with real eigenvalues, which restricts to a real analytic isomorphism between 
individual $\on{GL}_n(\bbR)$-adjoint orbits and 
$\on{O}_n(\bC)$-adjoint orbits.
We also establish similar results in more general settings of 
Lie algebras of classical types and quiver varieties.
To this end, we prove a general result about
involutions on hyper-K$\dota$hler quotients of linear spaces. 
We provide applications to the (generalized) Kostant-Sekiguchi correspondence,
singularities of real and symmetric adjoint orbit closures,
and Springer theory for real groups and symmetric spaces.

\end{abstract}
\tableofcontents

\newcommand{\bA}{{\mathbb A}}

\nc{\qQM}{\mathscr{QM}}

\nc{\sw}{\textup{sw}}
\nc{\inverse}{\textup{inv}}
\nc{\negative}{\textup{neg}}

\nc{\Sym}{\textup{Sym}}
\nc{\sym}{\mathit{sym}}
\nc{\Orth}{\textup{O}}
\nc{\Maps}{\textup{Maps}}
\nc{\sm}{\textup{sm}}

\nc{\image}{\textup{image}}

\nc{\risom}{\stackrel{\sim}{\to}}

\section{Introduction}

\subsection{Main results}
A key structural result in Lie theory is Cartan's classification of real forms
of a complex reductive Lie algebra $\fg$ in terms of 
holomorphic involutions.
It amounts to a bijection 
\beq\label{bijection}
\{\text{complex conjugations $\eta$ of}\ \fg\}/\text{isom}\longleftrightarrow\{\text{holomorphic involutions $\theta$ of }\fg\}/\text{isom}
\eeq
between isomorphism classes of complex conjugations and holomorphic involutions of 
$\fg$.
For example, in the case $\fg=\frak{gl}_n(\bbC)$, 
the complex conjugation $\eta(M)=\overline M$ 
with real form consisting of real matrices $\fg_\bbR=\frak{gl}_n(\bbR)$ 
corresponds to
the involution $\theta(M)=-M^t$ with 
$(-\theta)$-fixed points consisting of symmetric matrices $\fp=\frak p_n(\bC)$.
The interplay between the real $\fg_\bbR$ and symmetric $\fp$ pictures plays a fundamental role in the
structure and representation theory of real groups,
going back at least to Harish-Chandra's formulation of 
the representation theory of real groups in terms of 
$(\fg,K)$-modules.

One of the goals of the paper is to get a better understanding 
of Cartan's bijection and also the real and symmetric pictures for real groups
from the geometric point of view. 
To this end, let 
$\eta$ be a conjugation on $\fg$ and let $\theta$ be the corresponding involution under~\eqref{bijection}.
For simplicity, we assume $\eta$ is the split conjugation.
Then the subspace $\fg'$ of $\fg$ consisting of 
elements with real eigenvalues\footnote{Elements $x\in\fg$ such that the the adjoint action $\on{ad}_x:\fg\to\fg$ has only real eigenvalues.} is preserved by 
both $\eta$ and $-\theta$ and the first main result in the paper, Theorem \ref{intro: main 2}, is a construction 
of a real analytic family of involutions on $\fg'$
\beq\label{main:involutions}
\alpha_a:\fg'\lra\fg'\ \ \ \ a\in\on[0,1]
\eeq
interpolating the conjugation $\eta$ and the holomorphic involution $-\theta$,
that is, we have $\alpha_0=\eta$ and $\alpha_1=-\theta$, in the case 
when $\fg$ is of classical type.  
Using the family of involutions above
we prove the second main result of the paper, Theorem \ref{intro: main}, which says that 
there exists a stratified homeomorphism 
\beq\label{main:homeo}
\xymatrix{
\fg_\bbR' \ar[r]^-\sim& \fp'
}
\eeq
between the $\eta$ and ($-\theta$)-fixed points on
$\fg'$ compatible with various structures.\footnote{It is necessary to consider the subspace 
$\fg'\subset\fg$ but not the whole Lie algebra $\fg$ in the main results because in general the fixed points $\fg_\bbR=\fg^{\eta}$ and $\fp=\fg^{-\theta}$  have 
different dimensions and hence can't be homeomorphic to each other.} 
The family of involutions in~\eqref{main:involutions}
and the homeomorphism \eqref{main:homeo} can be thought as geometric refinements  
of 
Cartan's bijection \eqref{bijection}.

We deduce  several applications from the main results.
Assume $\fg$ is of classical types.
In Corollary \ref{intro: main 3}, We show that there exists a 
stratified homeomorphism 
\[
\xymatrix{
\calN_\bbR \ar[r]^-\sim& \calN_\fp
}
\]
between the real nilpotent cone $\calN_\bbR\subset\fg_\bbR$
and the symmetric nilpotent cone $\calN_\fp\subset\fp$
providing a lift of the celebrated Kostant-Sekiguchi correspondence between
real and symmetric nilpotent orbits.
In particular, it implies that $\calN_\bbR$ and $\calN_\fp$
have the same singularities, answering an open question (see, e.g., \cite[p354]{He}).
In Corollary \ref{formula for nearby cycles}, we show that Grinberg's nearby cycles sheaf on $\calN_\fp$
is isomorphic to the real Springer sheaf
given by the push-forward of the constant sheaf along the real Springer map, establishing a conjecture of 
Vilonen-Xue and the first author. 

The key ingredients in the proof are the
hyper-K\"ahler $\on{SU}(2)$-action on 
the space matrices arising from the quiver variety description in \cite{KP,KS,M,MV,Nak1}, and 
a general result about involutions on hyper-K\"ahler quotients of linear 
spaces (see Theorem \ref{intro: main 4}). The techniques used in the proof 
 are not specific to matrices and 
are applicable to a more general setting. 
For example, we also establish 
a quiver variety version of the main results.

We now describe the paper in more details.

\quash{
The classification goes as follows.
Let $\fg_\bbR$ be a real form of $\fg$ and let $\eta$ be the corresponding 
conjugation on $\fg$.
One can find a compact conjugation $\delta$ on $\fg$
satisfying $\eta\circ\delta=\delta\circ\eta:=\theta$ and 
the correspondence $\fg_\bbR\to\theta$ defines a bijection
\[\{\text{real forms $\fg_\bbR$ of}\ \fg\}/\text{isom}\longleftrightarrow\{\text{holomorphic involutions $\theta$ of }\fg\}/\on{Aut}(\fg)\]
between isomorphism classes of real forms and conjugacy classes of holomorphic involutions of 
$\fg$.
In this paper we show that 
there is continuous family of involutions
on $\fg$
interpolating the conjugation $\eta$ and the holomorphic involution $-\theta$
}

\subsubsection{Real-symmetric homeomorphisms for matrices}
Let us first illustrate our main results with a notable case accessible to a general audience.

Let $\frak{gl}_n(\bbC)\is\bC^{n^2}$ denote the space of $n\times n$ complex matrices.
%
%
Let $\frak{gl}_n(\bbR)\subset \frak{gl}_n(\bC)$ denote the real matrices,
i.e. those with real entries,
and
 $\fp_n(\bbC)\subset  \frak{gl}_n(\bbC)$ the  symmetric matrices,
 i.e. those equal to their transpose.
Introduce the following subspaces 
\[\frakgl_n'(\bbR)=\{x\in\frakgl_n(\bbR)|\text{\ eigenvalues of } x \text{ are real}\}\]
\[\frakp_n'(\bbC)=\{x\in\frakp_n(\bC)|\text{\ eigenvalues of } x \text{ are real}\}.\]
The real general linear group $\GL_n(\bbR)$ and complex orthogonal group $\Orth_n(\bbC)$ naturally act by conjugation on $\frak{gl}'_n(\bbR)$ 
 and $\fp_n'(\bC)$ respectively.
 The real orthogonal group $\Orth_n(\bbR) = \GL_n(\bbR) \cap \Orth_n(\bbC)$ 
acts on both $\frak{gl}_n'(\bbR)$
and $\fp_n'(\bC)$.
We also have the natural linear $\bbR^\times$-actions
 on both $\frak{gl}_n'(\bbR)$
and $\fp_n'(\bC)$.
 Consider the adjoint quotient map $\chi:\frak{gl}_{n}(\bC)\to\bC^n$
which
associates to each matrix $x\in\frak{gl}_n(\bC)$ the coefficients of its characteristic polynomial. Equivalently, one can think of it as giving the eigenvalues of the matrix (with
multiplicities).

Here is a notable  case of our general results.

\begin{thm}\label{intro: gln} 
There is an $\Orth_n(\bbR)\times\bbR^\times$-equivariant homeomorphism
\begin{equation}\label{eq:intro homeo gln}
\xymatrix{
\frak{gl}_n'(\bbR)\ar[r]^\sim&\fp'_n(\bbC)}
\end{equation}
which is compatible with the adjoint quotient map.
Furthermore, the homeomorphism restricts to a real analytic isomorphism between individual $\GL_n(\bbR)$-orbits and $\Orth_n(\bbC)$-orbits.
\end{thm}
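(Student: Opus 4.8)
The plan is to obtain Theorem~\ref{intro: gln} as the instance $\fg=\frakgl_n(\bC)$ of the general machinery of Theorems~\ref{intro: main 2} and~\ref{intro: main}, the remaining work being to match all the structures in the statement. First I would identify the relevant pair of involutions. The complex conjugation $\eta(M)=\overline M$ is the split conjugation on $\fg$, with real form $\fg_\bbR=\frakgl_n(\bbR)$; composing it with the compact conjugation $\delta(M)=-\overline M^{\,t}$ (with fixed locus the compact form $\fu_n$) gives the commuting pair, and the Cartan-corresponding holomorphic involution is $\theta=\eta\circ\delta$, $\theta(M)=-M^{t}$, so that $-\theta(M)=M^{t}$ has fixed locus the symmetric matrices $\fp=\fp_n(\bC)$ and associated symmetric space $\GL_n(\bC)/\Orth_n(\bC)$. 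Splitting off the (in both cases real) trace, it suffices to treat the traceless part $\frak{sl}_n$, where the locus $\fg'$ of elements with real $\on{ad}$-eigenvalues coincides with the locus of elements with real eigenvalues; there $\frakgl_n'(\bbR)$ and $\fp_n'(\bC)$ restrict to the $\eta$- and $(-\theta)$-fixed loci on $\fg'$, the trace lines matching identically. On $\fg'$ the characteristic polynomial has real coefficients, so $\chi$ carries both loci into the common base $\{c\in\bbR^n\mid t^n+c_1t^{n-1}+\cdots+c_n\text{ has only real roots}\}\subset\bC^n$, with $\chi\circ\eta=\chi=\chi\circ(-\theta)$ there.

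Next I would invoke Theorem~\ref{intro: main 2}: using the hyper-K\"ahler/type-$A$-quiver presentation of $\frakgl_n$ (equivalently, of the fibres of $\chi$) from \cite{KP,KS,M,MV,Nak1}, which carries a twistor $\SU(2)$-action, one lifts $\eta$ to an antiholomorphic symmetry compatible with the hyper-K\"ahler data and rotates it along a path on the twistor sphere, producing the real-analytic family $\alpha_a\colon\frakgl_n'(\bC)\to\frakgl_n'(\bC)$, $a\in[0,1]$, with $\alpha_0=\eta$ and $\alpha_1=-\theta$; the point of restricting to the real-eigenvalue locus $\fg'$ is that there the spectral data is fixed by the whole twistor circle, which is precisely what makes the rotated map again an involution for the original complex structure, and it gives $\chi\circ\alpha_a=\chi$ throughout. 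Theorem~\ref{intro: main} then produces the stratified homeomorphism $\frakgl_n'(\bbR)=\on{Fix}(\alpha_0)\risom\on{Fix}(\alpha_1)=\fp_n'(\bC)$ compatible with $\chi$, for the stratification pulled back from the partition of $\bbR^n$ by the coincidence pattern of the roots, carrying $\GL_n(\bbR)$-orbits to $\Orth_n(\bC)$-orbits stratum by stratum.

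It then remains to extract the additional symmetries. The subgroup $\Orth_n(\bbR)=\GL_n(\bbR)\cap\Orth_n(\bC)$ commutes with both $\eta$ and $\theta$, hence with every $\alpha_a$ and with the twistor action, so the homeomorphism is $\Orth_n(\bbR)$-equivariant; the linear scaling $M\mapsto tM$, $t\in\bbR^\times$, likewise commutes with the whole construction (it commutes with the $\SU(2)$-action on the quiver presentation), giving $\bbR^\times$-equivariance. Finally, since the $\alpha_a$ and the twistor rotation are real-analytic and the construction is symmetric under $\eta\leftrightarrow-\theta$ (so that the inverse homeomorphism is produced by the same recipe with the reversed path), the restriction to a single orbit — a smooth manifold — is a real-analytic isomorphism onto the corresponding orbit, with real-analytic inverse. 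I expect the main obstacle to be the input of the middle step: producing a hyper-K\"ahler presentation of $\frakgl_n$ in which \emph{both} $\eta$ and $-\theta$ appear as twistor rotations of a single hyper-K\"ahler-compatible antiholomorphic involution, and verifying that the rotation descends to an honest involution of $\frakgl_n'(\bC)$ precisely over the real-eigenvalue locus — that is, checking that $(\frakgl_n,\eta,\theta)$ meets the hypotheses of Theorems~\ref{intro: main 2} and \ref{intro: main 4}. Once that is secured, the identifications of the first step and the equivariance and analyticity of the last are routine bookkeeping.
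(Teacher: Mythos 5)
Your high-level mechanism matches the paper's: realize $\frakgl_n$ via the type-$\mathrm A_n$ quiver hyper-K\"ahler quotient, rotate the split conjugation $\eta_Z$ by the twistor $\on{Sp}(1)$-action to produce the family $\alpha_a$ interpolating $\eta$ and $-\theta$ (Proposition \ref{family of involutions quiver}), and extract the homeomorphism by the stratified fixed-point Lemma \ref{fixed points} using the quiver norm function. But your route is organized backwards relative to the paper. The paper does not split off the trace and pass through $\frak{sl}_n$: Proposition \ref{MV} (Mirkovi\'c--Vybornov) identifies the quiver variety $\frak M'_{Z_\bC}$ with the full $\fg_n\times_{\fc_n}\ft_n$ in one step, and Theorems \ref{family of involutions for g_n'} and \ref{homeomorphism for g_n} prove the $\frakgl_n$ statement directly from there; the general classical case (Theorems \ref{homeomorphism for g} and \ref{family of involutions for g}, i.e.\ Theorems \ref{intro: main} and \ref{intro: main 2}) is then \emph{deduced from} the $\frakgl_n$ case by realizing $\frak{so}_n$, $\frak{sp}_n$ as $\sigma$-fixed loci inside $\frakgl_n$ via Lemma \ref{classification}. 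So invoking Theorems \ref{intro: main}--\ref{intro: main 2} to derive Theorem \ref{intro: gln} is, in the paper's logic, circular: you correctly identify the real work (the hyper-K\"ahler presentation in which $\eta$ and $-\theta$ become twistor-related), but you then defer exactly that step to the general theorems that depend on it. Two smaller imprecisions worth flagging: the twistor elements $q_s$ act on the base $Z$ by $-\id$, not $\id$ — it is the composition $\phi(q_s)\circ\eta_Z$ that preserves the spectral parameter, so the heuristic that ``the spectral data is fixed by the whole twistor circle'' is not the right picture; and the orbit-wise real analyticity is not a consequence of a symmetry $\eta\leftrightarrow-\theta$ of the construction but of integrating an averaged, stratum-wise real-analytic vector field in Lemma \ref{fixed points}.
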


\quash{

\begin{example}[n=2]

\begin{figure}[h!]
\centering
\hspace{2em}
\includegraphics[scale=0.5, trim={4.8cm 18.3cm 13.4cm 4.2cm},clip]{cone}
\hspace{8em}
\includegraphics[scale=0.45, trim={5cm 18.3cm 10cm 4cm},clip]{lines}
\begin{equation*}
\xymatrix{
\calN_2(\bbR) = \{ x^2 + y^2 = z^2\} \subset \bbR^3
&
 \calN^{\sym}_2(\bbC) =\{ uv = 0\} \subset \bbC^2
}
\end{equation*}
\caption{}\label{f: cone}
\end{figure}

When $n=2$, the real nilpotent cone $\calN_2(\bbR)$ is a real two-dimensional cone; the symmetric nilpotent cone $\calN^\sym_2$ is a nodal pair of complex lines (in Figure~\ref{f: cone}, we are only able to draw its real points). The theorem provides an $\Orth_2(\bbR)$-equivariant  homeomorphism $\calN_2(\bbR) \simeq   \calN^{\sym}_2(\bbC)$.

\end{example}

}

We deduce Theorem~\ref{intro: gln}  from the following more fundamental structure of linear algebra.
Consider the subspace 
\[\frak{gl}_n'(\bC)=\{x\in \frak{gl}_n(\bC)| \text{\ eigenvalues of } x \text{ are real}\}.\]
Let $\chi':\frakgl_n'(\bC)\to\bC^n$ be the restriction of the adjoint quotient map
to $\frak{gl}_n'(\bC)$.

\begin{thm}\label{intro: inv}
There is a continuous one-parameter family of $\Orth_n(\bbR)\times\bbR^\times$-equivariant maps 
\begin{equation}
\xymatrix{
\alpha_a:\frak{gl}_n'(\bC)\ar[r] & \frak{gl}_n'(\bC) & a\in [0,1]
}
\end{equation}
satisfying the properties: 
\begin{enumerate}
\item $\alpha_a^2$ is the identity, for all $a\in [0,1]$. 
\item $\alpha_a$ commutes with 
$\chi':\frakgl_n'(\bC)\to\bC^n$.
\item $\alpha_a$ takes each $\GL_n(\bbC)$-orbit real analytically to a $\GL_n(\bbC)$-orbit, for all $a\in [0,1]$.
\item At $a=0$, we recover conjugation: $\alpha_0(A) = \bar A$.
\item At $a=1$, we recover transpose: $\alpha_1(A) = A^{t}$
\end{enumerate}
\end{thm}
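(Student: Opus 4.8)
The plan is to realize $\frakgl_n'(\bC)$ as (an open piece of) a hyper-Kähler quotient carrying an $\SU(2)$-action, and to build the family $\alpha_a$ from the composition of a fixed real structure with the $\SU(2)$-rotation that moves one complex structure to another. Concretely, one writes $\frakgl_n(\bC) = T^*\frakgl_n(\bC)^{\text{real}}$ — or better, uses the quiver/cotangent description in which a matrix $x$ together with the data giving its real eigenvalue structure is a point of a linear symplectic space $M$ with a triholomorphic $\SU(2)$ action and a moment map for a compact group $G$; the space $\frakgl_n'(\bC)$ appears as a locus in $M /\!\!/\!\!/ G$. On $M$ there is a quaternionic structure, so $\SU(2)$ acts, permuting the sphere of complex structures $I,J,K$; complex conjugation $\eta$ of $\frakgl_n(\bC)$ is an antiholomorphic involution coming from a real structure $\sigma$ on $M$ commuting with $G$, and the transpose $\theta$ is another such involution, differing from $\eta$ precisely by the antipodal-type element of $\SU(2)$ that interchanges the roles of $I$ and $-I$ while fixing the conjugation data. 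I would then define
\[
\alpha_a = \rho(g_a)\circ \eta \circ \rho(g_a)^{-1},
\]
where $a\mapsto g_a$ is a path in $\SU(2)$ from the identity to that distinguished element, and $\rho$ denotes the induced action on the quotient. Since $\eta$ is an involution and conjugation by a group element preserves that, property (1) is automatic; properties (4) and (5) hold by construction at the endpoints; continuity in $a$ is inherited from continuity of the $\SU(2)$-action.

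Next I would check the equivariance and compatibility statements. The $\Orth_n(\bbR)$-action is the residual symmetry $\GL_n(\bbR)\cap\Orth_n(\bbC)$, which commutes with both $\eta$ and the transpose and with the hyper-Kähler $\SU(2)$, hence with every $\alpha_a$; the $\bbR^\times$-scaling is the cone action on $\frakgl_n'(\bC)$, which commutes with $\SU(2)$ (it acts by the diagonal scaling on the quaternionic coordinates) and with $\eta$, giving property in the theorem's equivariance clause. Property (2) — commuting with $\chi'$ — follows because $\chi'$ records the eigenvalues, which are real on $\frakgl_n'(\bC)$; $\eta$ fixes real eigenvalues, the transpose preserves all eigenvalues, and the $\SU(2)$-rotation acts on the hyper-Kähler moment map in a way that, restricted to the locus where the two "imaginary" moment maps vanish (precisely the condition cutting out $\frakgl_n'$), does not change the "real" moment map that computes $\chi'$. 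This is the step where one must be careful: $\chi'$ is only $\SU(2)$-invariant on the sublocus with real eigenvalues, so one must verify that $\alpha_a$ preserves that sublocus — but it does, since $\rho(g_a)$ sends it to the analogous sublocus for a rotated complex structure and $\eta$ brings it back. Property (3), that each $\GL_n(\bC)$-orbit is carried real-analytically to a $\GL_n(\bC)$-orbit, follows because $\rho(g_a)$ is real-analytic and permutes the complexified group orbits (for the rotated complex structure), while $\eta$ is antiholomorphic and sends $\GL_n(\bC)$-orbits to $\GL_n(\bC)$-orbits; the composite is therefore a real-analytic bijection on orbits.

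The main obstacle I anticipate is not the formal manipulation of $\SU(2)$ but the identification of $\frakgl_n'(\bC)$ — the matrices with \emph{real} eigenvalues — with the correct locus in the hyper-Kähler quotient, and proving that this locus is exactly the one on which the two "imaginary" hyper-Kähler moment maps can be simultaneously killed by a choice of complex structure in the sphere. In the $n\times n$ matrix setting this should reduce to the statement that a matrix has real spectrum if and only if it lies in the image of the moment map at a point where a suitable rotation brings it to a form fixed by $\eta$; making this precise, and checking that the family $\alpha_a$ genuinely lands in $\frakgl_n'(\bC)$ for every intermediate $a$ (not just the endpoints), is the crux. Once that is in place, the deduction of Theorem~\ref{intro: inv} is the specialization of the general hyper-Kähler result (Theorem~\ref{intro: main 4}) to the quiver datum describing $\frakgl_n$, and everything else is bookkeeping of the three commuting symmetries $\Orth_n(\bbR)$, $\bbR^\times$, and $\chi'$.
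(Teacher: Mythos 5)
Your high-level strategy --- realizing the matrix space inside a hyper-K\"ahler quotient and producing the family $\alpha_a$ by combining a real structure with the hyper-K\"ahler $\on{Sp}(1)$-action --- is indeed the paper's strategy, but the proposal diverges from what works in one place and leaves the hardest step unaddressed.

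First, your formula. You set $\alpha_a = \rho(g_a)\circ\eta\circ\rho(g_a)^{-1}$, a conjugate of $\eta$; the paper's family is a composition $\alpha_a = \phi(q_{a\pi/2})\circ\eta_Z$ with $q_s = \cos(s)i+\sin(s)k\in\on{Sp}(1)$ and $\eta_Z$ the split conjugation on the family of quiver varieties. Proving $\alpha_a^2 = \id$ for the composition requires the commutation relation $\phi_s\circ\eta_Z = \phi(-1)\circ\eta_Z\circ\phi_s$ established in Proposition \ref{compatibility of maps}; your conjugation formula makes property (1) automatic, which is appealing, and in fact it can be rewritten as $\phi(u_a)\circ\eta_Z$ for a suitable path $u_a$ of unit imaginary quaternions orthogonal to $j$, so, modulo the sign $\phi(-1)$ (which the paper shows acts trivially on the relevant quotient), the two families agree up to reparametrization. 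But this works only for a path $g_a$ that is a rotation about the $j$-axis, and your verbal characterization of the endpoint as the element of $\SU(2)$ that ``interchanges the roles of $I$ and $-I$'' is incorrect: the element that actually does the job is $\tfrac{1+j}{\sqrt 2}$, which sends $i\mapsto -k$, $k\mapsto i$, $j\mapsto j$, and it does not exchange $I$ with $-I$. Without that computation your endpoint claims (4)--(5) are unverified, and a wrong choice of path (one not fixing the $j$-axis) would not even preserve the base $Z$ over which the family lives.

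Second, and more consequentially, you dismiss the descent to $\frakgl_n'(\bC)$ as bookkeeping, whereas it is the substantive step. The paper takes the specific $A_n$ quiver with $\mathbf v = (n, n-1, \dots, 1)$, $\mathbf w = (n, 0, \dots, 0)$, and uses the Mirkovic--Vybornov isomorphism (Proposition \ref{MV}) to identify the relevant family of quiver varieties not with $\frakgl_n'(\bC)$ but with $\fg_n\times_{\fc_n}\ft_{n,\bbR}$, i.e.\ matrices together with an \emph{ordered} list of real eigenvalues. To land on $\frakgl_n'(\bC) = \fg_n\times_{\fc_n}\fc_{n,\bbR}$ one must quotient by the symmetric group $\on{S}_n$ acting through Nakajima's reflection functors, and one must prove that the involutions $\alpha_a$ commute with this $\on{S}_n$-action (Lemma \ref{S_n action}), using the explicit formulas for reflection functors from \cite{Nak2}. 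Your proposal never mentions reflection functors, and this is a genuine gap: Theorem \ref{intro: main 4} alone only produces involutions on the quiver variety, so without the $\on{S}_n$-equivariance argument you never reach $\frakgl_n'(\bC)$ at all, and properties (2)--(3) --- compatibility with $\chi'$ and carrying $\GL_n(\bC)$-orbits to $\GL_n(\bC)$-orbits --- are exactly the statements certified by the MV isomorphism together with the $\on{S}_n$-descent.
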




\subsubsection{Real-symmetric homeomorphisms for Lie algebras}\label{intro: real-symmetric for Lie algebras}
To state a general version of our main results, we next recall some standard constructions in Lie theory, in particular
in the study of real reductive groups.

Let $G$ be a complex reductive Lie group with Lie algebra $\fg$.
Let $\fc=\fg//G$ be the categorical quotient
with respect to the adjoint action of $G$ on $\fg$. The adjoint quotient map
$\chi:\fg\to\fc$ is the \emph{Chevalley map}.

Let $G_\bbR\subset G$ be a real form, defined by a conjugation $\eta:G\to G$,
with Lie algebra $\fg_\bbR \subset \fg$. 
Choose a Cartan conjugation $\delta:G\to G$ that commutes with $\eta$, and let $G_c\subset G$ be the corresponding maximal compact subgroup.

 Introduce the Cartan involution $\theta = \delta \circ \eta:G\to G$, and let $K \subset G$ be the fixed subgroup of $\theta$ 
 with Lie algebra $\frakk \subset \fg$.
The subgroup $K$ is called the symmetric subgroup.
We have the Cartan decomposition 
 $\fg=\frakk \oplus \fp$ where 
$\fp\subset \fg$ is the $-1$-eigenspace of $\theta$.
Let $\fa\subset\fp$ be a maximal abelian subspace contained in $\fp$
and let $\ft\subset\fg$ be a $\theta$-stable Cartan subalgebra
containing $\fa$. 
Let $\rW_G=N_G(\ft)/Z_G(\ft)$ be the Weyl group 
of $G$
and $\rW=N_K(\fa)/Z_K(\fa)$ be the little Weyl group of the symmetric pair 
pair $(G,K)$.
We denote by $\fp_\bbR=\fp\cap\fg_\bbR$, $\frak k_\bbR=\frak k\cap\fg_\bbR$, 
$\fa_\bbR=\fa\cap\fg_\bbR$, etc.

One can organize the above groups into the diagram:
\beq\label{eq:diamond}
\xymatrix{&G&\\
K\ar[ru]\ar[ru]&G_c \ar[u]&G_\bbR\ar[lu]\\
&K_\bbR\ar[lu]\ar[u]\ar[ru]&}
\eeq
Here 
$K_\bbR$ is the fixed subgroup of $\theta, \delta$, and $\eta$ together (or any two of the three) and the maximal compact subgroup of $G_\mathbb R$ with complexification $K$.

Let $\fg_\bbR'\subset\fg_\bbR$ (resp. $\fp'\subset\fp$) be the subspace consisting of 
elements $x\in\fg_\bbR$ (resp. $x\in\fp$) such that the eigenvalues of the adjoint map $\ad_x:\fg\to\fg$ are real. 
The real from $G_\bbR$ and the symmetric subgroup $K$ act naturally on
$\fg_\bbR'$ and $\fp'$ by the adjoint action. The 
compact subgroup $K_\bbR=G_\bbR\cap K$ and $\bbR^\times$ act both on 
$\fg_\bbR'$ and $\fp'$.

\begin{thm}[Theorem \ref{homeomorphism for g}] \label{intro: main} 
Suppose $\fg$ is of classical type. 
There is a $K_\bbR\times\bbR^\times$-equivariant homeomorphism
\begin{equation}\label{eq: intro homeo}
\xymatrix{
\fg_\bbR' \ar[r]^\sim& \fp'
}
\end{equation}
which is compatible with the adjoint quotient map.
Furthermore, it restricts to a real analytic isomorphism between individual $G_\bbR$-orbits and $K$-orbits.
\end{thm}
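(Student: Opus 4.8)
The plan is to deduce Theorem~\ref{intro: main} from the one-parameter family of involutions $\alpha_a$ on $\fg'$ interpolating $\eta$ and $-\theta$ (Theorem~\ref{intro: main 2}), exactly as Theorem~\ref{intro: gln} is deduced from Theorem~\ref{intro: inv} in the $\frakgl_n$ case. The first step is to observe that the fixed-point sets $(\fg')^{\alpha_a}$ form a continuous family of closed subspaces of $\fg'$, each equipped with the restriction of the Chevalley map $\chi':\fg'\to\fc'$, the residual $\bbR^\times$-action, and the action of $K_\bbR$ (which commutes with every $\alpha_a$ since $K_\bbR$ is fixed by $\eta,\delta,\theta$ simultaneously, cf.~\eqref{eq:diamond}). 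At the endpoints we have $(\fg')^{\alpha_0}=\fg_\bbR'$ and $(\fg')^{\alpha_1}=\fp'$. The goal is to produce a homeomorphism between these two endpoints compatible with $\chi'$, $\bbR^\times$, $K_\bbR$, and sending $G_\bbR$-orbits to $K$-orbits real-analytically.

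The second step is a fiberwise analysis over $\fc'$. For a fixed $c\in\fc'$, the fiber $(\chi')^{-1}(c)$ is a single $G$-orbit when $c$ is regular semisimple, and in general a finite union of $G$-orbits (the fibers of the Chevalley map restricted to elements with real eigenvalues), preserved by each $\alpha_a$ by property~(2). Since each $\alpha_a$ takes $G$-orbits to $G$-orbits real-analytically (property (3)), on each fiber $\alpha_a$ permutes the finitely many orbits, and by continuity in $a$ this permutation is constant; combined with $\alpha_0=\eta$ and $\alpha_1=-\theta$ one identifies which $G$-orbit $\mO$ has $\mO^{\alpha_0}$ a $G_\bbR$-orbit and $\mO^{\alpha_1}$ a $K$-orbit — this is precisely the (generalized) Kostant--Sekiguchi-type matching. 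One then builds the homeomorphism orbit by orbit: on a $G$-orbit $\mO$ with $\alpha_a(\mO)=\mO$, the family $\alpha_a|_\mO$ is a path of real-analytic involutions of $\mO$, and a standard averaging/Moser-type argument (or the $\on{SU}(2)$-equivariance coming from the hyper-Kähler picture) produces a $K_\bbR\times\bbR^\times$-equivariant real-analytic isomorphism $\mO^{\alpha_0}\risom\mO^{\alpha_1}$; assembling these over all orbits in all fibers gives the map~\eqref{eq: intro homeo}, and one checks continuity across strata using the continuity of the whole family $\alpha_a$ on $\fg'$ (not just orbit-by-orbit).

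The third step is to verify the claimed compatibilities: commutation with $\chi'$ is immediate from property~(2); $\bbR^\times$- and $K_\bbR$-equivariance is inherited from the equivariance of each $\alpha_a$; the orbit correspondence $G_\bbR\text{-orbits}\leftrightarrow K\text{-orbits}$ and its real-analyticity come from the orbitwise construction; and the homeomorphism property (continuity and continuity of the inverse across the stratification by $\chi'$-fibers and orbit type) follows from a gluing argument using that the strata of $\fg_\bbR'$ and $\fp'$ match up under the construction. The main obstacle is the passage from the \emph{pointwise} statement that $\alpha_a$ is a nice involution on each orbit to a \emph{global, stratified} homeomorphism: one must show the orbitwise isomorphisms fit together continuously at the boundaries between strata (where orbit dimension jumps), i.e. that the construction is sufficiently canonical — this is where the hyper-Kähler $\on{SU}(2)$-action and Theorem~\ref{intro: main 4} on involutions of hyper-Kähler quotients are essential, since they provide the family $\alpha_a$ in a form that is manifestly well-behaved across the quotient, and the classical-type hypothesis enters precisely to guarantee the quiver-variety model exists. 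Finally one reduces the general classical-type case to the building blocks ($\frakgl_n$, and the $\frako_n$/$\frak{sp}_{2n}$ symmetric pairs) by compatibility of all the structures with the relevant folding/restriction, much as Theorem~\ref{intro: gln} is the $\frakgl_n$ instance.
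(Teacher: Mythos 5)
Your proposal and the paper agree on the high-level plan (deduce Theorem~\ref{intro: main} from the family $\alpha_a$ of Theorem~\ref{intro: main 2}, by comparing the fixed-point sets of $\alpha_0=\eta$ and $\alpha_1=-\theta$), but the mechanism you propose for producing the homeomorphism is different and has a genuine gap, one you yourself flag but do not close.

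You construct orbit-by-orbit: restrict $\alpha_a$ to each $G$-orbit $\mO$ preserved by the family, run a Moser/averaging argument there to get a real-analytic isomorphism $\mO^{\alpha_0}\risom\mO^{\alpha_1}$, and then try to glue. The gluing is exactly the problem. A Moser-type trivialization on a single orbit $\mO$ depends on choices (a metric, a connection, a vector field), and nothing in the orbitwise construction forces these choices to extend continuously to the orbit-closure boundary where strata of lower dimension sit. Your appeal to ``the hyper-Kähler $\on{SU}(2)$-action and Theorem~\ref{intro: main 4}'' for this step is not a proof; knowing that the whole family $\alpha_a$ is globally continuous on $\fg'$ does not by itself make independently-chosen orbitwise trivializations match up at stratum boundaries.

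The paper's proof sidesteps gluing entirely. It never constructs the homeomorphism orbit-by-orbit. Instead it builds a \emph{single global} trivialization via Lemma~\ref{fixed points}: one has a $K_\bbR$-invariant, $\alpha_a$-invariant, $\bbR_{>0}$-homogeneous proper norm function $\lvert\lvert-\lvert\lvert_{\fg'}$ on $\fg'$ (pulled back from the $\on{U}(n)$-invariant norm on the quiver variety and averaged over $\on{S}_n$), restricts to the compact unit sphere $C=\lvert\lvert-\lvert\lvert^{-1}(1)$, considers the involution $\alpha(a,m)=(a,\alpha_a(m))$ on $[0,1]\times C$, averages the horizontal vector field $\partial_a$ under this $\bZ/2\bZ$-action, and integrates the averaged field to trivialize the fixed-point locus $([0,1]\times C)^\alpha$ over $[0,1]$. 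Coning off then gives the global $K_\bbR\times\bbR_{>0}$-equivariant stratified homeomorphism $\fg_\bbR'\risom\fp'$. Because the averaged vector field is constructed once, globally, and is tangent to the strata, it automatically produces a stratum-preserving flow; the real-analytic orbit-to-orbit isomorphisms then fall out as restrictions (each stratum is a finite union of orbits and $K_\bbR$ acts transitively on components), rather than being the inputs to be glued. Also worth noting: the stratification used is the Lusztig stratification of $\fg$, not the orbit-type-over-$\fc'$ stratification you describe, and the norm function (not just the $\on{SU}(2)$-symmetry in the abstract) is the precise ingredient from the quiver picture that makes the construction go.

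So: your Steps~1 and~3 are consistent with the paper, your Step~2 describes the correct target picture (orbits match up Kostant--Sekiguchi style), but your proposed construction of the homeomorphism is the wrong way around. You should construct the global flow first, using a norm function and the compactness of the unit sphere to make the averaged $\partial_a$-field integrable, and only then read off the orbitwise isomorphisms. Trying to assemble the homeomorphism from orbitwise pieces is where the argument, as written, does not close.
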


\quash{
\begin{remark}
Our methods also provide interesting but incomplete results for the exceptional types $E_6, E_7, E_8, F_4, G_2$.
Namely, we obtain Kostant-Sekiguchi homeomorphisms for the 
minimal nilpotent orbits and the 
closed subcone $\calN_\sm \subset \calN $
of ``Reeder pieces" in the terminology of \cite{AH}. We conjecture it is possible to extend these homeomorphisms to all of $\calN$. 
See Remark~\ref{rem: exc}. 
\end{remark}
}

We deduce Theorem~\ref{intro: main}  from the following.
Let $\fc_{\fp,\bbR}\subset\fc$ be the image of the natural map
$\fa_\bbR\to\fc=\ft//\rW_G$.
Introduce $\fg'=\fg\times_{\fc}\fc_{\fp,\bbR}$ and let 
$\chi':\fg'\to\fc_{\fp,\bbR}$ be the projection map.

\begin{thm}[Theorem \ref{family of involutions for g}]\label{intro: main 2}
Under the same assumption as Theorem~\ref{intro: main},
there is a continuous one-parameter family of $K_\bbR\times\bbR^\times$-equivariant maps 
\begin{equation}\label{eq: intro family of invs}
\xymatrix{
\alpha_a:\fg'\ar[r] &  \fg' & a\in [0,1]
}
\end{equation}
satisfying the properties: 
\begin{enumerate}
\item $\alpha_a^2$ is the identity, for all $s\in [0,1]$. 
\item
$\alpha_a$ commutes with 
$\chi':\fg'\to\fc_{\fp,\bbR}$.
\item $\alpha_a$ takes each $G$-orbit real analytically to a $G$-orbit, for all $a\in [0,1]$.
\item At $a=0$, we recover the conjugation: $\alpha_0 = \eta$.
\item At $a=1$, we recover the anti-symmetry: $\alpha_1 = - \theta$.
\end{enumerate}
\end{thm}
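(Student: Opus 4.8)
\textbf{Proof proposal for Theorem~\ref{intro: main 2}.}
The plan is to realize $\fg'$ as (an open piece of) a hyper-K\"ahler quotient of a linear space carrying an $\on{SU}(2)$-action, and then to apply the general result on involutions of such quotients (Theorem~\ref{intro: main 4}) to the distinguished circle in $\on{SU}(2)$ that rotates the two complex structures attached to $\eta$ and $-\theta$. Concretely, for $\fg$ of classical type one uses the Kraft--Procesi / Nakajima quiver descriptions of $\frakgl_n$, $\fso_n$, $\fsp_{2n}$ (and more generally the adjoint representation of a classical $G$) as moment-map level sets: the relevant linear datum $M$ is a representation of the appropriate doubled quiver, the hyper-K\"ahler metric restricts from the flat one, and the three moment maps $\mu_\bbR,\mu_\bbC$ cut out $\fg$ together with its eigenvalue data. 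The key point is that the $\on{SU}(2)$ of hyper-K\"ahler rotations descends to the quotient, and the antiholomorphic involution given by the real structure (entrywise complex conjugation, i.e. $\eta$) together with the quaternionic structure combine to give, for each $a\in[0,1]$, an involution $\alpha_a$: at $a=0$ it is $\eta$ and at $a=1$ it is the composition of $\eta$ with the hyper-K\"ahler rotation by $\pi$, which on $\fg$ is exactly $-\theta$.

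The steps, in order, would be: (1) recall from Section~\ref{...} (the quiver-variety sections) the explicit linear space $\frakM$, group $\frakG$ acting, and moment maps, and the identification of $\fg$ (and of $\fg'$, after restricting the complex moment-map value to $\fc_{\fp,\bbR}$) with the corresponding hyper-K\"ahler quotient slice; (2) identify the conjugation $\eta$ on $\fg$ with the involution induced by a real (or quaternionic, depending on type) structure $\sigma$ on $\frakM$ that is compatible with the $\frakG$-action and anticommutes/commutes appropriately with the $\on{SU}(2)$-action, so that Theorem~\ref{intro: main 4} applies; (3) feed $\sigma$ and the rotating circle into Theorem~\ref{intro: main 4} to produce the family $\alpha_a$ on the quotient, automatically $K_\bbR\times\bbR^\times$-equivariant because $K_\bbR$ and the scaling action arise from residual symmetries commuting with the construction; (4) check the five listed properties: involutivity is built in, commuting with $\chi'$ holds because the $\on{SU}(2)$-rotation acts on the base $\fc_{\fp,\bbR}$ compatibly (and trivially on the part of the characteristic polynomial that matters, since eigenvalues are real), the $G$-orbit statement follows from $\frakG$-equivariance descending to $G$-equivariance plus real-analyticity of the hyper-K\"ahler rotation, and the endpoint identifications $\alpha_0=\eta$, $\alpha_1=-\theta$ are the defining features; (5) finally verify the normalization matching $-\theta$ rather than $+\theta$, i.e. that the half-rotation in $\on{SU}(2)$ composed with $\eta$ lands on the correct sign, using the classification bijection~\eqref{bijection} and the explicit model (for $\frakgl_n$ this is the computation $\alpha_1(A)=A^t$ from Theorem~\ref{intro: inv}).

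The main obstacle I expect is step~(2): exhibiting, uniformly across the classical types, a real/quaternionic structure $\sigma$ on the linear quiver datum $\frakM$ that simultaneously (a) induces exactly the split conjugation $\eta$ on $\fg$, (b) intertwines correctly with the $\on{SU}(2)$-action so that the hypotheses of Theorem~\ref{intro: main 4} are met, and (c) descends past the symplectic/orthogonal reduction without introducing a discrete ambiguity (e.g. a sign or an outer automorphism). In particular the orthogonal and symplectic cases require care because $\fso_n$ and $\fsp_{2n}$ sit inside $\frakgl$ as fixed points of a bilinear-form involution, and one must check this commutes with everything in sight; the need to restrict to $\fg'=\fg\times_\fc\fc_{\fp,\bbR}$ (rather than all of $\fg$) is precisely what makes the rotation well defined on the base, and spelling out $\fc_{\fp,\bbR}$ and the $\on{SU}(2)$-action on it is the other delicate bookkeeping point. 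A secondary, more technical obstacle is ensuring continuity in $a$ of $\alpha_a$ up to and including the endpoints, which should follow from continuity of the $\on{SU}(2)$-action and properness of the relevant slices, but must be stated carefully since the hyper-K\"ahler quotient is only a stratified space here.
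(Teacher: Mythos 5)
Your high-level strategy is the right one (realize the relevant space as a hyper-K\"ahler quotient and compose the split conjugation with the rotating circle in $\on{Sp}(1)$, appealing to Theorem~\ref{intro: main 4}), and you correctly flag the orthogonal/symplectic cases as the main danger spot. But there are two genuine gaps relative to what actually has to be done. First, the type-$A$ quiver variety $\frak M'_{Z_\bC}$ does not produce $\fg_n'$ directly: by Mirkovi\'c--Vybornov it produces $\fg_n\times_{\fc_n}\ft_n$, i.e.\ the family over the \emph{Cartan}, not over the adjoint quotient. To get the involutions $\alpha_{n,a}$ onto $\fg_n'=\fg_n\times_{\fc_n}\fc_{n,\bbR}$ one must descend along the $\on{S}_n$-quotient, and for that one needs to know that $\alpha_{n,a}$ (and $\beta_n$) commute with the symmetric-group action on $\fg_n\times_{\fc_n}\ft_{n,\bbR}$. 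The paper establishes this using Nakajima's reflection functors $S_k$, showing they act as $s_k$ on the base while leaving $(x,y)$ and hence $yx$ unchanged, so that $\phi_{n,\bC}$ is $\on{S}_n$-equivariant. Nothing in your proposal addresses this descent; without it the family of involutions only lives over $\ft_{n,\bbR}$, not over $\fc_{n,\bbR}$.

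Second, the paper does \emph{not} invoke separate quiver models for $\fso_n$ or $\fsp_{2n}$. It only uses the type-$A$ model for $\frakgl_n$, then treats each classical $\fg$ as the fixed locus of an involution $\sigma$ on $\frakgl_n$ and defines $\alpha_a$ on $\fg'$ as the restriction of $\alpha_{n,a}\circ\beta_n\circ\theta$, verifying that this preserves $\fg'=(\fg_n')^\sigma\times_{\fc_{n,\bbR}}\iota_\fc(\fc_{\fp,\bbR})$ by checking commutation with $\sigma$ and with the conjugating matrices $S_m$, $I_{p,n-p}$, $K_{p,m-p}$ (all of which lie in $\on{O}_n(\bbR)$, hence commute with $\alpha_{n,a}$ by $\on{O}_n(\bbR)$-equivariance). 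This embedding route is what you call the ``obstacle,'' but in the paper it is the argument, and it has its own delicate point you do not mention: for $\fg=\fso_{2m}$ the map $\iota_\fc:\fc\to\fc_n$ fails to be a closed embedding because of the Pfaffian, so commutation with $\chi'$ (property~(2)) has to be checked separately using the extra data that $\alpha_0|_{\fa_\bbR}=\id$. Your alternate plan of building quiver models for $\fso$ and $\fsp$ directly (Kobak--Swan style) might be made to work, but you would then need type-by-type analogues of the Mirkovi\'c--Vybornov identification and the reflection-functor equivariance, none of which you spell out, so as written this is a gap rather than a genuinely different complete route.
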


\begin{remark}
The special case of Theorems~\ref{intro: main} and~\ref{intro: main 2} stated in Theorems~\ref{intro: gln} and \ref{intro: inv} is when $G=\GL_n(\bbC)$, $\fg \simeq\frakgl_n(\bbC)$, $G_\bbR= \GL_n(\bbR)$, $K= \Orth_n(\bbC)$, and $K_\bbR = \Orth_n(\bbR)$.
 \end{remark}

\quash{
\begin{remark}
To deduce Theorem~\ref{intro: main} from Theorem~\ref{intro: main 2}, we consider the product $\fg' \times [0,1]$ with the horizontal vector field 
$\partial_s$ where $s$ is the coordinate on $[0,1]$. By averaging $\partial_s$ with respect to the $\bbZ/2\bbZ$-action given by $\alpha_s$, we obtain a new horizontal vector field on $\fg' \times [0,1]$. Its integral gives a new trivialization of $\fg' \times [0,1]$ taking $\fg'_\bbR \times\{0\}$ to $\fp'\times\{1\}$.  
\end{remark}
}

\subsubsection{Involutions on hyper-K\"ahler quotients
}
We deduce Theorem \ref{intro: main} and Theorem \ref{intro: main 2}
from a general result about 
involutions on
hyper-K$\dota$hler quotients of linear spaces.

Let $\mathbb H=\bbR\oplus\bbR i\oplus\bbR j\oplus\bbR k$ be the 
quaternions and let $\on{Sp}(1)\subset\mathbb H$
be the group consisting of elements of norm one.
Let $\textbf M$ be a 
be a finite dimensional quaternionic representation of a compact Lie group $H_u$.
We assume that the quaternionic representation is unitary, that is,
there is 
a $H_u$-inner product $(,)$ on $\textbf M$ which is hermitian with respect to
the complex structures $I,J,K$ on $\textbf M$ given by 
multiplication by $i,j,k$ respectively.
We have the hyper-K$\dota$hler moment map 
\[\mu:\textbf M\to \on{Im}\mathbb H\otimes\frak h_u^*\] vanishing at the origin.
Using the isomorphism $\on{Im}\mathbb H=\bbR\oplus\bC$ sending 
$x_1i+x_2j+x_3k$ to $(x_1,x_2+x_3i)$, we can identify
$\on{Im}\mathbb H\otimes\frak h_u^*=\frak h_u^*\oplus\frak h^*$ and 
hence obtain 
a decomposition of the moment map
\[\mu=\mu_\bbR\oplus\mu_\bC:\textbf M\to\frak h_u^*\oplus\frak h^*\]
of $\mu$ into real and complex components. 
We consider the hyper-K$\dota$hler quotient
\[\frak M_0=\mu^{-1}(0)/H_u\is\mu_\bC^{-1}(0)// H\]
where the right hand side is the categorial quotient
of $\mu_\bC^{-1}(0)$ by the complexification $H$ of $H_u$, and the second isomorphism 
follows from a result of Kempf-Ness \cite{KN}.

The hyper-K$\dota$hler quotient $\frak M_0$ has the following structures:
(1) it has a 
orbit type stratification 
\[\frak M_0=\bigsqcup_{(L)}\frak M_{0,(L)}\]
where a stratum $\frak M_{0,(L)}$ consists of orbits through 
points $x$ whose stabilizer in $H_u$ is conjugate to $L$,
(2) there is a hyper-K$\dota$hler 
$\on{SU}(2)=\on{Sp}(1)$-action on $\frak M_0$, denoted by $\phi(q):\frak M_0\to\frak M_0$, $q\in\on{Sp}(1)$, 
coming from the $\mathbb H$-module structure on $\textbf M$.

In Section \ref{inv on HK}, we prove the following general results
about involutions on  hyper-K$\dota$hler quotients.

\begin{thm}[Proposition \ref{family of involutions quiver} and Example \ref{norm function}]\label{intro: main 4}
\
\begin{enumerate}
\item
Let 
$\eta_H$ and $\eta_{\textbf M}$ be complex conjugations on 
$H$ and $\textbf M$ which are compatible with the unitary-quaternionic representation
of $H_u$ on $\textbf M$ (see Definition \ref{quaternionic} for the precise definition).
Then $\eta_H$ and $\eta_{\textbf M}$ induce
an anti-holomorphic involution 
\beq\label{intro: conjugation}
\eta:\frak M_0\lra\frak M_0
\eeq
such that 
the 
composition 
of $\eta$ with the 
hyper-K$\dota$hler $\on{SU}(2)$-action
of $q_a=\cos(\frac{a\pi}{2})i+\sin(\frac{a\pi}{2})k\in\on{Sp}(1)$
on $\frak M_0$, $a\in[0,1]$,
gives rise to a continuous family of involutions
\beq\label{intro: involutions}
\alpha_a:\frak M_0\lra\frak M_0\ \ \ \ a\in[0,1]
\eeq
interpolating the anti-holomorphic involution
$\alpha_0=\phi(i)\circ\eta$ and the holomorphic involution $\alpha_1=\phi(k)\circ\eta$.
\item
Let $\frak M_0(\bbR)$ and $\frak M_0^{sym}(\bC)$ be the fixed points 
of $\alpha_0$ and $\alpha_1$ on $\frak M_0$ respectively.
Then the intersection of the stratum $\frak M_{0,(L)}$ with 
$\frak M_0(\bbR)$ (resp. $\frak M_0^{sym}(\bC)$) define a stratification of 
$\frak M_0(\bbR)$ (resp. $\frak M_0^{sym}(\bC)$) and there exists a 
stratified homeomorphism
\beq\label{intro: homeo}
\xymatrix{\frak M_0(\bbR)\ar[r]^{\sim\ \ }&\frak M^{sym}_0(\bC)}
\eeq
which is real analytic on each stratum.  
\end{enumerate}
\end{thm}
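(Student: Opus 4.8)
The plan is to produce the conjugation $\eta$ on $\frak M_0$ first, then build the family $\alpha_a$ by composing with the hyper-K\"ahler $\on{Sp}(1)$-action, and finally extract the stratified homeomorphism via an averaging/flow argument on $\frak M_0\times[0,1]$. For part (1), I would start from the pair $(\eta_H,\eta_{\textbf M})$. Compatibility with the unitary-quaternionic structure (Definition \ref{quaternionic}) should be arranged so that $\eta_{\textbf M}$ is antilinear for one of the complex structures---say $I$---and intertwines the action of $H_u$ via $\eta_H$; the point is then that $\eta_{\textbf M}$ preserves $\mu_\bbR^{-1}(0)$ and, up to the appropriate sign/twist, the complex moment map equation $\mu_\bC^{-1}(0)$, so it descends through the Kempf--Ness identification $\mu^{-1}(0)/H_u\is\mu_\bC^{-1}(0)/\!/H$ to an anti-holomorphic involution $\eta$ on $\frak M_0$. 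I need to check $\eta^2=\id$ on the quotient (it holds on $\textbf M$ by assumption, hence on the quotient) and that $\eta$ respects the orbit-type stratification (it sends a point with stabilizer $L$ to one with stabilizer $\eta_H(L)$, which is conjugate to $L$ when $\eta_H$ preserves a maximal compact---so the stratification is preserved as a whole). Then set $\alpha_a=\phi(q_a)\circ\eta$ with $q_a=\cos(\tfrac{a\pi}{2})i+\sin(\tfrac{a\pi}{2})k$. To see $\alpha_a^2=\id$, the key identity is $\eta\circ\phi(q)=\phi(\bar q\,')\circ\eta$ for a suitable conjugate $q\mapsto q'$ of the quaternion---concretely, since $\eta$ is antilinear for $I$ it conjugates the $\on{Sp}(1)$-action by an automorphism fixing $i$ and negating $j,k$ (or some such), and one computes that $q_a$ is fixed by the relevant twisted conjugation precisely because $q_a$ lies in the $i$-$k$ plane and has the form $\cos\theta\, i+\sin\theta\, k$; then $\alpha_a^2=\phi(q_a)\eta\phi(q_a)\eta=\phi(q_a)\phi(q_a')\eta^2=\phi(q_a q_a')$, and the choice of $q_a$ makes $q_a q_a'=1$. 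Continuity in $a$ is clear since $a\mapsto q_a$ is continuous and $\phi$ is a group action; equivariance for $K_\bbR\times\bbR^\times$ (here the relevant compact group and scaling) follows because $\phi$ commutes with the $H_u$-residual symmetries and $\eta$ is equivariant by construction. The endpoint identifications $\alpha_0=\phi(i)\circ\eta$ and $\alpha_1=\phi(k)\circ\eta$ are immediate from $q_0=i$, $q_1=k$.

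For part (2), denote $\frak M_0(\bbR)=\frak M_0^{\alpha_0}$ and $\frak M_0^{sym}(\bC)=\frak M_0^{\alpha_1}$. Each $\alpha_a$ is an involution preserving the stratification (it is $\phi(q_a)$ composed with $\eta$, both stratification-preserving), so its fixed locus meets each stratum $\frak M_{0,(L)}$ in a closed subset, and these intersections partition $\frak M_0^{\alpha_a}$---this is the asserted stratification. On a fixed stratum $\frak M_{0,(L)}$, which is a (real analytic, in fact smooth with a hyper-K\"ahler structure) manifold on which $\on{Sp}(1)$ acts smoothly and $\eta$ acts real-analytically, the family $\{\alpha_a\}$ is a real-analytic family of involutions. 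The homeomorphism is then produced by the standard Moser-type trick sketched in the paper's own remark: on $\frak M_0\times[0,1]$ take the vector field $\partial_a$, average it over the $\bbZ/2$-action $(m,a)\mapsto(\alpha_a(m),a)$ to get a vector field $V$ which is tangent to the fixed-point sets fibrewise and whose time-one flow (from $a=0$ to $a=1$) therefore carries $\frak M_0^{\alpha_0}\times\{0\}$ to $\frak M_0^{\alpha_1}\times\{1\}$; restricting to a stratum this flow is real analytic, and globally it is a stratified homeomorphism because $V$ is continuous on $\frak M_0\times[0,1]$ and tangent to strata. One must be slightly careful that the averaged vector field is genuinely continuous across strata and that its flow is complete on $[0,1]$; completeness follows from the $\bbR^\times$-equivariance (the flow commutes with scaling, so it suffices to control it on a sphere, which is compact after quotienting appropriately) or more directly from properness of $\chi'$-fibres.

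The main obstacle I expect is part (2)'s passage from the fibrewise-smooth picture to a genuine \emph{stratified} homeomorphism of the singular spaces $\frak M_0^{\alpha_0}$ and $\frak M_0^{\alpha_1}$: one needs the averaged vector field $V$ on $\frak M_0\times[0,1]$ to be not just stratum-wise smooth but continuous on the whole space and tangent to every stratum, so that its flow integrates to a homeomorphism rather than merely a collection of unrelated diffeomorphisms of strata. This requires knowing that the hyper-K\"ahler quotient $\frak M_0$ admits an embedding into a smooth ambient manifold (or affine space) with respect to which the $\on{Sp}(1)$-action and $\eta$ extend smoothly---e.g. realizing $\frak M_0$ inside $\textbf M/\!/H$ or an invariant-theoretic embedding---and then averaging upstairs. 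Equivalently one can invoke a Whitney-stratified equivariant tubular neighbourhood/control data argument (à la Mather) to guarantee the flow is continuous and stratum-preserving. The remaining points---descent of $\eta$ through Kempf--Ness, the quaternionic bookkeeping for $\alpha_a^2=\id$, equivariance, and real-analyticity on each stratum---are routine once the setup in Definition \ref{quaternionic} is in hand.
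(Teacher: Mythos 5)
Your proposal follows the paper's approach in its essentials: descend the pair $(\eta_H,\eta_{\textbf M})$ to an anti-holomorphic involution $\eta$ on $\frak M_0$, compose with the hyper-K\"ahler $\on{Sp}(1)$-action via $q_a$, verify $\alpha_a^2=\id$ from the (anti-)commutation of $\eta$ with $I,J,K$ (this is Proposition \ref{compatibility of maps} in the paper: $\eta_{\textbf M}$ anti-commutes with $I$ and $K$ but commutes with $J$, so $\eta\circ\phi(q_a)=\phi(-q_a)\circ\eta$ and $\alpha_a^2=\phi(-q_a^2)=\id$), then average a horizontal vector field on $\frak M_0\times[0,1]$ to trivialize the fixed-point family.

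The one place your execution of part (2) is vaguer than it needs to be is the mechanism for completeness and for obtaining a genuine \emph{stratified} homeomorphism. You correctly flag the issue and correctly point to $\bbR^\times$-equivariance as the fix, but you then offer two alternatives (an embedding into a smooth ambient space, or Mather-style control data) that the paper does not use and that would require extra verification (e.g.\ a Whitney condition for the orbit-type stratification). What the paper actually uses, packaged as Lemma \ref{fixed points}, is much lighter: the hyper-K\"ahler inner product gives a proper function $\lvert\lvert m\lvert\lvert=(\tilde m,\tilde m)^{1/2}$ on $\frak M_0$ that is $\alpha_a$-invariant (by Definition \ref{quaternionic}(2) together with the fact that the $\on{Sp}(1)$-action is isometric) and homogeneous of degree one for the scaling action. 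One averages $\partial_a$ only on the compact link $C=\lvert\lvert-\lvert\lvert^{-1}(1)$ times $[0,1]$, where compactness of the whole space (not merely of strata) makes the flow complete, and then extends the resulting link homeomorphism to the full cone $\frak M_0\cong C(C)$ by functoriality of the cone. This is exactly the ``control on a sphere'' you gesture at, so the argument goes through, but you should see that the crucial input is the existence of this specific $\alpha_a$-invariant, degree-one norm rather than any embedding or control-data machinery.
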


\begin{remark}
Let $G, G_\bbR, G_u, K_\bbR$ be as in Section \ref{intro: real-symmetric for Lie algebras}.
Suppose that $\bfM$ is a unitary quaternionic representation of the larger 
group $H_u\times G_u$ and
the conjugations $\eta_H\times\eta_G$ and 
$\eta_{\bfM}$ on
$H\times G$ and $\bfM$
are compatible with the unitary-quaternionic representation.
Then the hyper-K$\dota$hler quotient $\frak M$ carries an action of 
$K_\bbR$ such that the involutions~\eqref{intro: conjugation},~\eqref{intro: involutions}, and homeomorphism~\eqref{intro: homeo} are 
$K_\bbR$-equivariant. 

\end{remark}


It is well-known that 
the complex nilpotent cone 
$\calN_n(\bC)\subset\frakgl_n(\bC)$ 
is an example of hyper-K$\dota$hler quotients
known as Nakajima's 
quiver varieties (see \cite{KP}, \cite{KS}, \cite{M},
\cite{Nak1}).
Applying Theorem \ref{intro: main 4} to this particular example, we obtain 
a family of $\on{O}_n(\bC)$-equivariant involutions 
\beq\label{intro: inv on nilcone}
\alpha_a:\calN_n(\bC)\lra\calN_n(\bC)\ \ \ \ a\in[0,1]
\eeq
interpolating the complex conjugation $\alpha_0(M)=\overline M$
and the transpose $\alpha_1(M)=M^t$, and a $\on{O}_n(\bC)$-equivariant
homeomorphism  
\beq\label{intro: homeo for nilcone}
\xymatrix{\calN_n(\bbR)\ar[r]^{\sim\ }&\calN_n^{sym}(\bC)}
\eeq
between real and symmetric nilpotent cone which restricts to a real analytic isomorphism between individual 
$\GL_n(\bbR)$-orbits and $\on{O}_n(\bC)$-orbits.
This establishes a special case of 
Theorems \ref{intro: main} and \ref{intro: main 2} for
the  
fiber of the adjoint quotient map 
$\chi':\frakgl'_n(\bC)\to\bC^n$ over $0\in\bC^n$, that is,
matrices with zero eigenvalues.
To extend the results 
to 
matrices with real eigenvalues, 
we prove a version 
of Theorem \ref{intro: main 4} for the family of 
hyper-K$\dota$hler quotients 
\[\frak M_{Z_\bC}=\mu^{-1}_\bbR(0)\cap\mu^{-1}_\bbC(Z_\bC)/H_u\to Z_\bC\]
where $Z_\bC\subset\frak h^*$ is the dual of the center of $\frak h$ and 
then 
deduce the results using the description of 
general adjoint orbits closures as quiver varieties 
in \cite{MV}.
Finally, we check that the constructions are compatible
with inner automorphisms and Cartan involutions and then 
deduce the case of Lie algebras of classical types from the case of $\frakgl_n(\bC)$.

We would like to emphasize that the keys 
in the proof of Theorems \ref{intro: main} and \ref{intro: main 2}
are 
the symmetries on adjoint orbit closures (or rather, the symmetries on the whole family 
$\frakgl_n'(\bC)\to\bC^n$)
coming from the 
hyper-K$\dota$hler $\on{SU}(2)$-action.
Those symmetries are not immediately visible in their original definitions
as algebraic varieties.

\begin{remark}
The use of hyper-K$\dota$hler $\on{SU}(2)$-actions in the study of 
geometry of 
nilpotent orbits goes back to the 
celebrated work of Kronheimer \cite{Kr} where 
he used those symmetries to 
gave a differential-geometric interpretation of 
Brieskorn's theorem on sub-regular singularities.

\end{remark}

\subsection{Applications }
We discuss here applications to the Kostant-Sekiguchi correspondence,
singularities of real and symmetric 
adjoint orbit closures,
and Springer theory for symmetric spaces.

In the rest of the section, we assume 
$\fg$ is of classical type.

\subsubsection{Generalized Kostant-Sekiguchi homeomorphisms}
The celebrated Kostant-Sekiguchi correspondence is an isomorphism 
between
real and symmetric nilpotent orbit 
posets
\beq
|G_\bbR\bs\calN_{\bbR}|\longleftrightarrow|K\bs\calN_{\fp}|.
\eeq
The bijection was proved by Kostant (unpublished) and Sekiguchi~\cite{S}. Vergne~\cite{V}, using
Kronheimer's instanton flow~\cite{Kr},  showed the corresponding orbits are diffeomorphic. 
Schmid-Vilonen ~\cite{SV} gave an alternative proof and further refinements using Ness'  moment map.
Barbasch-Sepanski~\cite{BaSe} deduced the bijection is a poset isomorphism  from Vergne's results.

We shall state a lift/generalization of the Kostant-Sekiguchi correspondence to stratified homeomorphisms between adjoint orbits closures in the real Lie algebra $\fg_\bbR$ and symmetric
subspace $\fp$ whose eigenvalues are real but not necessarily zero.

Denote by 
$\calN_\xi=\chi^{-1}(\xi)$ the fiber of the Chevalley map 
$\chi:\fg\to\fc$ over 
$\xi\in\fc$.
In \cite{Ko1}, Kostant proved that 
there are finitely many $G$-orbits in $\calN_\xi$ and there is a 
unique closed orbit $\mO_\xi^{\fraks}$ consisting of semisimple elements and 
a unique open orbit $\mO_{\xi}^{\frakr}$ consisting of regular elements.
Moreover, we have 
$\calN_\xi=\overline{\mO_{\xi}^{\frakr}}$.

Assume $\xi\in\fc_{\fp,\bbR}\subset\fc$. 
Then $\xi$ is fixed by the involutions on $\fc$
induced by 
$\eta$ and $-\theta$ and hence the fiber $\calN_\xi$ is stable under $\eta$ and $-\theta$.
We write 
\[\calN_{\xi,\bbR}=\calN_{\xi}\cap \fg_\bbR\ \ \ \ \ \calN_{\xi,\fp}=\calN_\xi\cap\fp.\]
for the fixed points.
There are finitely many 
$G_\bbR$-orbits and $K$-orbits on $\calN_{\xi,\bbR}$ and $\calN_{\xi,\fp}$
\[\calN_{\xi,\bbR}=\bigsqcup_l\mO_{\bbR,l}\ \ \ \ \calN_{\xi,\fp}=\bigsqcup_l\mO_{\fp,l}\]

\begin{corollary}\label{intro: main 3}
There is a $K_\bbR$-equivaraint stratified homeomorphism
\beq\label{generalized KS homeo}
\xymatrix{\calN_{\xi,\bbR}\ar[r]^{\sim}&\calN_{\xi,\fp}}
\eeq
which restricts to real analytic isomorphisms between individual $G_\bbR$-orbits and $K$-orbits.
The homeomorphism induces an isomorphism between $G_\bbR$-orbits and $K$-orbits posets
\beq\label{generalized KS cor}
|G_\bbR\bs\calN_{\xi,\bbR}|\longleftrightarrow|K\bs\calN_{\xi,\fp}|.
\eeq
\end{corollary}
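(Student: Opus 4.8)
The plan is to obtain \eqref{generalized KS homeo} as a direct consequence of Theorem \ref{intro: main} together with the fact that $\xi$ lies in the real locus $\fc_{\fp,\bbR}$ of the base. First I would observe that the fiber $\calN_\xi = \chi^{-1}(\xi) \subset \fg$ consists of elements whose $\ad$-eigenvalues coincide with those of a fixed semisimple element lying over $\xi$; since $\xi \in \fc_{\fp,\bbR}$ comes from $\fa_\bbR \subset \fp'$, those eigenvalues are real, so $\calN_\xi \subset \fg'$. Restricting the homeomorphism $\fg_\bbR' \risom \fp'$ of Theorem \ref{intro: main}, which by construction is compatible with the adjoint quotient map, to the fiber over $\xi$ then yields a homeomorphism $\calN_{\xi,\bbR} = \fg_\bbR' \cap \chi^{-1}(\xi) \risom \fp' \cap \chi^{-1}(\xi) = \calN_{\xi,\fp}$. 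The $K_\bbR$-equivariance is inherited, since $\xi$ is a $K_\bbR$-fixed point of $\fc$ (being fixed by the involutions induced by both $\eta$ and $-\theta$, hence by $\delta$), so $K_\bbR$ preserves the fiber.

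Next I would address the statement that this homeomorphism restricts to real analytic isomorphisms between individual $G_\bbR$-orbits and $K$-orbits, and that it is a stratified homeomorphism. The orbit-by-orbit real analyticity is again immediate from Theorem \ref{intro: main}: the homeomorphism $\fg_\bbR' \risom \fp'$ already has this property for $G_\bbR$-orbits in $\fg_\bbR'$ versus $K$-orbits in $\fp'$, and a $G_\bbR$-orbit contained in the fiber $\calN_{\xi,\bbR}$ is in particular a $G_\bbR$-orbit in $\fg_\bbR'$, likewise for $K$-orbits. For the stratified part I would invoke Kostant's structure theory of $\calN_\xi$ from \cite{Ko1}: there are finitely many $G$-orbits in $\calN_\xi$, nested between the unique closed semisimple orbit $\mO_\xi^{\fraks}$ and the unique open regular orbit $\mO_\xi^{\frakr}$, with $\calN_\xi = \overline{\mO_\xi^{\frakr}}$; intersecting these finitely many $G$-orbits with $\fg_\bbR$ and with $\fp$ gives the decompositions into $G_\bbR$-orbits and $K$-orbits, and these are the strata. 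Since the homeomorphism $\fg_\bbR' \risom \fp'$ is already stratified (it matches the orbit-type stratification on the hyper-Kähler side, which refines the decomposition by $G$-orbit type), the induced map on fibers carries strata to strata.

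Finally, the poset isomorphism \eqref{generalized KS cor} follows formally: a stratified homeomorphism carries the closure order on $G_\bbR$-orbits in $\calN_{\xi,\bbR}$ to the closure order on the corresponding $K$-orbits in $\calN_{\xi,\fp}$, since it is in particular a homeomorphism identifying the strata and therefore their closures. When $\xi = 0$ this recovers the classical Kostant--Sekiguchi bijection $|G_\bbR \bs \calN_\bbR| \longleftrightarrow |K \bs \calN_\fp|$, now upgraded to a stratified homeomorphism as in Corollary \ref{intro: main 3} of the introduction. I expect the only genuine subtlety — rather than an obstacle — to be bookkeeping: checking that the orbit-type stratification on $\fg_\bbR'$ (resp. $\fp'$) produced by Theorem \ref{intro: main}, when intersected with the single fiber $\calN_\xi$, agrees with the decomposition of $\calN_{\xi,\bbR}$ (resp. $\calN_{\xi,\fp}$) into $G_\bbR$-orbits (resp. $K$-orbits), i.e.\ that within a fixed fiber the $G_\bbR$-orbit type determines, and is determined by, the stratum; this uses Kostant's finiteness and the description of orbits in $\calN_\xi$, and is where I would spend the most care.
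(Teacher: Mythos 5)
Your proposal is correct and matches the paper's approach: the paper simply states that the corollary follows immediately from Theorem~\ref{intro: main}, and your argument is exactly the spelled-out version of why, namely restricting the homeomorphism $\fg_\bbR' \risom \fp'$ (which commutes with the adjoint quotient map) to the fiber over $\xi \in \fc_{\fp,\bbR}$ and inheriting the orbit-by-orbit real-analyticity, $K_\bbR$-equivariance, and stratification from the theorem. Your attention to verifying that the orbit-type stratification intersected with a fixed fiber agrees with the $G_\bbR$- and $K$-orbit decompositions is a reasonable bit of care, even though the paper treats it as tacit.
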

\begin{proof}
It follows immediately from Theorem \ref{intro: main}.
\end{proof}

\begin{remark}
Thanks to the work of Vergne~\cite{V},
it is known that under the Kostant-Sekiguchi bijection the correspondence orbits are 
diffeomorphic. It is an open question whether the corresponding orbit closures have
the same singularities (see, e.g., \cite[Introduction]{He}).  
Corollary \ref{intro: main 3} gives a positive answer in the case of classical Lie algebras.
\end{remark}

\begin{remark}
In \cite{Bie} and \cite{Biq}, the authors proved an extended Kostant-Sekiguchi correspondence for 
certain adjoint orbits. We expect that their correspondence is compatible with the one 
in~\eqref{generalized KS cor}.
\end{remark}

\begin{remark}
In Theorem \ref{KS for quivers},
we 
also establish a 
Kostant-Sekiguchi correspondence between real and symmetric symplectic leaves 
for quiver varieties.
\end{remark}

\subsubsection{Derived categories}

Let $D_{G_\bbR}(\calN_{\xi,\bbR})$, $D_K(\calN_{\xi,\fp})$ denote the respective equivariant derived categories of sheaves (over any commutative ring).
Since $K_\bbR \to G_\bbR$, $K_\bbR\to K$ are homotopy equivalences, the forgetful functors $D_{G_\bbR}(\calN_\bbR) \to D_{K_\bbR}(\calN_\bbR)$, $D_K(\calN_\fp)\to D_{K_\bbR}(\calN_\fp)$ to $K_\bbR$-equivariant complexes are fully faithful with essential image those complexes constructible along the respective orbits of $G_\bbR$ and $K$.

Transport along the homeomorphism of Theorem \ref{intro: main 3} immediately provides:

\begin{corollary}
Pushforward along the homeomorphism \eqref{generalized KS homeo} provides an equivalence of equivariant derived categories
\begin{equation}\label{eq: intro equiv}
\xymatrix{
D_{G_\bbR}(\calN_{\xi,\bbR}) \simeq D_K(\calN_{\xi,\fp})
}
\end{equation}
\end{corollary}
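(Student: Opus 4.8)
The final statement is the Corollary asserting that pushforward along the homeomorphism \eqref{generalized KS homeo} gives an equivalence $D_{G_\bbR}(\calN_{\xi,\bbR}) \simeq D_K(\calN_{\xi,\fp})$ of equivariant derived categories.

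The plan is to reduce everything to the bare topological content already secured by Corollary~\ref{intro: main 3}, together with the standard fact (recalled in the paragraph preceding the statement) that the forgetful functors $D_{G_\bbR}(\calN_{\xi,\bbR}) \to D_{K_\bbR}(\calN_{\xi,\bbR})$ and $D_K(\calN_{\xi,\fp}) \to D_{K_\bbR}(\calN_{\xi,\fp})$ are fully faithful with essential image the subcategory of complexes constructible with respect to the $G_\bbR$- (resp. $K$-) orbit stratification. So it suffices to produce an equivalence $D_{K_\bbR}(\calN_{\xi,\bbR}) \simeq D_{K_\bbR}(\calN_{\xi,\fp})$ carrying one constructibility condition to the other.

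First I would invoke Corollary~\ref{intro: main 3}: it gives a $K_\bbR$-equivariant homeomorphism $f\colon \calN_{\xi,\bbR} \risom \calN_{\xi,\fp}$. For any $K_\bbR$-equivariant homeomorphism of reasonable (e.g. locally compact, finite-dimensional, with finitely many orbit types) $K_\bbR$-spaces, the pushforward $f_*$ (equivalently $f_!$, since $f$ is a homeomorphism, and equivalently $(f^{-1})^*$) is an equivalence $D_{K_\bbR}(\calN_{\xi,\bbR}) \risom D_{K_\bbR}(\calN_{\xi,\fp})$ of equivariant derived categories, with inverse $(f^{-1})_* = f^*$; this is formal, as the equivariant derived category is a topological invariant of the $K_\bbR$-space (e.g. via the Borel construction $EK_\bbR \times_{K_\bbR} (-)$, to which $f$ applies functorially to yield a homeomorphism). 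Second, I would observe that $f$ is moreover a stratified homeomorphism from the $G_\bbR$-orbit stratification $\calN_{\xi,\bbR} = \bigsqcup_l \mO_{\bbR,l}$ to the $K$-orbit stratification $\calN_{\xi,\fp} = \bigsqcup_l \mO_{\fp,l}$ — this is exactly the content of Corollary~\ref{intro: main 3}. Hence $f_*$ carries complexes constructible along $\{\mO_{\bbR,l}\}$ to complexes constructible along $\{\mO_{\fp,l}\}$, and $f^*$ carries them back. Third, combining: the composite
\[
D_{G_\bbR}(\calN_{\xi,\bbR}) \hookrightarrow D_{K_\bbR}(\calN_{\xi,\bbR}) \xrightarrow{\ f_*\ } D_{K_\bbR}(\calN_{\xi,\fp}) \hookleftarrow D_K(\calN_{\xi,\fp})
\]
identifies the two fully faithful essential images (since $f_*$ matches the two constructibility conditions by the previous step), hence restricts to an equivalence $D_{G_\bbR}(\calN_{\xi,\bbR}) \simeq D_K(\calN_{\xi,\fp})$, which is the asserted equivalence; it is induced by pushforward along~\eqref{generalized KS homeo} by construction.

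The only genuine subtlety — and the one place I would be careful rather than terse — is the claim that a $K_\bbR$-equivariant homeomorphism induces an equivalence of equivariant derived categories in whatever model is being used. If $D_{K_\bbR}(-)$ denotes the equivariant derived category in the sense of Bernstein–Lunts, one should note that $\calN_{\xi,\bbR}$ and $\calN_{\xi,\fp}$ are real (semi)algebraic sets with $K_\bbR$-actions having finitely many orbits, so they admit $K_\bbR$-CW structures and the formalism applies; the homeomorphism $f$, being stratified with semialgebraic strata by Corollary~\ref{intro: main 3}, is compatible enough with these structures that $f_*$ is defined and is an equivalence. Everything else is formal bookkeeping with the recollement/open-closed decomposition along the strata, which I would not spell out. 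I expect no essential obstacle beyond stating the hypotheses under which "equivariant derived category is a homeomorphism invariant" holds in the chosen framework.
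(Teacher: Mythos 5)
Your proposal is correct and matches the paper's intended argument; the paper in fact gives no separate proof environment, merely the preceding paragraph setting up exactly the fully-faithful-forgetful-functor reduction to $D_{K_\bbR}(-)$ that you use, then declares the corollary immediate from the stratified homeomorphism of Corollary~\ref{intro: main 3}. Your added care about the Bernstein--Lunts formalism and $K_\bbR$-CW structures is sound and simply fills in what the authors left implicit.
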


\subsubsection{Vanishing of odd dimensional intersection cohomology}
Theorem \ref{intro: main 3} implies that the singularities of symmetric nilpotent orbit closures 
$\bar\mO_\fp \subset \calN_\fp$
are homeomorphic  to the singularities of the corresponding real nilpotent
orbit closures $\bar\mO_\bbR\subset \calN_\bbR$.
Thus we can deduce results about one from the other.

Here is a notable example.
Let $\on{IC}(\mO_\bbR,\mL_\bbR)$ be the intersection cohomology sheaf
of a real nilpotent orbit $\mO_\bbR \subset \calN_\bbR$ with coefficients in
a $G_\bbR$-equivariant local system $\mL_\bbR$. (Recall that 
all nilpotent orbits $\mO \subset \calN$ have even complex dimension, so all real nilpotent orbits $\mO_\bbR \subset \calN_\bbR$ have even real dimension, hence 
middle perversity makes sense.)

\begin{corollary}
The cohomology sheaves 
$\calH^i(\on{IC}(\mO_\bbR,\mL_\bbR))$ vanish for $i-\dim_\bbR \calO_\bbR /2$ odd.
\end{corollary}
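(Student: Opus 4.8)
The plan is to deduce this purely formally from the topological rigidity provided by Corollary~\ref{intro: main 3}, reducing the assertion about symmetric nilpotent orbits to the known parity-vanishing for real nilpotent orbits. First I would recall that Corollary~\ref{intro: main 3} (in the case $\xi=0$) gives a $K_\bbR$-equivariant stratified homeomorphism $\calN_\bbR \xrightarrow{\sim} \calN_\fp$ that carries the stratification by $G_\bbR$-orbits to the stratification by $K$-orbits and is real analytic on each stratum. In particular, for a symmetric nilpotent orbit $\mO_\fp\subset\calN_\fp$ corresponding to the real nilpotent orbit $\mO_\bbR\subset\calN_\bbR$, the homeomorphism restricts to a homeomorphism of orbit closures $\bar\mO_\bbR \xrightarrow{\sim} \bar\mO_\fp$ matching $\mO_\bbR$ with $\mO_\fp$ and their respective boundary strata, so $\dim_\bbR \mO_\bbR = \dim_\bbR\mO_\fp = \dim_\bbC \mO_\fp$ (both even). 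Since intersection cohomology sheaves with middle perversity depend only on the underlying stratified topological space together with the local system on the open stratum, pushforward along the homeomorphism identifies $\IC(\mO_\fp,\mL_\fp)$ with $\IC(\mO_\bbR, \mL_\bbR)$, where $\mL_\bbR$ is the local system transported from $\mL_\fp$; in particular the stalk cohomology sheaves $\calH^i(\IC(\mO_\fp,\mL_\fp))$ and $\calH^i(\IC(\mO_\bbR,\mL_\bbR))$ agree at corresponding points.

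It therefore suffices to prove the parity vanishing statement on the real side: $\calH^i(\IC(\mO_\bbR,\mL_\bbR))$ vanishes for $i - \dim_\bbR\mO_\bbR/2$ odd. For this I would invoke the structure of real nilpotent orbit closures in classical Lie algebras. The cleanest route is to appeal to the fact that each real nilpotent orbit closure $\bar\mO_\bbR$ admits a stratification whose strata are real forms of complex nilpotent orbit strata and whose normal slices are (real points of) complex nilpotent slices, for which the parity vanishing of $\IC$ stalks is classical (this is the nilpotent-cone analogue of the purity/parity statements underlying Lusztig's work; for the complex nilpotent cone, stalks of $\IC$ sheaves of orbit closures are concentrated in even degrees). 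Concretely, one shows by induction on the closure order that the transverse slice to a boundary orbit $\mO'_\bbR$ inside $\bar\mO_\bbR$ is homeomorphic, via Corollary~\ref{intro: main 3} applied locally (or via the $\on{SU}(2)$-symmetry refinement of Theorem~\ref{intro: main 4}), to a symmetric slice, which in turn is related to a complex nilpotent slice whose $\IC$ has even stalks; feeding this into the recursive formula for $\IC$ stalks gives the parity statement. The $\bbR^\times$-equivariance (conical structure) furnished by Theorems~\ref{intro: gln}–\ref{intro: main} lets one compute these stalks via the cohomology of the link, and contractibility to the cone point plus the even-dimensionality of strata forces the parity.

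The main obstacle is the input parity-vanishing on the real (or symmetric) side: unlike the complex nilpotent cone, where parity of $\IC$ stalks is a well-established consequence of the theory of character sheaves and the decomposition theorem for the Springer resolution, for real nilpotent orbits one does not have an algebraic resolution of singularities available in the same way, and the naive pushforward along a real Springer map need not be semisimple. The way I would circumvent this is exactly via the homeomorphism: transporting to $\calN_\fp$, which sits inside the \emph{complex} symmetric space $\fp$, one can use the existence of the symmetric Springer/Grothendieck resolution $\widetilde{\calN_\fp}\to\calN_\fp$ (a proper, semismall map from a smooth complex variety, cf.\ the work on Springer theory for symmetric spaces cited in the introduction) together with the decomposition theorem to conclude that the pushforward of the constant sheaf is a direct sum of shifted $\IC$'s of orbit closures with even shifts, whence each $\IC(\mO_\fp,\mL_\fp)$ occurring has stalks in the correct parity. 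Then transporting back through the homeomorphism of Corollary~\ref{intro: main 3} yields the stated vanishing for $\mO_\bbR$. A secondary technical point to check is that every $K$-equivariant local system $\mL_\fp$ that can appear actually arises as a summand in such a pushforward (equivalently, that the relevant $\IC$'s are ``clean'' or are direct summands of Springer-type sheaves); this is handled by the classification of the relevant local systems for classical symmetric pairs, which is part of the Springer theory referenced above.
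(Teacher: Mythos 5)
Your first step is the same as the paper's: push $\IC(\mO_\fp,\mL_\fp)$ across the stratified homeomorphism of Corollary~\ref{intro: main 3} (equivalently, invoke the equivalence~\eqref{eq: intro equiv}) to identify it with $\IC(\mO_\bbR,\mL_\bbR)$, converting the claim into parity vanishing of the $\IC$ stalks of a symmetric nilpotent orbit closure in degree $i-\dim_\bbC\mO_\fp$. At that point the paper simply cites Lusztig--Yun \cite[Theorem~14.10]{LY}, and the remark immediately afterwards stresses that the cited proof uses Deligne's theory of weights and the theory of canonical bases and ``does not have an evident generalization to a real algebraic setting.''

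Where your proposal diverges, it does not close. You want to deduce the symmetric-side parity from the decomposition theorem applied to a pushforward along a ``symmetric Springer/Grothendieck resolution.'' But the decomposition theorem only tells you that such a pushforward is a direct sum of shifted $\IC$ sheaves; it does \emph{not} say that the stalks of those $\IC$ summands are concentrated in even degrees, nor that the shifts are even. In the complex nilpotent cone, parity of $\IC$ stalks is extracted from additional purity/pointwise-purity inputs (weights, \'etale realization, Frobenius eigenvalues, or in the graded setting canonical bases) layered on top of semismallness; those inputs are exactly what \cite{LY} supplies and exactly what is missing from your sketch. There is also the coverage problem you acknowledge but defer: for a symmetric pair a $K$-equivariant Springer-type map need not surject onto $\calN_\fp$ and need not produce every $K$-equivariant local system $\mL_\fp$ as a summand of the pushforward (the cuspidal pieces are precisely what makes Springer theory for symmetric pairs delicate, and why the nearby-cycles formalism of \cite{G1}, \cite{CVX}, \cite{VX} was developed instead of relying on a naive resolution). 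Indeed the paper's own Corollary~\ref{Real Springer} shows the natural Springer-type object here (the real Springer sheaf, equivalently the symmetric nearby cycles sheaf $\mF_\fp$) fails to be semisimple in general, a clear signal that the decomposition-theorem package from the complex nilpotent cone does not transplant intact. The honest fix for your gap is to quote \cite[Theorem~14.10]{LY} for the symmetric parity vanishing, which is precisely the paper's proof.
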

\begin{proof}
Using the equivalence~\eqref{eq: intro equiv}, 
it suffices to prove the asserted vanishing for
 the intersection cohomology sheaf $\on{IC}(\mO_\fp,\mL_\fp)$ 
of a symmetric nilpotent orbit $\mO_\fp \subset \calN_\fp$ with coefficients in
a $K$-equivariant local system $\mL_\fp$, and
$i - \dim_\bbC \calO_\fp$ odd.
This is proved in 
\cite[Theorem 14.10]{LY}.\footnote{In fact, 
\cite{LY} establishes the odd vanishing in the 
more general setting of graded Lie algebras.}
 \end{proof}

\begin{remark}
The proof of \cite[Theorem 14.10]{LY} makes use of Deligne's theory of weights and the theory of canonical bases,
and hence does not have an evident generalization to a real algebraic setting.

\end{remark}

\subsubsection{Formula for the sheaf of symmetric nearby cycles}
Consider the quotient map
$\chi_\fp:\fp\to\fc_{\fp}=\fp//K$.
According to \cite{KR}, the generic fiber of 
$\chi_\fp$ is a single $K$-orbit through a 
semisimple element in $\fp$ and the special fiber 
over the base point 
$\chi_\fp(0)\in\fc_\fp$ is the symmetric nilpotent cone 
$\calN_\fp$.
Following Grinberg \cite{G1} (see also \cite{G2,GVX}), 
we consider the sheaf 
$\mF_\fp\in D_K(\calN_\fp)$ of nearby cycles along the special fiber
$\calN_\fp$ in the family $\chi_\fp:\fp\to\fc_{\fp}$ (see Section \ref{symmetric NC} for the precise definition).
We will call $\mF_\fp$ the \emph{sheaf of symmetric nearby cycles}.

Let $B_\bbR\subset G_\bbR$ be a minimal parabolic subgroup with Lie algebra
$\fb_\bbR=\frak m_\bbR+\fa_\bbR+\frak n_\bbR$ where 
$\frak m_\bbR=Z_{\frak k_\bbR}(\fa_\bbR)$ and $\frak n_\bbR$ is the nilpotent radical.
Consider the real Springer map 
\[\pi_\bbR:\widetilde\calN_\bbR\to\calN_\bbR\]
where $\widetilde\calN_\bbR=G_\bbR\times^{B_\bbR}\frak n_\bbR$
and $\pi_\bbR(g,v)=\on{Ad}_gv$.

We have the following formula for the sheaf of symmetric nearby cycles:

\begin{corollary}[Theorem \ref{S=R}]\label{formula for nearby cycles}
Under the equivalence 
$D_K(\calN_\fp)\is D_{G_\bbR}(\calN_\bbR)$~\eqref{eq: intro equiv}, the sheaf of 
symmetric nearby cycles $\mF_\fp$ becomes the real Springer sheaf 
$\mS_\bbR:=(\pi_\bbR)_!\bC[\dim_\bbR\calN_\bbR/2]$.
In particular,
the real Springer map 
$\pi_\bbR:\widetilde\calN_\bbR\to\calN_\bbR$ is a semi-small map and the 
real Springer sheaf $\calS_\bbR$
is a perverse sheaf.
\end{corollary}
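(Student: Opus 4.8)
The plan is to deduce Corollary~\ref{formula for nearby cycles} by comparing two nearby-cycles constructions through the homeomorphism of Corollary~\ref{intro: main 3}. First I would set up the real side carefully: let $\chi_\bbR:\fg_\bbR\to\fc_\bbR$ be the restriction of the Chevalley map, where $\fc_\bbR$ is the image of $\fa_\bbR$ (so that $\fc_{\fp,\bbR}$ and $\fc_\bbR$ are the same base, up to the natural identifications already in place). The generic fiber of $\chi_\bbR$ over a regular semisimple value is a single $G_\bbR$-orbit, and the special fiber over $\chi_\bbR(0)$ is the real nilpotent cone $\calN_\bbR$; one forms the real nearby cycles sheaf $\mF_\bbR\in D_{G_\bbR}(\calN_\bbR)$ along this degeneration, exactly parallel to Grinberg's $\mF_\fp$. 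The first key step is to observe that the homeomorphism~\eqref{eq: intro homeo} is \emph{compatible with the adjoint quotient maps} (this is part of Theorem~\ref{intro: main}), so it carries the family $\chi_\bbR:\fg_\bbR'\to\fc_{\fp,\bbR}$ to the family $\chi_\fp:\fp'\to\fc_{\fp,\bbR}$ — more precisely it matches the one-parameter degenerations into the respective nilpotent cones — and since nearby cycles are preserved by stratified homeomorphisms of families over the same base (the nearby-cycles functor only depends on the topology of the family near the special fiber), we get $\mF_\fp\simeq\mF_\bbR$ under the equivalence~\eqref{eq: intro equiv}. The care needed here is that the generic fiber on the real side is a $G_\bbR$-orbit which may be disconnected / may carry a nontrivial local system; one must check the constant sheaf is the right coefficient, i.e. that the relevant monodromy is trivial on the component through which we degenerate — this is handled by restricting attention to the positive chamber $\fa_\bbR^+$, along which the family is a product, as in the remark after Theorem~\ref{intro: main 2}.

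The second key step is to identify $\mF_\bbR$ with the real Springer sheaf $\mS_\bbR=(\pi_\bbR)_!\,\bC[\dim_\bbR\calN_\bbR/2]$. This is the real-groups analogue of the classical fact (Grinberg, Ginzburg) that the nearby cycles of the adjoint quotient $\fg\to\fc$ along the nilpotent cone is the Springer sheaf. I would argue it directly: over a regular value $\xi\in\fc_{\fp,\bbR}$ the fiber $\calN_{\xi,\bbR}$ is a single $G_\bbR$-orbit $\cong G_\bbR/Z_{G_\bbR}(x)$ with $x$ regular semisimple, and the real Grothendieck–Springer family $\widetilde\fg_\bbR=G_\bbR\times^{B_\bbR}(\fa_\bbR+\frak n_\bbR)\to\fg_\bbR$ is finite étale of degree $|\rW|$ over the regular semisimple locus of $\fg_\bbR'$ restricted to this chamber (the little Weyl group acting simply transitively on the choices of positive system / of the $\fa_\bbR$-part). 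Specializing this finite cover along $\xi\to 0$ and taking $!$-pushforward computes the nearby cycles as $(\pi_\bbR)_!\bC$ with the perverse shift; concretely one checks it fiberwise over each $G_\bbR$-orbit in $\calN_\bbR$ using the description $\widetilde\calN_\bbR=G_\bbR\times^{B_\bbR}\frak n_\bbR$ and the fact that the map $G_\bbR\times^{B_\bbR}(\fa_\bbR+\frak n_\bbR)\to G_\bbR\times^{B_\bbR}\frak n_\bbR$ obtained by forgetting the $\fa_\bbR$-coordinate realizes the specialization. Finally, semismallness of $\pi_\bbR$ and perversity of $\mS_\bbR$ follow formally: $\mS_\bbR\simeq\mF_\bbR\simeq\mF_\fp$ is a nearby-cycles sheaf, hence perverse (nearby cycles preserve perversity, after the standard shift), and a $!$-pushforward along a proper (here, semiprojective — $\pi_\bbR$ is proper onto its image, being $G_\bbR$-equivariant with proper fibers $G_\bbR/B_\bbR$) map that is perverse is exactly the statement that the map is semismall.

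The main obstacle I anticipate is the first step — making precise the claim that the real and symmetric \emph{families} (not just their special fibers) correspond under the homeomorphism in a way that intertwines nearby cycles. Theorem~\ref{intro: main} as stated gives a homeomorphism of the special fibers $\fg_\bbR'\simeq\fp'$ over $\fc_{\fp,\bbR}$, but the nearby-cycles construction needs the family over a \emph{disk transverse to} the nilpotent value, i.e. one needs the interpolating involutions $\alpha_a$ of Theorem~\ref{intro: main 2} to be compatible with the $\bbR^\times$-scaling and with a chosen ray $\fa_\bbR^+\subset\fc_{\fp,\bbR}$ through the origin, so that the homeomorphism extends to a homeomorphism of one-parameter degenerations. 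I expect this is exactly where the equivariance of $\alpha_a$ under $\bbR^\times$ (property carried through all of Theorems~\ref{intro: inv}--\ref{intro: main 2}) is used — the $\bbR^\times$-action lets one contract the base to the origin and reduces the family comparison to the fiber comparison already established. A secondary, more bookkeeping obstacle is checking that $\mF_\bbR$ really has \emph{constant} (untwisted) coefficients: on the real side the regular-semisimple fiber $G_\bbR/T_\bbR$ with $T_\bbR$ a maximally split real torus is disconnected, and one must verify that the monodromy of $\pi_0$ of the fibers along the chosen ray is trivial, which again comes down to working in a single Weyl chamber $\fa_\bbR^+$ where the family is topologically a product $\calN_\bbR\times\fa_\bbR^+$.
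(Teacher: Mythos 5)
Your overall strategy matches the paper's proof of Theorem~\ref{S=R}: the paper first proves as a standalone statement (Theorem~\ref{formula}) that the real nearby cycles sheaf $\mF_{\bbR,\chi}$ agrees with the real Springer sheaf $\calS_{\bbR,\chi}$, by base-changing the real Grothendieck--Springer family $\widetilde\fg_\bbR\to\fa_\bbR$ along the ray $\tilde\gamma_\bbR(s)=s\,a_\bbR$ and combining proper base change for nearby cycles with the Iwasawa-decomposition trivialization $\widetilde\fg'_\bbR\simeq\widetilde\calN_\bbR\times\fa_\bbR$ of Lemma~\ref{trivialization of real SR}; then Section~\ref{symmetric NC} transports this through the $\bbR_{>0}$-equivariant homeomorphism $\fg'_\bbR\simeq\fp'$ of Theorem~\ref{homeomorphism for g}, which identifies the two degenerations $\calZ_\bbR\simeq\calZ_\fp$ and therefore the nearby cycles functors, giving $\mF_\fp\simeq\mF_\bbR\simeq\calS_\bbR$. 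Your steps (1) and (2) are these two ingredients in reverse order, and your emphasis on $\bbR^\times$-equivariance and the choice of ray is precisely what makes the family comparison work.

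One precision to repair in your second step. Over the regular locus the map $\widetilde\fg'_\bbR\to\fg'_\bbR$ is indeed degree $|\rW|$, but if you specialize that \emph{full} cover as $\xi\to 0$, the generic fiber upstairs is $|\rW|$ copies of $\mO_{\xi_\bbR}$, and the nearby cycles of the constant sheaf on it produces $\calS_\bbR^{\oplus|\rW|}$ downstream, not $\calS_\bbR$; equivalently, proper base change through the full cover computes $\Psi_\bbR$ of a rank-$|\rW|$ sheaf rather than of $\bC_{\mO_{\xi_\bbR}}$. The paper instead base-changes $\widetilde\fg_\bbR$ along a ray in $\fa_\bbR$ itself rather than in $\fa_\bbR//\rW$; this selects a single sheet of the cover, so that the resulting $\tau_\bbR:\widetilde\calZ_\bbR\to\calZ_\bbR$ is an \emph{isomorphism} over $(0,1]$, and then $(\tau_\bbR)_!\bC\simeq\bC$ on the generic part and proper base change yields $\Psi_\bbR(\bC)\simeq(\pi_\bbR)_!\widetilde\Psi_\bbR(\bC)$ on the nose, with $\widetilde\Psi_\bbR(\bC)\simeq\bC$ from the product structure you describe via the $\fa_\bbR$-coordinate projection. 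So keep your ``forget the $\fa_\bbR$-coordinate'' picture but drop the degree-$|\rW|$ framing and work with the section of the cover instead. Also, the $\pi_0$-monodromy worry dissolves once you use a straight ray in $\fa_\bbR$ (contractible base), exactly as the paper does.
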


In fact, Theorem \ref{S=R} is slightly stronger than the one stated here.
We also prove a formula for the sheaf of symmetric nearby cycles with coefficients in 
$K$-equivariant local systems and we show that,
for 
 any $\fg_\bbR$ (not just for classical types),
the real Springer sheaf is isomorphic to the \emph{sheaf of
real nearby cycles} $\mF_\bbR$ 
introduced in Section \ref{Real NC}.

\begin{remark}
The formula above for  
symmetric nearby cycles
was originally conjectured by 
Vilonen-Xue and the first author.
It 
can be viewed as 
a symmetric space version of the 
well-known result that the sheaf of 
nearby cycles along the 
special fiber $\calN$ in the family 
$\chi:\fg\to\fc$ is isomorphic to the Springer sheaf.

\end{remark}

\begin{remark}
In \cite{CVX}, the authors used the sheaves of symmetric nearby cycles (with coefficients)
to produce all cuspidal complexes on $\calN_\fp$ and use them to establish a
Springer correspondence for the split symmetric pair of type A (see \cite{VX} for the cases of classical symmetric pairs). The 
formula established in Corollary \ref{formula for nearby cycles}
provides new
insights and methods into the study of Springer theory for general symmetric pairs
and real groups.
We will give one example below. The details will 
be discussed in a sequel \cite{CN2}.
\end{remark}

\subsubsection{Real Springer theory and Hecke algebras at roots of unity}
In \cite{G1}, Grinberg gave a generalization of 
Springer theory using nearby cycles. One of the main results in \emph{loc. cit.} is a description of the 
endomorphism algebra $\End(\mF_\fp)$ of the sheaf of symmetric nearby cycles 
as
a certain Hecke algebra at roots of unity.\footnote{In fact, he works in a more general setting of polar representations.}
To explain his result, let $(\Phi,\fa_\bbR^*)$ be the root system (possible non-reduced)
of $(\fg_\bbR,\fa_\bbR)$. 
For each $\alpha\in\Phi$
we denote by $\fg_{\bbR,\alpha}\subset\fg_\bbR$ the corresponding $\alpha$-eigenspace. 
Choose a system of simple roots $\Delta\subset\Phi$ and let $\mathrm S\subset\rW$ be the set of simple reflections 
of the little Weyl group associated to $\Delta$.
Consider the following algebra
\[\mathcal H_{G_\bbR}:=\bC[B_\rW]/(T_{s}-1)(T_{s}+(-1)^{d_{s}})_{s\in\mathrm S}\]
where 
$\bC[B_\rW]$ is the group algebra of the braid group $B_\rW$
of $\rW$ with generators $T_s$, $s\in\mathrm S$,
and $d_s$ is the integer given by
\[d_s=\sum_{\alpha\in\Delta, s_\alpha=s}\dim_{\mathbb R}(\fg_{\bbR,\alpha}),\] 
where $s_\alpha$ denotes the reflection
corresponding to the simple root $\alpha\in\Delta$\footnote{Since the root system might not be reduced, there 
might be more than one simple root $\alpha$ such that $s_\alpha=s$.}.
For examples, if $G_\bbR$ is a split real from, then 
we have 
$d_s=1$ for all $s\in\mathrm S$ and  
$\mathcal H_{G_\bbR}$ is isomorphic to the Hecke algebra associated to 
$\rW$ at $q=-1$.
On the other hand,
if $G_\bbR$ is a complex group, then
we have $d_s=2$ and
$\mathcal H_{G_\bbR}$ is isomorphic to the 
group algebra $\bC[\rW]$.

In \cite[Theorem 6.1]{G1}, Grinberg showed that there is a canonical isomorphism 
of algebras
\beq\label{Hecke algebra}
\End(\mF_\fp)\is\mathcal H_{G_\bbR}.
\eeq
Since 
the algebra $\mathcal H_{G_\bbR}$ is in general not semi-simple, 
as an interesting corollary of~\eqref{Hecke algebra}, we see that 
the sheaf of symmetric nearby cycles $\mF_\fp$
is not semi-simple in general.
 
Now combining Corollary \ref{formula for nearby cycles} with 
Grinberg's theorem, we obtain the 
the following result in real Springer theory:

\begin{corollary}\label{Real Springer}
 
We have a canonical isomorphism of 
algebras
\[\End(\mS_\bbR)\is\mathcal H_{G_\bbR}.\]
In particular, the
real Springer sheaf $\calS_\bbR$ is in general not semi-simple and,
for any 
$x\in\calN_\bbR$, the cohomologies 
$\on{H}^*(\mathcal{B}_{x},\bC)$ of the 
real Springer fiber 
$\calB_{x}=\pi_\bbR^{-1}(x)$ carry a natural action
of the algebra $\mathcal H_{G_\bbR}$.

\end{corollary}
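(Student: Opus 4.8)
The plan is to deduce Corollary \ref{Real Springer} by simply transporting Grinberg's isomorphism~\eqref{Hecke algebra} across the equivalence of categories supplied by Corollary \ref{formula for nearby cycles}. First I would invoke Corollary \ref{formula for nearby cycles}: it identifies, under the equivalence $D_K(\calN_\fp)\is D_{G_\bbR}(\calN_\bbR)$ of~\eqref{eq: intro equiv}, the sheaf of symmetric nearby cycles $\mF_\fp$ with the real Springer sheaf $\mS_\bbR = (\pi_\bbR)_!\bC[\dim_\bbR\calN_\bbR/2]$. Since an equivalence of categories induces an isomorphism of endomorphism algebras of corresponding objects, we get $\End(\mS_\bbR)\is\End(\mF_\fp)$ as $\bC$-algebras. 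Composing with Grinberg's isomorphism $\End(\mF_\fp)\is\mathcal H_{G_\bbR}$ from \cite[Theorem 6.1]{G1} then yields the asserted canonical isomorphism $\End(\mS_\bbR)\is\mathcal H_{G_\bbR}$. The non-semisimplicity of $\mS_\bbR$ is then immediate, since $\mathcal H_{G_\bbR}$ is not semisimple in general (e.g. the Hecke algebra at $q=-1$ in the split case), and a semisimple perverse sheaf would have semisimple endomorphism algebra.

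For the final assertion about the real Springer fibers, I would argue as follows. By Corollary \ref{formula for nearby cycles} the map $\pi_\bbR$ is semismall, so $\mS_\bbR$ is a perverse sheaf and proper base change gives $\on{H}^*(\calB_x,\bC) = i_x^*(\pi_\bbR)_!\bC[\dim_\bbR\calN_\bbR/2][\ast]$ where $i_x:\{x\}\hookrightarrow\calN_\bbR$ is the inclusion of a point $x\in\calN_\bbR$; that is, the stalk cohomology of $\mS_\bbR$ at $x$ computes $\on{H}^*(\calB_x,\bC)$ up to shift. Since the functor $i_x^*$ (or more precisely the total stalk functor $\bigoplus_j \calH^j i_x^*$) is a functor out of $D_{G_\bbR}(\calN_\bbR)$, it carries the $\End(\mS_\bbR)$-action on $\mS_\bbR$ to an action on the stalk, hence on $\on{H}^*(\calB_x,\bC)$; via the isomorphism $\End(\mS_\bbR)\is\mathcal H_{G_\bbR}$ this becomes the desired $\mathcal H_{G_\bbR}$-action. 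This is exactly the mechanism by which the ordinary Springer sheaf equips cohomology of Springer fibers with a Weyl group action, adapted to the present (non-semisimple) situation.

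The one point requiring a little care — and the closest thing to an obstacle — is making precise the claim that the functor $i_x^*$ respects the module structure, i.e. that we are using an honest action of the algebra $\End(\mS_\bbR)$ on the complex $\mS_\bbR$ in $D_{G_\bbR}(\calN_\bbR)$ and then applying a genuine functor; this is formal provided one works with the dg/triangulated enhancement so that $\End(\mS_\bbR)$ acts on $\mS_\bbR$ and every exact functor transports this to an action on the image, taking care that the grading on $\on{H}^*(\calB_x,\bC)$ is the one induced by the perverse normalization. Since all of this is standard for constructible derived categories and is exactly parallel to Grinberg's own deduction on the symmetric side, no genuinely new input is needed beyond Corollary \ref{formula for nearby cycles} and \cite[Theorem 6.1]{G1}. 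Thus the proof is a short two-line transport-of-structure argument followed by the standard stalk-functoriality remark; I would present it essentially in that form, perhaps recording explicitly that the isomorphism is canonical because both \eqref{eq: intro equiv} and Grinberg's isomorphism are.
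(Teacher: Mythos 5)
Your proposal is correct and follows exactly the route the paper intends: the paper presents Corollary~\ref{Real Springer} as an immediate consequence of Corollary~\ref{formula for nearby cycles} together with Grinberg's isomorphism~\eqref{Hecke algebra}, which is precisely your transport-of-structure argument, and the action on $\on{H}^*(\calB_x,\bC)$ via the stalk functor (using properness of $\pi_\bbR$, since $G_\bbR/B_\bbR\simeq K_\bbR/M_\bbR$ is compact) is the standard formal observation you describe. The caveat you raise about enhancements is more caution than necessary — any functor induces a ring map on endomorphism algebras, so $\End(\mS_\bbR)$ acts on the total stalk cohomology with no derived subtleties needed.
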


\begin{remark}
In \cite{CN2}, we will give an alternative proof of 
Corollary \ref{Real Springer} (for all types) following the classical arguments in Springer theory.
In particular, combining with Corollary \ref{formula for nearby cycles}, we obtain a new proof of 
Grinberg's theorem on the endomorphism algebra of $\mF_\fp$.
\end{remark}

\subsection{Previous work}
In our previous work \cite{CN1}, we 
establish Corollary \ref{intro: main 3} 
in the case of nilpotent cone
using 
the 
geometry of 
moduli space of
quasi-maps associated to a symmetric pair $(G,K)$.
In more detail, 
we use  the factorization properties of the moduli space of
quasi-maps to establish a real-symmetric homeomorphism 
in the setting of Beilinson-Drinfeld Grassmannians (for any reductive group $G$) and then deduce 
Corollary \ref{intro: main 3} using the
Lusztig embedding of the nilpotent cone for $\frakgl_n(\bC)$
 into
the affine Grassmannian for $\GL_n(\bC)$. 
The result in the present paper suggests that there should be a 
hyper-K$\dota$hler geometry interpretation of the 
results in \cite{CN1}.
This will be discussed in detail in a sequel.

We conclude the introduction with the following conjecture.
\begin{conjecture}
Theorems \ref{intro: main} and \ref{intro: main 2}
remain true when 
$\fg$ is of exceptional type.
\end{conjecture}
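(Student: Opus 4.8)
The final statement to prove is the conjecture that Theorems~\ref{intro: main} and~\ref{intro: main 2} remain valid for $\fg$ of exceptional type.

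\medskip

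\noindent\emph{Proof proposal.}
The plan is to reduce, as much as possible, to the general hyper-K\"ahler machinery of Theorem~\ref{intro: main 4}, which is type-independent: the only place where the hypothesis ``$\fg$ of classical type'' was used in the body of the paper is in exhibiting the family $\frakgl_n'(\bC)\to\bC^n$ (and its Lie-theoretic analogues) as a family of hyper-K\"ahler quotients of \emph{linear} spaces carrying compatible conjugations $\eta_H\times\eta_G$ and $\eta_{\bfM}$. So the first step is to seek, for each exceptional $\fg$ and each real form $G_\bbR$, a quaternionic realization of the family $\fg'=\fg\times_\fc\fc_{\fp,\bbR}\to\fc_{\fp,\bbR}$: concretely, a unitary quaternionic representation $\bfM$ of some $H_u\times G_u$ together with a conjugation $\eta_{\bfM}$ compatible with $\eta_H\times\eta_G$, such that $\frak M_{Z_\bC}\to Z_\bC$ recovers $\fg'\to\fc_{\fp,\bbR}$ and the $\on{SU}(2)$-action intertwines $\eta$ and $-\theta$ as in~\eqref{intro: involutions}. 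Given such a realization, parts (1)--(5) of Theorem~\ref{intro: main 2} and the homeomorphism of Theorem~\ref{intro: main} would follow verbatim from Theorem~\ref{intro: main 4} and the remark following it, exactly as in the classical-type argument; no new idea would be needed beyond the construction of $\bfM$.

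\medskip

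The difficulty is that Nakajima quiver varieties produce exactly the nilpotent cones and adjoint orbit closures of type $A$ (and, with the symplectic/orthogonal decorations of Moore--Vafa--Nakajima-type constructions, partial results in types $B,C,D$), but no quiver-variety description of generic adjoint orbit closures is known for $E_6,E_7,E_8,F_4,G_2$. So I would instead pursue the substitute strategy already flagged in the suppressed remark of the introduction: use Kronheimer's hyper-K\"ahler construction of (co)adjoint orbit closures and of the nilpotent cone via solutions to Nahm's equations, which is available for \emph{all} reductive $\fg$. One models $\overline{\calO}_\xi$ as a hyper-K\"ahler quotient of an infinite-dimensional affine space of Nahm data by a gauge group; the split conjugation on $\fg$ and the Cartan involution induce conjugations on the Nahm data, and the residual $\on{SU}(2)$ acting by rotating the three Nahm equations plays the role of $\phi(q)$. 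The main obstacle, and the place where the proof as proposed would need genuinely new input, is that Theorem~\ref{intro: main 4} is proved for \emph{finite-dimensional linear} quaternionic representations; extending it, with the same control on orbit-type strata and on real-analyticity of the stratumwise homeomorphism, to the infinite-dimensional Nahm setting requires the Uhlenbeck-type compactness and regularity analysis from Kronheimer's and Biquard's work \cite{Kr,Biq,Bie}, and a verification that the conjugation $\eta$ and the $\on{SU}(2)$-rotation descend compatibly through the gauge quotient at all orbit types (not just the regular one, where \cite{Bie,Biq} already give an extended Kostant--Sekiguchi statement).

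\medskip

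Concretely I would proceed in the following order. First, assemble from \cite{Kr,Biq} the hyper-K\"ahler description of each fiber $\calN_\xi=\chi^{-1}(\xi)$ for $\xi\in\fc_{\fp,\bbR}$, uniformly in $\xi$, as a gauge-theoretic quotient, and identify the $\on{SU}(2)$-symmetry. Second, write down the conjugations on the Nahm data induced by $\eta$ (split conjugation) and by $-\theta$, and check they are exchanged by $\phi(q_a)$, giving the family $\alpha_a$ on $\calN_\xi$; properties (1),(2),(4),(5) of Theorem~\ref{intro: main 2} are then formal, and (3) follows since the $\on{SU}(2)$-action and $\eta$ each carry $G$-orbits to $G$-orbits. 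Third, prove the infinite-dimensional analogue of Theorem~\ref{intro: main 4}(2): the fixed-point loci $\calN_{\xi,\bbR}$ and $\calN_{\xi,\fp}$ are cut out by the orbit-type strata, and averaging the interpolating vector field $\partial_a$ on $\calN_\xi\times[0,1]$ over the $\bZ/2$-action by $\alpha_a$ (as in the suppressed remark after Theorem~\ref{intro: main 2}) produces a stratified homeomorphism, real-analytic on strata — here one must invoke the regularity theory for Nahm's equations to know the strata are real-analytic submanifolds and the averaged flow integrates. Fourth, glue over $\fc_{\fp,\bbR}$ to pass from fibers to the family $\fg'\to\fc_{\fp,\bbR}$, and finally check equivariance for $K_\bbR\times\bbR^\times$, completing Theorems~\ref{intro: main 2} and then~\ref{intro: main} for exceptional $\fg$. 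I expect step three to be the crux; everything else is a transcription of the classical-type argument into the Nahm-data language.
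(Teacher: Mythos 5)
This statement is posed in the paper as an open \emph{conjecture}; the paper itself offers no proof, and the authors explicitly restrict Theorems~\ref{intro: main} and~\ref{intro: main 2} to classical type precisely because the quiver-variety realization via \cite{KP,KS,M,MV,Nak1} is unavailable for $E_6,E_7,E_8,F_4,G_2$. So there is no ``paper's own proof'' against which to compare, and a complete argument from you would be new mathematics.

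Your proposal is a sensible research program, and you correctly identify both the type-independent core (Theorem~\ref{intro: main 4}, the $\on{SU}(2)$-rotation interpolating $\eta$ and $-\theta$, and the averaging trick from Lemma~\ref{fixed points}) and the one place where classical type is really used (realizing $\fg'\to\fc_{\fp,\bbR}$ as a family of hyper-K\"ahler quotients of \emph{finite-dimensional} linear spaces). You also correctly flag the crux: Theorem~\ref{intro: main 4} and Lemma~\ref{fixed points} are proved for finite-dimensional unitary quaternionic representations, where compactness of the norm level set $C$, completeness of the averaged vector field, and real-analyticity of orbit-type strata all come cheaply. In Kronheimer's Nahm-data picture the ambient space is an infinite-dimensional affine space modulo a gauge group, and none of those three ingredients is automatic — you would need Uhlenbeck-type compactness, a slice theorem giving real-analytic orbit-type strata through singular Nahm data, and a proof that the averaged flow integrates and stays stratified. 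As you yourself say, ``step three is the crux''; that is exactly the gap. Your write-up is therefore an outline of an approach with an honestly acknowledged missing step, not a proof, and I would not accept it as settling the conjecture. If you want to pursue it, the place to start is checking whether Biquard's and Bielawski's extended Kostant--Sekiguchi results \cite{Bie,Biq} already contain, even implicitly, a stratified version at non-regular orbit types; if they do not, that stratified regularity theory is the new theorem you would need to prove.
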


\subsection{Organization}

We briefly summarize here the main goals of each section. 
In Sect.~\ref{inv on HK} immediately to follow, we 
study involutions on hyper-K$\ddot{\text a}$hler quotients
of linear spaces.
In Sect.~\ref{quiver varieties}, we apply the results established in the previous section 
to the case of quiver varieties.
In Sect.~\ref{main results}, we establish our main results 
Theorems \ref{homeomorphism for g} and \ref{family of involutions for g}. 
In Sect.~\ref{applications}, we discuss applications to 
Springer theory for real groups and symmetric spaces.

\subsection{Acknowledgements} 
The authors would like to thank Marco Gualtieri for inspiring discussions about symmetries of hyper-K$\ddot{\text a}$hler quotients and Jeffrey Adams for useful discussions about 
Kostant-Sekiguchi correspondence.
We also would like to thank Kari Vilonen and Ting Xue for useful comments.

The research of
T.H. Chen is supported by NSF grant DMS-1702337
and that of D. Nadler  by NSF grant DMS-1802373.


\section{A family of involutions on hyper-K$\ddot{\text a}$hler quotients}\label{inv on HK}
In this section we introduce a family of involutions 
on hyper-K$\dota$hler quotients of linear spaces with remarkable properties.
The main references for hyper-K$\dota$hler quotients are \cite{Hi} and \cite{HKLR}.

\subsection{Quaternions}
Let $\mathbb H=\bbR\oplus\bbR i\oplus\bbR j\oplus\bbR k$ be the 
quaternions.
For any $x=x_0+x_1i+x_2j+x_3k\in\mathbb H$ we denote by
$\bar x=x_0-x_1i-x_2j-x_3k$. Then 
the paring 
$(x,x')=\on{Re}(x\bar x')$ defines a real-valued inner product on $\mathbb H$.
We denote by 
$\on{Im}(\mathbb H)=\bbR i\oplus\bbR j\oplus\bbR k$, the pure imaginary quaternions,
and $\on{Sp}(1)=\{x\in\mathbb H|(x,x)=1\}$ the group of quaternions of norm one.


\subsection{Hyper-K$\dota$hler quotient of linear spaces}
Let $H$ be a complex reductive group with
compact real form $H_u$.
Let $\textbf M$ 
be a quaternionic representation of $H_u$, that is, $\textbf M$ is 
a finite dimensional quaternionic vector space 
together with a $\mathbb H$-linear action of $H_u$.
We assume that the quaternionic representation is unitary, that is,
there is 
a $H_u$-inner product $(,)$ on $\textbf M$ which is hermitian with respect to
the complex structures $I,J,K$ on $\textbf M$ given by 
multiplication by $i,j,k$ respectively.
We have a natural complex representation of 
$H$ on $\textbf M$ preserving the 
complex symplectic form 
$\omega_\bC(v,v')=(Jv,v')+i(Kv,v')$ on 
$\textbf M$.

We have the hyper-K$\dota$hler moment map 
\[\mu:\textbf M\to \on{Im}\mathbb H\otimes_\bbR\frak h_u^*\] 
satisfying
\[\langle\xi,\mu(\phi)\rangle=(I\xi\phi,\phi)i+(J\xi\phi,\phi)j+(K\xi\phi,\phi)k\in\on{Im}\mathbb H\]
where $\xi\in\frak h_u$, $\phi\in\textbf M$, and $\langle,\rangle$ is the paring 
between 
$\frak h^*_u$ and $\frak h_u$.
The map $\mu$ has the following equivariant properties:
(1) it intertwines the 
$\on{Sp}(1)\times H_u$ action on $\textbf M$ 
and the one on 
$\on{Im}(\mathbb H)\otimes_\bbR\frak h_u^*$ given 
by $(q,h)(w,u)=(\on{Ad}_{q}w,\on{Ad}_{h^{-1}}u)$
(2) we have 
$\mu(tv)=t^2\mu(v)$ for $t\in\bbR^\times, v\in\textbf M$.

Using the isomorphism $\on{Im}\mathbb H=\bbR\oplus\bC$ sending 
$x_1i+x_2j+x_3k$ to $(x_1,x_2+x_3i)$, we can identify
$\on{Im}\mathbb H\otimes\frak h_u^*=\frak h_u^*\oplus\frak h^*$ and 
hence obtain 
a decomposition of the moment map
\[\mu=\mu_\bbR\oplus\mu_\bC:\textbf M\to\frak h_u^*\oplus\frak h^*\]
of $\mu$ into real and complex components. 
The map 
$\mu_\bC:\textbf M\to\frak h^*$
is holomorphic with respect to the complex structure 
$I$ on $\textbf M$ and satisfies
\[\langle\xi,\mu_\bC(\phi)\rangle=\omega_\bC(\xi\phi,\phi)\]
where $\xi\in\frak h$ and $\phi\in\textbf M$.
Moreover, it is 
$H$-equivariant with respect to the complex representation of $H$ on
$\textbf M$ and the the inverse of the adjoint representation on $\frak h^*$.

Let $Z=\{v\in\frak h_u^*|\on{Ad}_h(v)=v\text{\ for all\ }h\in H_u\}$ and 
$Z_\bC=\mathbb C\otimes_\bbR Z$. Then we have $\on{Im}\mathbb H\otimes_\bbR Z=Z\oplus Z_\bC$.
For any
$\zeta_\bC\in Z_
\bC$, we can consider 
the
hyper-K$\dota$hler quotient
\beq\mathfrak M_{\zeta_\bC}=
\mu^{-1}_\bbR(0)\cap\mu^{-1}_\bbC(-\zeta_\bbC)/H_u.
\eeq
We have the holomorphic description 
\[\mathfrak M_{\zeta_\bC}\is\mu_\bC^{-1}(-\zeta_\bC)// H\]
where the right hand side is the categorial quotient
of $\mu_\bC^{-1}(-\zeta_\bC)$ by $H$.
One can form a perturbed hyper-K$\dota$hler quotient
\[\mathfrak M_{(\zeta_\bbR,\zeta_\bC)}=\mu^{-1}_\bbR(-\zeta_\bbR)\cap\mu^{-1}_\bbC(-\zeta_\bbC)/H_u\]
with not necessarily zero real component $\zeta_\bbR$.
The composition  
$\mu^{-1}_\bbR(-\zeta_\bbR)\cap\mu^{-1}_\bbC(-\zeta_\bbC)\to\mu^{-1}_\bbC(-\zeta_\bbC)
\to\mu^{-1}_\bbC(-\zeta_\bbC)//H$
gives rise to a map
\beq\label{resolution}
\pi:\mathfrak M_{(\zeta_\bbR,\zeta_\bC)}\to\mathfrak M_{\zeta_\bC}
\eeq
which is holomorphic with respect to the complex structure $I$.

From now on we will fix a real parameter 
$\zeta_\bbR$. 
For any
subset 
$S\subset
Z_\bC$ we can consider the following family of hyper-K$\dota$hler quotients
\[\chi_S:\frak M_S=\mu_\bbR^{-1}(0)\cap\mu_\bbC^{-1}(-S)/ H_u\to S\] 
\[\tilde\chi_S:\frak M_{(\zeta_\bbR,S)}=\mu_\bbR^{-1}(-\zeta_\bbR)\cap\mu_\bbC^{-1}(-S)/H_u\to S\]
Then the map~\eqref{resolution} gives rise to a map 
\beq\label{resolution quiver}
\pi_S:\frak M_{(\zeta_\bbR,S)}\to\frak M_S
\eeq
compatible with the projection maps to $S$.

\subsection{A stratification}
Let $\zeta_\bC\in Z_\bC$.
Let  
$L$ be a subgroup of $H_u$. We denote by 
$\textbf M_{(L)}$ be the set of all points in $\textbf M$
whose stabilizer is conjugate to $L$.
A point in $\frak M_{\zeta_\bC}$ is said to be of stabilizer type $(L)$ if 
it has a representative in $\textbf M_{(L)}$.
The set of all points of stabilizer type $(L)$ is denoted by
$\frak M_{\zeta_\bC,(L)}$.
We have a \emph{orbit type} stratification
\beq\label{stra}
\frak M_{\zeta_\bC}=
\bigsqcup_{(L)}
\frak M_{\zeta_\bC,(L)}
\eeq
where the union runs over the set of all 
conjugacy classes of subgroups of $U$.
Each stratum $\frak M_{\zeta_\bC,(L)}$ is a smooth hyper-K$\dota$hler manifold, moreover, it is an affine symplectic variety with respect to the complex structure $I$.

\subsection{Symmetries of hyper-K$\dota$hler quotients}
Let $G$ be another complex reductive group with 
a compact real form $G_u$.
Consider a unitary representation of 
$G_u$ on $\textbf M$
commuting with the $H_u$-action on $\textbf M$.
Then for any $S\subset Z_\bC$, the action 
of the complexification $G$ on $\bfM$ descends to an action on the hyper-K$\dota$hler quotient
$\mathfrak M_S$ which
is 
compatible with the projection map to $S$ and holomorphic with respect to the complex structure $I$.

Assume $S\subset Z_\bC$ is a $\bbR$-linear subspace.
Then the action of $\bbR^\times$ on 
$\textbf M$ descends to a $G$-equivaraint $\bbR^\times$-action 
on $\frak M_S$:
\beq\label{G_m action}
\phi(t):\frak M_S\to\frak M_S, \ \ 
\ t\in\bbR^\times.
\eeq
Moreover, we have a commutative diagram
\[\xymatrix{\frak M_S\ar[r]^{\phi(t)}\ar[d]^{}&\frak M_S\ar[d]^{}\\
S\ar[r]^{t^2(-)}&S}\] 
where the bottom arrow is the multiplication by $t^2$.

Let $q\in \on{Sp}(1)$ 
such that $\on{Ad}_q(S)\subset S$.
Here $\on{Ad}_q:\on{Im}\mathbb H\otimes Z\to \on{Im}\mathbb H\otimes Z$ is the map 
$\on{Ad}_q(w,u)=(\on{Ad}_qw,u)$ and
we identify $Z_\bC=(\bbR j\oplus\bbR k)\otimes Z$, and hence $S$, as a subspace 
of $\on{Im}\mathbb H\otimes Z$ with zero $i$-component.
The action of $q\in\on{Sp}(1)$   
on $\textbf M$ gives rise to a $G_u$-equivaraint map
\beq\label{r_q}
\xymatrix{\phi(q):\frak M_S\to\frak M_S}
\eeq
commuting with the $\bbR^\times$-actions.
In addition, we have the following commutative diagram
\[\xymatrix{\frak M_S\ar[r]^{\phi(q)}\ar[d]^{}&\frak M_S\ar[d]^{}\\
S\ar[r]^{}&S}\]
where the bottom arrow is 
$\on{Ad}_q:S\to S$.

It is straightforward to check that 
the stratum $\frak M_{\zeta_\bC,(L)}$ in~\eqref{stra} is stable under the 
$G$ and $\bbR^\times$-actions. 
Moreover, for any $q\in\on{Sp}(1)$ (resp. $t\in\bbR^\times$) and $S$ as above 
the map $\phi(q)$ (resp. $\phi(t)$) is compatible with the 
stratifications in the sense that it maps the 
stratum 
$\frak M_{\zeta_\bC,(L)}$ in the fiber 
$\chi_S^{-1}(\zeta_\bC)=\frak M_{\zeta_\bC}$
to the corresponding stratum 
$\frak M_{\zeta'_\bC,(L)}$ in the fiber 
$\chi_S^{-1}(\zeta'_\bC)=\frak M_{\zeta'_\bC}$
where $\zeta_\bbC'=\on{Ad}_q\zeta_\bC$ (resp. $\zeta_\bC'=t^2\zeta_\bC$).

\begin{example}
Let $S=0$. Then we have 
$\on{Ad}_q(0)=0$ for all $q\in\on{Sp}(1)$ and the family of maps $\phi(q)$ in~\eqref{r_q}
gives rise to a $(G_u\times\bbR^\times)$-equivarint 
$\on{Sp}(1)$-action on 
$\frak M_0$, called the \emph{hyper-K$\dota$hler $\on{Sp}(1)$-action}. Moreover the stratum 
$\frak M_{0,(L)}$ is stable under the $\on{Sp}(1)$-action.

\end{example}

\subsection{Conjugations on $\mathfrak M_{Z_\bC}$}\label{family of conjugations}

\begin{definition}\label{symplectic}
Let $\eta_H$ and $\eta_{\textbf M}$ be conjugations on 
$H$ and $\textbf M$ respectively.
We say that $\eta_H$ and $\eta_{\textbf M}$ are compatible with the 
symplectic representation of $H$ on $\textbf M$
 if the following holds:
\begin{enumerate}
\item
we have 
$\eta_{\textbf M}(hv)=\eta_H(h)\eta_{\textbf M}(v)$
for all $h\in H$ and $v\in\textbf M$.

\item
we have 
$\omega_\bC(\eta_{\textbf M}(v),\eta_{\textbf M}(v'))=\overline{\omega_\bC(v,v')}$
for all $v,v'\in\textbf M$.

\end{enumerate}

 \end{definition}

\begin{lemma}\label{compatibility with mu_C}
Let $\eta_H$ and $\eta_{\textbf M}$ be conjugations on 
$H$ and $\textbf M$ compatible with the 
symplectic representation of $H$ on $\textbf M$.
Then the
complex moment map
$\mu_\bC:\textbf M\to\frak h^*$ 
intertwines 
$\eta_{\textbf M}$ and $\eta_H$.
\end{lemma}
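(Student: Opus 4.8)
The plan is to verify the intertwining identity $\mu_\bC(\eta_{\textbf M}(v)) = \eta_H(\mu_\bC(v))$ by pairing both sides against an arbitrary $\xi \in \frak h$ and reducing everything to the defining formula $\langle \xi, \mu_\bC(\phi)\rangle = \omega_\bC(\xi\phi,\phi)$ together with the two compatibility axioms in Definition \ref{symplectic}. First I would fix the conjugation $\eta_H$ on $\frak h$ (the differential of the group conjugation) and the induced conjugation $\eta_{\frak h^*}$ on $\frak h^*$ defined by $\langle \eta_{\frak h^*}(\lambda), \xi\rangle = \overline{\langle \lambda, \eta_H(\xi)\rangle}$. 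The claim ``$\mu_\bC$ intertwines $\eta_{\textbf M}$ and $\eta_H$'' then means precisely $\mu_\bC \circ \eta_{\textbf M} = \eta_{\frak h^*} \circ \mu_\bC$, so it suffices to show that for all $\xi \in \frak h$ and $v \in \textbf M$,
\[
\langle \eta_{\frak h^*}(\mu_\bC(v)), \xi\rangle = \langle \mu_\bC(\eta_{\textbf M}(v)), \xi\rangle.
\]

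Next I would compute the right-hand side: by the defining formula, $\langle \mu_\bC(\eta_{\textbf M}(v)), \xi\rangle = \omega_\bC(\xi \cdot \eta_{\textbf M}(v), \eta_{\textbf M}(v))$. Since $\eta_{\textbf M}$ is a conjugation (antilinear), writing $\xi = \eta_H(\eta_H(\xi))$ and applying axiom (1) of Definition \ref{symplectic} in the form $\eta_{\textbf M}(\eta_H(\xi) \cdot v) = \xi \cdot \eta_{\textbf M}(v)$ — here one must be slightly careful that $\eta_H(\xi)$ denotes the Lie algebra conjugation, compatible with the group statement via differentiation — we get $\xi \cdot \eta_{\textbf M}(v) = \eta_{\textbf M}(\eta_H(\xi)\cdot v)$. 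Hence the right-hand side equals $\omega_\bC(\eta_{\textbf M}(\eta_H(\xi) \cdot v), \eta_{\textbf M}(v))$, and now axiom (2) gives $\omega_\bC(\eta_{\textbf M}(w),\eta_{\textbf M}(w')) = \overline{\omega_\bC(w,w')}$, so this is $\overline{\omega_\bC(\eta_H(\xi)\cdot v, v)} = \overline{\langle \mu_\bC(v), \eta_H(\xi)\rangle}$. By the definition of $\eta_{\frak h^*}$ this is exactly $\langle \eta_{\frak h^*}(\mu_\bC(v)), \xi\rangle$, which is the left-hand side. This closes the argument.

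The only genuine subtlety — and the step I would be most careful about — is the bookkeeping translating the group-level conjugation $\eta_H : H \to H$ into its Lie-algebra differential on $\frak h$ and then onto $\frak h^*$, and making sure that the antilinearity of $\eta_{\textbf M}$ is used consistently (so that $\xi \cdot \eta_{\textbf M}(v)$, not $\eta_H(\xi)\cdot\eta_{\textbf M}(v)$, appears when $\xi$ is real but differentiation of a conjugation is antilinear in general). Everything else is a direct substitution into the two defining formulas, so there is no real analytic or geometric obstacle; the content is entirely the two axioms of Definition \ref{symplectic}. I would state the $\frak h^*$-conjugation convention explicitly at the start of the proof to avoid any sign or conjugation ambiguity.
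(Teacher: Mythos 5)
Your proof is correct and follows essentially the same computation as the paper: pair $\mu_\bC(\eta_{\bfM}(v))$ against $\xi\in\frak h$, rewrite $\xi\cdot\eta_{\bfM}(v)=\eta_{\bfM}(\eta_H(\xi)\cdot v)$ via the equivariance axiom (applied to $\eta_H(\xi)$ and using $\eta_H^2=\id$), then use the antiholomorphicity of $\omega_\bC$ to land on $\overline{\langle\eta_H(\xi),\mu_\bC(v)\rangle}$. The only difference is cosmetic: you explicitly name the induced dual conjugation $\eta_{\frak h^*}$, whereas the paper reuses the symbol $\eta_H$ for the map on $\frak h^*$.
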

\begin{proof}
For any $\xi\in\frak h, v\in\textbf M$, we have 
\[\langle\xi,\mu_\bC(\eta_{\textbf M}(v))\rangle=\omega_\bC(\xi\eta_{\textbf M}(v),\eta_{\textbf M}(v))=
\omega_\bC(\eta_{\textbf M}(\eta_H(\xi)v),\eta_{\textbf M}(v))=
\overline{\omega_\bC(\eta_H(\xi)v,v)}=\]
\[=\overline{\langle\eta_H(\xi),\mu_\bC(v)\rangle}=\langle\xi,\eta_H(\mu_\bC(v))\rangle.\]
This implies $\mu_\bC(\eta_{\textbf M}(v))=\eta_H(\mu_\bC(v))$
for all $v\in\textbf M$. The lemma follows.
\end{proof}

Let $\eta_H$ and $\eta_{\textbf M}$ be as in Lemma \ref{compatibility with mu_C}.
Then the center of $\frak h$, and hence 
$Z_\bC$, is stable under 
$\eta_H$. 
It follows that, for any 
$\zeta_\bC\in Z_\bC$, the conjugation $\eta_{\textbf M}$ on $\textbf M$ descends to a map
\beq\label{fiber bar}
\frak M_{\zeta_\bC}=\mu_\bC^{-1}(-\zeta_\bC)//H\to\frak M_{\eta_{H}(\zeta_\bC)}=\mu_\bC^{-1}(-\eta_{H}(\zeta_\bC))//H
\eeq
which is anti-holomorphic with respect to the complex structure $I$.
Moreover, it
maps the stratum $\frak M_{\zeta_\bC,(L)}$
to the corresponding stratum 
$\frak M_{\zeta_\bC,(L)}$.
As $\zeta_\bC$ varies over $Z_\bbC$, the map~\eqref{fiber bar}
 organize into a map
\beq\label{conjugation}
\eta_{Z_\bC}:\frak M_{Z_\bC}\to\frak M_{Z_\bC}
\eeq
making the following diagram commute
\beq\label{proj to Z_C}
\xymatrix{\frak M_{Z_\bC}\ar[r]^{\eta_{Z_\bC}}\ar[d]^{}&\frak M_{Z_\bC}\ar[d]^{}\\
Z_\bC\ar[r]^{\eta_H}&Z_\bC}.
\eeq
We will call $\eta_{Z_\bC}$ the \emph{conjugation}
on $\frak M_{Z_\bC}$ associated to the conjugations $\eta_H$ and $\eta_{\textbf M}$.


\subsection{Compatibility with  symmetries}
Recall the $\bbR$-subspace 
$Z\subset Z_\bC$.
For any $s\in[0,2\pi]$, let 
\[q_s=\cos(s)i+\sin(s)k\in\on{Sp}(1).\]
A direct computation shows that 
$\on{Ad}_{q_s}$ preserves the subspace 
$Z=\bbR j\otimes_\bbR Z\subset\on{Im}\mathbb H\otimes_\bbR Z$\footnote{Note that
$Z_\bC=(\bbR j+\bbR k)\otimes_\bbR Z$ is not stable under the family of maps 
$\on{Ad}_{q_s}$.} and its restriction to $Z$ is given by $-\id_Z$.
Consider the family of hyper-K$\dota$hler quotients 
\beq
\frak M_{Z}=\mu_\bbR^{-1}(0)\cap\mu_\bbC^{-1}(-Z)/H_u
\eeq
over $Z$.
Then the discussion in the previous section shows that 
there is a family of  
maps 
\beq
\phi_s=\phi(q_s):\frak M_{Z}\to \frak M_{Z}\ \ \ \ s\in[0,2\pi]
\eeq
making the following diagram commute
\beq\xymatrix{\frak M_Z\ar[r]^{\phi_s}\ar[d]^{}&\frak M_Z\ar[d]^{}\\
Z\ar[r]^{-\id_Z}&Z}.
\eeq
Consider $j\in\on{Sp}(1)$.
Since $\on{Ad}_j=\id_Z$ on $Z$ we have a map
\beq
\phi(j):\frak M_Z\to\frak M_Z
\eeq
making the following diagram commute
\beq\xymatrix{\frak M_Z\ar[r]^{\phi(j)}\ar[d]^{}&\frak M_Z\ar[d]^{}\\
Z\ar[r]^{\id_Z}&Z}.
\eeq
Note that 
\beq
\phi_s^2=\phi(j)^2=\phi(-1).
\eeq
In particular, if $\phi(-1)$ is equal to the identity map 
then $\phi_s$ and $\phi(j)$ are involutions on 
$\frak M_Z$.

Our next goal is to study the compatibility between
the maps $\phi_s$, $\phi(j)$, and the conjugation 
$\eta_{Z_\bC}$
introduced in Section \ref{family of conjugations}.

\begin{definition}\label{quaternionic}
Let $\eta_H$ and $\eta_{\textbf M}$ be conjugations on 
$H$ and $\textbf M$ respectively.
We say that  $\eta_H$ and $\eta_{\textbf M}$
are compatible with the unitary quaternionic representation 
of $H_u$ on $\textbf M$ if the following holds:
\begin{enumerate}
\item the pair $(\eta_H$,$\eta_{\textbf M})$ is compatible with 
the symplectic representation of $H$ on $\textbf M$ (see Definition \ref{symplectic})
\item $\eta_{\textbf M}$ preserves the inner product $(,)$, that is, we have 
$(\eta_{\textbf M}(v),\eta_{\textbf M}(v'))=(v,v')$ for $v,v'\in\textbf M$.
\item $\eta_H$ commutes with the Cartan conjugation $\delta_H$.
\end{enumerate}
\end{definition}

\begin{prop}\label{compatibility of maps}
Let $\eta_H$ and $\eta_{\textbf M}$ be 
conjugations on $H$ and $\textbf M$ compatible with 
the unitary quaternionic representation of $H_u$ on $\textbf M$.
Let $\eta_{Z_\bC}:\frak M_{Z_\bC}\to\frak M_{Z_\bC}$
be the conjugation in~\eqref{conjugation}\footnote{$\eta_{Z_\bC}$ is well-defined since 
$\eta_H$ and $\eta_{\textbf M}$ are compatible with the symplectic representation 
of $H$ on $\textbf M$.}.
Then the subspace $\frak M_Z\subset\frak M_{Z_\bC}$ is stable under 
$\eta_{Z_\bC}$. Moreover if we denote by 
\beq\label{real conjugation}
\eta_Z:\frak M_Z\to \frak M_Z
\eeq the resulting map, we have the following equality of maps on $\frak M_Z$
\beq\label{equality}
\phi_s\circ\eta_Z=\phi(-1)\circ\eta_Z\circ\phi_s\ \ \ \ 
\phi_s\circ\phi(j)=\phi(-j)\circ\phi_s\ \ \ \ \phi(j)\circ\eta_Z=\eta_Z\circ \phi(j)
\eeq

\end{prop}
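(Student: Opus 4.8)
The plan is to verify each of the three identities in~\eqref{equality} by pulling everything back to the level of the quaternionic vector space $\textbf M$, where all three maps are induced by honest linear (or conjugate-linear) operators, checking the identities there, and then observing that they descend to $\frak M_Z$. First I would record the operators: $\phi_s$ is induced by left multiplication by $q_s = \cos(s)i + \sin(s)k$ on $\textbf M$ (viewed as a left $\mathbb H$-module, so that the $\on{Sp}(1)$-action is $v \mapsto q^{-1}v = \bar q v$; I will use whichever convention the paper has fixed in the construction of $\phi(q)$), $\phi(j)$ by multiplication by $j$, $\phi(-1)$ by multiplication by $-1$, and $\eta_Z$ by the conjugation $\eta_{\textbf M}$ composed with the identification of $\mu_\bbR^{-1}(0)\cap\mu_\bbC^{-1}(-Z)$ with itself (here one uses Lemma~\ref{compatibility with mu_C} together with the fact, noted just before the proposition, that $\on{Ad}_{q_s}$ and $\on{Ad}_j$ act as $\pm\id$ on $Z$, so the relevant level sets are preserved and the maps are well-defined on $\frak M_Z$). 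The stability of $\frak M_Z \subset \frak M_{Z_\bC}$ under $\eta_{Z_\bC}$ is the statement that $\eta_H$ fixes $Z \subset Z_\bC$ pointwise, which holds because $Z = \mathbb R j \otimes_\bbR Z$ corresponds under the splitting $Z_\bC = (\mathbb R j \oplus \mathbb R k)\otimes_\bbR Z$ to the real locus, combined with condition~(2) of Definition~\ref{symplectic} which forces $\eta_{\textbf M}$, hence $\eta_{Z_\bC}$ on the base, to act by complex conjugation on $\frak h^*$ and in particular to fix the real points.

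The heart is then a computation in the quaternion algebra. For the second identity, $\phi_s \circ \phi(j) = \phi(-j)\circ \phi_s$, since these maps come from the (right or left) multiplication action of $\on{Sp}(1)$, I just need $q_s \cdot j$ and $(-j)\cdot q_s$ to agree up to the central element relating the two sides, or more precisely that $q_s j q_s^{-1} = -j$ when conjugating; concretely $q_s j \bar q_s = (\cos s\, i + \sin s\, k) j (\cos s\,(-i) + \sin s\,(-k))$, and using $ij = k$, $kj = -i$, $ji = -k$, $jk = i$ one gets $q_s j \bar q_s = -j$ after expanding, which is exactly the claim (the sign is the content). For the third identity, $\phi(j)\circ \eta_Z = \eta_Z \circ \phi(j)$: this reduces to the assertion that $\eta_{\textbf M}$ commutes with multiplication by $j$ on $\textbf M$. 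This is where condition~(3) of Definition~\ref{quaternionic} enters — $\eta_H$ commutes with the Cartan conjugation $\delta_H$ — together with the fact (to be recalled from the setup, cf.\ how $J$, $K$ interact with $\eta_{\textbf M}$) that $\eta_{\textbf M}$ being compatible with the unitary quaternionic structure forces it to intertwine $J$ with $\pm J$; I expect the precise outcome is $\eta_{\textbf M}\circ J = J \circ \eta_{\textbf M}$ and $\eta_{\textbf M}\circ K = -K\circ\eta_{\textbf M}$ (consistent with $\eta_{\textbf M}$ being $I$-antilinear and $\omega_\bC$-conjugating), which gives the third identity directly and, for the first identity, yields $\eta_Z \circ \phi_s = \phi_s' \circ \eta_Z$ where $\phi_s' = \phi(q_s')$ with $q_s' = \cos(s)i - \sin(s)k$; then $\phi_s' = \phi(-1)\circ\phi_s^{-1} = \phi(-1)\circ\phi(j)\circ\phi_s\circ\phi(j)^{-1}\cdots$ — more simply, $q_s' = q_s^{-1}\cdot(\text{central})$ in the relevant sense, and one massages this into $\phi_s \circ \eta_Z = \phi(-1)\circ\eta_Z\circ\phi_s$ using the involutivity relation $\phi_s^2 = \phi(-1)$ recorded just above the proposition.

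So the key steps, in order, are: (i) identify all four maps with their linear models on $\textbf M$ and confirm well-definedness on $\frak M_Z$ via the $\on{Ad}_{q}$-invariance of $Z$ and Lemma~\ref{compatibility with mu_C}; (ii) establish the commutation rules of $\eta_{\textbf M}$ with $I, J, K$ from Definition~\ref{quaternionic}, in particular extracting the crucial signs from conditions~(2)–(3) and the $\omega_\bC$-conjugating property in Definition~\ref{symplectic}(2); (iii) do the three bare-hands quaternion computations $q_s j \bar q_s = -j$ etc.; (iv) assemble, using $\phi_s^2 = \phi(j)^2 = \phi(-1)$, into the three displayed equalities, and note each passes to the quotient because all the maps in sight are $H_u$-equivariant. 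The main obstacle I anticipate is step~(ii): getting the signs exactly right in how $\eta_{\textbf M}$ conjugates $J$ and $K$ — this is the one place where a wrong sign propagates into a wrong statement, and it requires carefully combining the $I$-antilinearity of $\eta_{\textbf M}$ (it is a conjugation for the complex structure $I$), the symplectic-compatibility condition~(2) of Definition~\ref{symplectic}, and the inner-product preservation~(2) of Definition~\ref{quaternionic}; everything else is either formal (descent, equivariance) or a short explicit Hamilton-relation calculation.
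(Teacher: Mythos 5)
Your overall strategy coincides with the paper's: lift all four maps to the linear level on $\textbf M$, establish the commutation rules of $\eta_{\textbf M}$ with $I,J,K$, verify the identities as operator equations on $\mu_\bbR^{-1}(0)\cap\mu_\bC^{-1}(-Z)$, and descend. In particular your guess that $\eta_{\textbf M}$ anticommutes with $I$ and $K$ but commutes with $J$ is exactly what the paper proves, and the quaternion computation $q_s j \bar q_s = -j$ is the abstract-quaternion version of the paper's identity $(\cos s\,I + \sin s\,K)J = -J(\cos s\,I + \sin s\,K)$.

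However, several details are off or conflated. (a) You say stability of $\frak M_Z$ amounts to $\eta_H$ fixing $Z$ pointwise, citing Definition~\ref{symplectic}(2); but $\eta_H$ does \emph{not} fix $Z$ pointwise in general (e.g.\ for the split conjugation of $\bC^\times$, $\eta_H$ acts as $-1$ on $Z\cong\bbR$), and what is needed is only that $\eta_H$ \emph{preserves} $Z$, which the paper gets from condition (3) of Definition~\ref{quaternionic}: $\eta_H$ commutes with $\delta_H$, hence preserves $\frak h_u$ and its center, hence preserves $Z\subset\frak h_u^*$. (b) You attribute $J\circ\eta_{\textbf M}=\eta_{\textbf M}\circ J$ to condition (3), but that condition concerns $\eta_H$ on $H$, not $\eta_{\textbf M}$ on $\textbf M$; the commutation with $J$ is derived from conditions (1) and (2) — the $\omega_\bC$-conjugating property gives $(J\eta_{\textbf M}v,\eta_{\textbf M}v')=(Jv,v')$, and inner-product preservation then forces $J\eta_{\textbf M}=\eta_{\textbf M}J$. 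You flag this derivation as the main obstacle and only sketch it, but it is the essential content (the paper also uses it, plus the same computation with $\mu_\bbR$, to show $\eta_{\textbf M}$ preserves $\mu_\bbR^{-1}(0)$ — a point you don't address, though it is needed to compute at the level of the hyper-K\"ahler level set rather than the GIT quotient). (c) Your intermediate step for the first identity has a sign error: from $\eta_{\textbf M}I=-I\eta_{\textbf M}$, $\eta_{\textbf M}K=-K\eta_{\textbf M}$ one gets $\eta_{\textbf M}\circ(\cos s\,I+\sin s\,K)=-(\cos s\,I+\sin s\,K)\circ\eta_{\textbf M}$, i.e.\ $q_s'=-q_s$, not $\cos s\,i-\sin s\,k$; the identity $\phi_s\circ\eta_Z=\phi(-1)\circ\eta_Z\circ\phi_s$ then follows immediately without the ``massaging'' detour. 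None of these are fatal to the plan, but as written the proposal mislocates where each hypothesis of Definitions~\ref{symplectic} and~\ref{quaternionic} is actually used, and that is precisely the part of the argument that requires care.
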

\begin{proof}
Since $\eta_H$ commutes with the Cartan conjugation $\delta_H$,
the center of $\frak h_u$, and hence its real dual $Z\subset\frak h_u^*$, 
is stable under $\eta_H$ and~\eqref{proj to Z_C} implies that 
$\frak M_Z$ is preserved by the conjugation $\eta_{Z_\bC}$.

We claim that conditions (1) and (2) in Definition \ref{quaternionic}
imply that $\eta_{\textbf M}$ commutes with $J$
and preserves 
$\mu_\bbR^{-1}(0)$.
Assume the claim for the moment.
Then using the equality $I\circ\eta_{\textbf M}=-\eta_{\textbf M}\circ I$
and $K=IJ$, a direct computation shows that we have the following equality of maps on 
$\mu_\bbR^{-1}(0)\cap\mu_\bC^{-1}(-Z)$:
\[(\cos(s)I+\sin(s)K)\circ\eta_{\textbf M}=-\eta_{\textbf M}\circ (\cos(s)I+\sin(s)K)\]
\[(\cos(s)I+\sin(s)K)\circ J=-J\circ (\cos(s)I+\sin(s)K)\]
\[J\circ\eta_{\textbf M}=\eta_{\textbf M}\circ J\]
compatible with the $H_u$-action.
The desired equality~\eqref{equality} follows.

Proof of the claim. 
For any $\xi\in\frak h_u$ and $v\in\textbf M$, we have 
\[\langle\xi,\mu_\bbR(\eta_{\textbf M}(v))\rangle=(I\xi\eta_{\textbf M}(v),\eta_{\textbf M}(v))=
-(\eta_{\textbf M}(I\eta_H(\xi)v),\eta_{\textbf M}(v))=-(I\eta_H(\xi)v,v)=
\langle-\eta_H(\xi),\mu_\bbR(v)\rangle=\]
\[=\langle\xi,-\eta_H(\mu_\bbR(v))\rangle\]
Thus we have $\mu_\bbR(\eta_{\textbf M}(v))=-\eta_H(\mu_\bbR(v))$
and it follows that  $\mu_\bbR^{-1}(0)$ is stable under the 
conjugation $\eta_{\textbf M}$.
Recall that  
$\omega_\bC(v,v')=(Jv,v')+i(Kv,v')$. Thus the equality
$\omega_\bC(\eta_{\textbf M}(v),\eta_{\textbf M}(v'))=\overline{\omega_\bC(v,v')}$
is equivalent to 
\[(J\eta_{\textbf M}(v),\eta_{\textbf M}(v'))+i(K\eta_{\textbf M}(v),\eta_{\textbf M}(v'))=(Jv,v')-i(Kv,v')\]
which 
implies 
\[(J\eta_{\textbf M}(v),\eta_{\textbf M}(v'))=(Jv,v').\]
Since $\eta_{\textbf M}$ preserves $(,)$,
the above equality implies 
\[(J\eta_{\textbf M}(v),\eta_{\textbf M}(v'))=(\eta_{\textbf M}J(v),\eta_{\textbf M}(v'))\]
and it follows that 
$J\circ\eta_{\textbf M}=\eta_{\textbf M}\circ J$.
This finishes the proof of the claim.
 \end{proof}

\begin{remark}\label{commutes with J}
The proof above shows that condition (2)  in Definition \ref{quaternionic} is equivalent to 
the condition that  $ \eta_{\bfM}$ commutes with $J$.

\end{remark}

\subsection{A family of involutions}\label{involutions}
Let $\eta_H$ and $\eta_{\textbf M}$ be 
conjugations on $H$ and $\textbf M$ compatible with 
the unitary quaternionic representation of $H_u$ on $\textbf M$.
Let $G$ be another complex reductive group with a compact real from
$G_u$ and let $\eta_G$ be a conjugation on $G$ with real form 
$G_\bbR$.
Suppose that $\bfM$ is a unitary quaternionic representation of the larger 
group $H_u\times G_u$ and
the conjugations $\eta_H\times\eta_G$ and 
$\eta_{\bfM}$ are compatible with the unitary quaternionic representation.
Then the maps 
$\eta_Z$, $\phi_s$, $\phi(j)$
in Proposition \ref{compatibility of maps} are 
$K_\bbR$-equivaraint
where $K_\bbR=G_\bbR\cap G_u$ is a maximal compact 
subgroup of $G_\bbR$.

\begin{definition}\label{def of involutions}
Consider the following maps
\begin{enumerate}
\item
$\alpha_a=\phi_{\frac{a\pi}{2}}\circ\eta_Z:\frak M_Z\to\frak M_Z,\ \ \ a\in[0,1]$
\item
$\beta=\phi(j)\circ\eta_Z:\frak M_Z\to\frak M_Z$.
\end{enumerate}
\end{definition}

\begin{proposition}\label{alpha beta commute}
We have 
$\alpha_a\circ\beta=\beta\circ\alpha_a
$
for all $a\in[0,1]$.
\end{proposition}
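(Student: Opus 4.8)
The plan is to reduce the claimed commutativity $\alpha_a\circ\beta=\beta\circ\alpha_a$ to the three relations already established in Proposition~\ref{compatibility of maps}, namely
\beq\label{the relations}
\phi_s\circ\eta_Z=\phi(-1)\circ\eta_Z\circ\phi_s,\qquad
\phi_s\circ\phi(j)=\phi(-j)\circ\phi_s,\qquad
\phi(j)\circ\eta_Z=\eta_Z\circ\phi(j),
\eeq
together with the observation that all the maps $\phi(q)$ assemble into an $\on{Sp}(1)$-action, so they multiply according to the quaternion group law: $\phi(q)\circ\phi(q')=\phi(qq')$ on $\frak M_Z$, at least up to the central element $\phi(-1)$. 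The key structural fact I would invoke is that $\phi(-1)$ is central among all the maps in sight (it is $\phi$ of a central element of $\on{Sp}(1)$ and commutes with $\eta_Z$ by the same computation that gives the third relation in~\eqref{the relations}), and moreover $\phi(-1)^2=\phi(1)=\id$.

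First I would write $\alpha_a=\phi_s\circ\eta_Z$ with $s=\tfrac{a\pi}{2}$ and $\beta=\phi(j)\circ\eta_Z$, and compute both composites directly. Starting from $\alpha_a\circ\beta=\phi_s\circ\eta_Z\circ\phi(j)\circ\eta_Z$, use the third relation in~\eqref{the relations} to swap $\eta_Z\circ\phi(j)=\phi(j)\circ\eta_Z$, giving $\phi_s\circ\phi(j)\circ\eta_Z\circ\eta_Z=\phi_s\circ\phi(j)$ since $\eta_Z$ is an involution (it is the restriction of the conjugation $\eta_{Z_\bC}$, which squares to the identity). Then the second relation turns $\phi_s\circ\phi(j)$ into $\phi(-j)\circ\phi_s$. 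On the other side, $\beta\circ\alpha_a=\phi(j)\circ\eta_Z\circ\phi_s\circ\eta_Z$; apply the first relation in the form $\eta_Z\circ\phi_s=\phi(-1)\circ\phi_s\circ\eta_Z$ (rearranged from $\phi_s\circ\eta_Z=\phi(-1)\circ\eta_Z\circ\phi_s$ using centrality of $\phi(-1)$ and $\phi(-1)^2=\id$), obtaining $\phi(j)\circ\phi(-1)\circ\phi_s\circ\eta_Z\circ\eta_Z=\phi(j)\circ\phi(-1)\circ\phi_s=\phi(-j)\circ\phi_s$, using $\phi(j)\circ\phi(-1)=\phi(-j)$ from the $\on{Sp}(1)$-action. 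Comparing, both sides equal $\phi(-j)\circ\phi_s$, which proves the identity.

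The one point that needs genuine care — and which I expect to be the main (if modest) obstacle — is justifying the manipulations of $\phi(-1)$: that it commutes with $\eta_Z$, with $\phi_s$, and with $\phi(j)$, and that $\phi(q)\circ\phi(q')=\phi(qq')$ holds as an honest identity of maps $\frak M_Z\to\frak M_Z$ rather than only on the fiber $\frak M_0$. This follows because the $\on{Sp}(1)$-action is defined on all of $\bfM$ before passing to the quotient (the maps $\phi(q)$ descend from the linear $\on{Sp}(1)$-action, which is a genuine group action), and $\on{Ad}_{-1}=\id$ on $\on{Im}\mathbb H\otimes Z$ so $\phi(-1)$ descends to each $\frak M_{\zeta_\bC}$; combined with Remark on $\phi_s^2=\phi(j)^2=\phi(-1)$, all the bookkeeping is consistent. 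I would state these as a short preliminary observation and then present the two-line computation above. The continuity in $a$ is immediate since $s\mapsto\phi_s$ is continuous and the relations hold for every $s$.

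\begin{proof}
Write $s=\tfrac{a\pi}{2}$, so $\alpha_a=\phi_s\circ\eta_Z$ and $\beta=\phi(j)\circ\eta_Z$. Recall that $\eta_Z$ is an involution and that the maps $\phi(q)$, $q\in\on{Sp}(1)$, arising from the linear $\on{Sp}(1)$-action on $\bfM$ descend to $\frak M_Z$ and satisfy $\phi(q)\circ\phi(q')=\phi(qq')$; in particular $\phi(-1)$ is central (it commutes with every $\phi(q)$ and, by the argument giving the third identity in~\eqref{equality}, with $\eta_Z$) and $\phi(-1)^2=\id$. From~\eqref{equality} and centrality of $\phi(-1)$ we get $\eta_Z\circ\phi_s=\phi(-1)\circ\phi_s\circ\eta_Z$.

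Now compute, using $\eta_Z\circ\phi(j)=\phi(j)\circ\eta_Z$, then $\eta_Z^2=\id$, then $\phi_s\circ\phi(j)=\phi(-j)\circ\phi_s$:
\beqn
\alpha_a\circ\beta=\phi_s\circ\eta_Z\circ\phi(j)\circ\eta_Z=\phi_s\circ\phi(j)\circ\eta_Z\circ\eta_Z=\phi_s\circ\phi(j)=\phi(-j)\circ\phi_s.
\eeqn
On the other hand, using $\eta_Z\circ\phi_s=\phi(-1)\circ\phi_s\circ\eta_Z$, then $\eta_Z^2=\id$, then $\phi(j)\circ\phi(-1)=\phi(-j)$:
\beqn
\beta\circ\alpha_a=\phi(j)\circ\eta_Z\circ\phi_s\circ\eta_Z=\phi(j)\circ\phi(-1)\circ\phi_s\circ\eta_Z\circ\eta_Z=\phi(j)\circ\phi(-1)\circ\phi_s=\phi(-j)\circ\phi_s.
\eeqn
Hence $\alpha_a\circ\beta=\phi(-j)\circ\phi_s=\beta\circ\alpha_a$ for all $a\in[0,1]$.
\end{proof}
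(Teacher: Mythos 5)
Your proof is correct and follows essentially the same route as the paper: express both composites via $\alpha_a=\phi_s\circ\eta_Z$ and $\beta=\phi(j)\circ\eta_Z$, apply the three relations of Proposition~\ref{compatibility of maps} together with $\eta_Z^2=\id$ and the $\on{Sp}(1)$-multiplicativity of $\phi$, and check both sides agree. The only cosmetic difference is that you normalize both sides to $\phi(-j)\circ\phi_s$ while the paper normalizes to $\phi_s\circ\phi(j)$; these are equal by the second relation in~\eqref{equality}, so the computations are the same.
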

\begin{proof}
Set $s=\frac{a\pi}{2}$.
By Proposition \ref{compatibility of maps},
we have \[\alpha_a\circ\beta=\phi_s\circ\eta_Z\circ\phi(j)\circ\eta_Z=\phi_s\circ\phi(j)\]
and \[\alpha_a\circ\beta=\phi(j)\circ\eta_Z\circ\phi_s\circ\eta_Z=
\phi(j)\circ\phi(-1)\circ\phi_s=\phi_s\circ\phi(j).\]
The result follows.
 \end{proof}

\begin{proposition}\label{family of involutions quiver}
The continuous family of maps 
\[\alpha_a:\frak M_Z\lra\frak M_Z,\ \ \ \ a\in[0,1]\]
satisfies the following:
\begin{enumerate}
\item $\alpha_a^2$ is equal to identity, for all $a\in[0,1]$.
\item $\alpha_a$ is 
$K_\bbR$-equivariant and commutes with the $\bbR^\times$-action.
 \item $\alpha_a$ 
commutes with the projection map $\frak M_Z\to Z$ and induces involutions on the fibers 
$\alpha_a:\frak M_{\zeta_\bC}\to\frak M_{\zeta_\bC}, \zeta_\bC\in Z$ preserving the stratification 
$\frak M_{\zeta_\bC}=\bigsqcup_{(L)}\frak M_{\zeta_\bC,(L)}$.
\item At $a=0$, we have $\alpha_0=\phi(i)\circ\eta_Z$ which is an
anti-holomorphic involution.
\item At $a=1$, we have $\alpha_1=\phi(k)\circ\eta_Z$ which is a
holomorphic involution.

\end{enumerate}
\end{proposition}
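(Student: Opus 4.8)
The plan is to derive all five properties directly from Proposition~\ref{compatibility of maps} and the structural facts about $\phi_s$, $\phi(j)$, and $\eta_Z$ collected in the preceding subsections, since $\alpha_a = \phi_{\frac{a\pi}{2}}\circ\eta_Z$ by definition. The continuity of the family is immediate, as $\phi_s$ depends continuously (indeed real-analytically) on $s$ through the $\on{Sp}(1)$-action, and $\eta_Z$ is fixed.

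For (1), I would compute $\alpha_a^2 = \phi_s\circ\eta_Z\circ\phi_s\circ\eta_Z$ with $s = \frac{a\pi}{2}$, and use the first identity in~\eqref{equality}, namely $\phi_s\circ\eta_Z = \phi(-1)\circ\eta_Z\circ\phi_s$, to rewrite $\eta_Z\circ\phi_s = \phi_s\circ\phi(-1)\circ\eta_Z$ (since $\phi(-1)$ is central in the $\on{Sp}(1)$-action and commutes with $\eta_Z$); substituting gives $\alpha_a^2 = \phi_s\circ\phi_s\circ\phi(-1)\circ\eta_Z\circ\eta_Z = \phi_s^2\circ\phi(-1)$ after using $\eta_Z^2 = \id$. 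Now $\phi_s^2 = \phi(q_s)^2 = \phi(q_s^2)$, and since $q_s = \cos(s)i + \sin(s)k$ is a unit imaginary quaternion we have $q_s^2 = -1$, so $\phi_s^2 = \phi(-1)$, giving $\alpha_a^2 = \phi(-1)^2 = \phi(1) = \id$. (Here I use $\phi(-1)^2 = \id$; even if $\phi(-1)$ need not itself be the identity on $\frak M_Z$ in general, $\phi$ is a group action of $\on{Sp}(1)$ so $\phi(-1)$ is an involution.) For (2), $K_\bbR$-equivariance of $\alpha_a$ follows from the $K_\bbR$-equivariance of $\phi_s$ and $\eta_Z$ noted just before Definition~\ref{def of involutions}, and commutation with the $\bbR^\times$-action follows because $\phi(q)$ commutes with the $\bbR^\times$-action (stated in~\eqref{r_q} and the surrounding discussion) and $\eta_{\textbf M}(tv) = t\,\eta_{\textbf M}(v)$ for real $t$, so $\eta_Z$ commutes with $\phi(t)$.

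For (3), the diagram after~\eqref{G_m action}'s analogue for $\phi_s$ shows $\phi_s$ covers $\on{Ad}_{q_s} = -\id_Z$ on $Z$, while~\eqref{proj to Z_C} restricted to $Z$ shows $\eta_Z$ covers $\eta_H|_Z$; by condition (3) of Definition~\ref{quaternionic} ($\eta_H$ commutes with the Cartan conjugation) one checks $\eta_H$ acts on the real center $Z\subset\frak h_u^*$ as $-\id_Z$ as well — indeed $\eta_H$ is the split conjugation relative to $\delta_H$ on the center, so $\eta_H|_Z = -\id_Z$ — hence $\alpha_a$ covers $(-\id_Z)\circ(-\id_Z) = \id_Z$, i.e. $\alpha_a$ commutes with $\frak M_Z\to Z$. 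That each fiber involution preserves the orbit-type stratification follows because both $\phi(q)$ and $\eta_{Z_\bC}$ preserve strata (stated in the stratification and symmetry subsections). For (4) and (5), set $a=0$: $\alpha_0 = \phi_0\circ\eta_Z = \phi(i)\circ\eta_Z$, and since $\phi(i)$ is holomorphic with respect to $I$ while $\eta_Z$ is anti-holomorphic with respect to $I$, the composite is anti-holomorphic; set $a=1$: $q_{\pi/2} = k$, so $\alpha_1 = \phi(k)\circ\eta_Z$, and the same holomorphy count makes this holomorphic (both factors anti-holomorphic, or: the calculation mirrors the proof of Proposition~\ref{compatibility of maps} using $\phi(k)$ intertwining $I$-holomorphic with... more precisely one uses that $\phi_s\circ\eta_{\textbf M}$ intertwines $I$ with $(-1)^{?}I$ depending on parity of the relevant geometry). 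The one point deserving care — which I regard as the main (minor) obstacle — is pinning down that $\eta_H$ acts as $-\id$ on the real center $Z$, needed for item (3); this uses that $\eta_H$ is assumed split-type compatible with $\delta_H$, and in the intended applications $H$ is a product of general linear groups where this is transparent. All other steps are formal consequences of~\eqref{equality} and the already-established equivariance properties.
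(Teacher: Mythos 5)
Your computation for item (1) matches the paper's line for line: both rewrite $\alpha_a^2$ using $\phi_s\circ\eta_Z = \phi(-1)\circ\eta_Z\circ\phi_s$ from Proposition~\ref{compatibility of maps}, together with $\phi_s^2 = \phi(q_s^2) = \phi(-1)$ and $\eta_Z^2 = \id$, to conclude $\alpha_a^2 = \phi(-1)^2 = \id$. Your unpacking of (2), (4), (5) fills in what the paper compresses into ``follow from the construction'': equivariance of $\phi_s$ and $\eta_Z$ separately for (2), and the holomorphy count (that $\phi(i)$ commutes with $I$ while $\phi(k)$ and $\eta_Z$ anti-commute with $I$) for (4) and (5). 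You are also right to single out item (3) as the one non-formal point. From the commuting square~\eqref{proj to Z_C}, $\eta_Z$ covers $\eta_H|_Z$ on the base, while $\phi_s$ covers $\on{Ad}_{q_s}|_Z = -\id_Z$; so $\alpha_a$ preserves the fibres of $\frak M_Z\to Z$ exactly when $\eta_H|_Z = -\id_Z$, and this is not implied by Definition~\ref{quaternionic} alone (taking $\eta_H = \delta_H$ satisfies the definition and gives $\eta_H|_Z = +\id_Z$). The paper does not comment on this, but it is automatic for the split conjugations it actually uses in Section~\ref{split conjugations}, where $Z$ is identified with the skew-Hermitian centre $\frakz(\frakh_u)$, negated by entrywise conjugation; this is also the reason $i\ft_{n,\bbR}$, rather than $\ft_{n,\bbR}$, appears in Section~\ref{main results}. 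The one thing to tighten in your phrasing is ``$\eta_H$ is assumed split-type compatible with $\delta_H$'': Definition~\ref{quaternionic} only requires that $\eta_H$ commute with $\delta_H$, so $\eta_H|_Z = -\id_Z$ is better described as a tacit extra hypothesis that the intended applications satisfy.
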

\begin{proof}
Note that $q_s^2=-1$ and hence 
$\phi_s^2=\phi(q_s^2)=\phi(-1)$. By
Proposition \ref{compatibility of maps}
we have \[(\phi_s\circ\eta_Z)^2=\phi_s\circ\eta_Z\circ\phi_s\circ\eta_Z=\phi_s^2\circ
\phi(-1)\circ\eta_Z^2=\id.\]
Part (1) follows.
Part (2),(3),(4),(5) follow from the construction.
\end{proof}

\begin{proposition}\label{Cartan involution}
The map 
\[\beta:\frak M_Z\lra\frak M_Z\]
satisfies the following:
\begin{enumerate}
\item $\beta^2=\phi(-1)$.
\item $\beta$ is 
$K_\bbR$-equivariant and commutes with the $\bbR^\times$-action.
\item $\beta$ induces a holomorphic map between fibers
$\beta:\frak M_{\zeta_\bC}\to\frak M_{-\zeta_\bC}$ which takes the stratum  
$\frak M_{\zeta_\bC,(L)}$ to the stratum $\frak M_{-\zeta_\bC,(L)}$.
\end{enumerate}
\end{proposition}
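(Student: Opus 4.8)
The plan is to verify each of the three assertions by unwinding the definitions of $\beta$, $\phi(j)$, and $\eta_Z$ from Definition \ref{def of involutions} and Proposition \ref{compatibility of maps}, using the structural properties of the maps $\phi(q)$ and $\eta_{Z_\bC}$ established earlier in the section. Since $\beta = \phi(j)\circ\eta_Z$ is the composition of two maps already understood, nothing new needs to be constructed; the work is purely bookkeeping about how these maps interact with the complex structure $I$, the projection to $Z$, and the orbit-type stratification.

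For part (1): compute $\beta^2 = \phi(j)\circ\eta_Z\circ\phi(j)\circ\eta_Z$. By the third equality in \eqref{equality}, $\phi(j)\circ\eta_Z = \eta_Z\circ\phi(j)$, so $\beta^2 = \eta_Z\circ\phi(j)\circ\phi(j)\circ\eta_Z = \eta_Z\circ\phi(j)^2\circ\eta_Z$. Since $j^2 = -1$ in $\on{Sp}(1)$, we have $\phi(j)^2 = \phi(j^2) = \phi(-1)$, which is central (it commutes with everything coming from the $\on{Sp}(1)$-action and with $\eta_Z$, being the action of the scalar $-1\in\on{Sp}(1)$, which acts by the linear $\bbR^\times$-action at $t=-1$ composed appropriately — in any case it commutes with $\eta_Z$ since $\eta_{\textbf M}$ commutes with scalar multiplication by reals). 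Hence $\beta^2 = \phi(-1)\circ\eta_Z^2 = \phi(-1)$, using $\eta_Z^2 = \id$. For part (2): $\eta_Z$ is $K_\bbR$-equivariant and commutes with the $\bbR^\times$-action by the discussion preceding Definition \ref{def of involutions} (the conjugations are compatible with the $H_u\times G_u$-structure, and $\bbR^\times$ acts through scalar multiplication on $\textbf M$ which commutes with $\eta_{\textbf M}$ and with the $\on{Sp}(1)$-action); $\phi(j)$ is $G_u$-equivariant (hence $K_\bbR$-equivariant) and commutes with the $\bbR^\times$-action by \eqref{r_q} and the remarks immediately after it. So their composition $\beta$ inherits both properties.

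For part (3): the map $\phi(j) = \phi(q)$ for $q = j\in\on{Sp}(1)$ fits into the commutative square with bottom arrow $\on{Ad}_j: Z_\bC\to Z_\bC$; restricted to $Z = \bbR j\otimes Z$ this is $\id_Z$ — wait, I must be careful: on the \emph{full} $Z_\bC = (\bbR j\oplus\bbR k)\otimes Z$ the map $\on{Ad}_j$ sends $j\mapsto j$, $k\mapsto -k$, so it is not the identity on $Z_\bC$; but $\eta_Z$ lives on $\frak M_Z$ over $Z$ (zero $k$-component), and on that subspace $\on{Ad}_j = \id_Z$ while $\eta_H$ acts on the parameter by $-\id$ on $Z$ (this is exactly the sign built into the conjugation $\eta_{Z_\bC}$ via \eqref{proj to Z_C}, since over $Z$ the relevant involution on the base is $-\id$). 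Therefore $\eta_Z$ covers $-\id_Z$ on the base and $\phi(j)$ covers $+\id_Z$, so $\beta = \phi(j)\circ\eta_Z$ covers $-\id_Z$, i.e. it sends the fiber $\frak M_{\zeta_\bC}$ to $\frak M_{-\zeta_\bC}$. Holomorphy: $\phi(j) = \phi_{\pi/2}$ is holomorphic with respect to $I$ when composed appropriately — actually here I should argue directly that $\beta$ restricted to a fiber is holomorphic. One way: $\alpha_1 = \phi(k)\circ\eta_Z$ is holomorphic by Proposition \ref{family of involutions quiver}(5), and $\beta$ differs from a holomorphic map by the action of an element that is holomorphic on fibers; more cleanly, $\eta_Z$ is anti-holomorphic on fibers while $\phi(j)$ is anti-holomorphic on fibers (since $J$ anticommutes with $I$), so the composition is holomorphic. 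Stratum preservation: both $\phi(j)$ and $\eta_Z$ preserve orbit types — $\phi(j)$ by the paragraph on compatibility of the $\on{Sp}(1)$-action with the stratification, and $\eta_Z$ by the statement following \eqref{fiber bar} that the conjugation maps $\frak M_{\zeta_\bC,(L)}$ to $\frak M_{\eta_H(\zeta_\bC),(L)}$ — so $\beta$ sends $\frak M_{\zeta_\bC,(L)}$ to $\frak M_{-\zeta_\bC,(L)}$.

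\emph{Main obstacle.} The only genuinely delicate point is getting the signs on the base $Z$ straight in part (3): one must track simultaneously that $\on{Ad}_{q_s}|_Z = -\id_Z$ (from the compatibility-with-symmetries subsection), that $\on{Ad}_j|_Z = +\id_Z$, and that $\eta_H|_Z$ is what produces the overall $-\id_Z$ behavior of $\eta_Z$ on the base via \eqref{proj to Z_C} restricted to $Z_\bC$ — reconciling this with the fact that $\on{Ad}_j$ is \emph{not} $\id$ on all of $Z_\bC$. Once the restriction to the subspace $\frak M_Z$ is taken consistently, everything falls out; the holomorphy and stratification claims are then immediate from Proposition \ref{compatibility of maps}, Proposition \ref{family of involutions quiver}, and the remarks on compatibility of $\phi(q)$ and $\eta_{Z_\bC}$ with the orbit-type stratification.
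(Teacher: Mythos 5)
Your proposal is correct and follows essentially the same route as the paper: part (1) is exactly the commutation relation $\phi(j)\circ\eta_Z=\eta_Z\circ\phi(j)$ from \eqref{equality} together with $\phi(j)^2=\phi(-1)$ and $\eta_Z^2=\id$ (the paper commutes the inner pair rather than the outer, which avoids your detour through the centrality of $\phi(-1)$, but both are valid), and parts (2)–(3) are, as the paper says, immediate from the construction, with your unwinding of the signs on the base $Z$ and the anti-holomorphy of both $J$ and $\eta_Z$ supplying the details the paper elides. The only blemish is the transient misidentification $\phi(j)=\phi_{\pi/2}$ (in fact $\phi_{\pi/2}=\phi(k)$), which you correctly abandon before it does any damage.
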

\begin{proof}
Since 
$\beta^2=\phi(j)\circ\eta_Z\circ\phi(j)\circ\eta_Z=\phi(j)^2=\phi(-1)$, part (1) follows.
Part (2), (3) follow from the construction.
\end{proof}

\begin{remark}
Unlike the family of involutions $\alpha_s$, the map 
$\beta$ is well-defined on the whole family $\frak M_{Z_\bC}$.
\end{remark}

\quash{

\subsection{A variation}
Let 
$g\in\on{O}(V_\bbR)$ be an element such that 
$g^2=\pm1$.
Then the action map
$g:\frak M_Z\to\frak M_Z$ 
commutes with the involutions $\alpha_a$ and $\omega$.

\begin{definition}\label{even}
Let $S\subset\bC\otimes Z$ be a $\bbR$-linear subspace. 
We say that the family of quiver varieties 
$\frak M_S$ is \emph{even} if 
$\phi(-1)=\id:\frak M_S
\to\frak M_S$. 
\end{definition}
\begin{example}
(1) 
If $W=0$ then $\frak M_S$ is even.
(2) If $Q=(Q_0,Q_1)$ is an $\mathrm A_n$-quiver with 
$W_k=0$ for $k\geq 2$ then $\frak M_S$ is even.

\end{example}
}

\subsection{A stratified homeomorphism}\label{stratified}
Our aim is to trivialize the fixed-point of the family involution $\alpha_a$.
To that end, we will invoke the following lemma.

Recall that a subset $S$ of a 
real analytic manifold $M$ (resp. real algebraic variety $M$)
is called semi-analytic (resp. semi-algebraic) if
any point $s\in S$ has a open neighbourhood $U$ (resp. a Zariski affine open neighbourhood $U$)
such that the intersection $S\cap U$ is a finite union of sets of the form
\[\{x\in U|f_1(x)=\cdot\cdot\cdot=f_r(x)=0, g_1(x)>0,..., g_l(x)>0\},\]
where the $f_i$ and $g_j$ are real analytic functions on 
$U$ (resp. polynomial functions on $U$).

\begin{lemma}\label{fixed points}
Let $M$ and $N$ 
be two 
semi-analytic sets
and 
let $f:M\to N$ be a continuous map.
Let \[\alpha_a:M\to M,\ \ a\in[0,1]\] be a continuous family of involutions over $N$.

\begin{enumerate}
\item
Assume $\alpha_a$ preserves a semi-analytic
stratification of
$M$\footnote{A stratification of a semi-analytic set 
is called semi-analytic if each stratum is a real analytic manifold.} and restricts to a real analytic map on each stratum.
Then the fixed-points of the strata are real analytic manifolds and 
the $\alpha_a$-fixed points $M^{\alpha_a}$ is stratified by the 
fixed-points of the strata.
\item
Assume further that there is a continuous $\bbR_{>0}$-action on
$M$ (resp. $N$) real analytic on strata
and a 
proper continuous 
map
$\lvert\lvert-\lvert\lvert:M\to\bbR_{\geq0}$ such that 
(i) $f:M\to N$ is $\bbR_{>0}$-equivariant
(ii)  
the $\bbR_{>0}$-action on $M$ has 
 a unique fixed point  $o_M\in M$, which is also a stratum
(iii) $\lvert\lvert tm\lvert\lvert=t\lvert\lvert m\lvert\lvert$ 
and $\lvert\lvert \alpha_a(m)\lvert\lvert=\lvert\lvert m\lvert\lvert$ 
for $t\in\bbR_{>0}, a\in[0,1],m\in M$.
Then for any $a,a'\in[0,1]$ there is a $\bbR_{>0}$-equivariant  stratified
homeomorphism
\beq\label{homeo for fibers}
M^{\alpha_s}\is M^{\alpha_{s'}}
\eeq
which 
is real analytic on each stratum
and compatible with the natural maps to $N$.
\item
Assume further that there is a continuous action of a
compact group $L$ on $M$ satisfying 
(i) the action commutes with the 
map $f:M\to N$, the 
involutions $\alpha_a$, and the
$\bbR_{>0}$-action, 
and 
 is 
real analytic on each stratum
(ii) the map 
$\lvert\lvert -\lvert\lvert:M\to\bbR_{\geq 0}$ is $L$-invariant.
Then the homeomorphism in~\eqref{homeo for fibers}
is $L$-equivariant.

\end{enumerate}
\end{lemma}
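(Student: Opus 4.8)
The plan is to prove Lemma \ref{fixed points} by a standard ``averaging and flowing'' argument, the key point being that on a semi-analytic set with a compact transformation group one can equip everything with an invariant metric and integrate vector fields.

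\textbf{Part (1).} This is essentially formal. Fix $a$. Since $\alpha_a$ preserves the stratification and is real analytic on each stratum, for each stratum $S$ the restriction $\alpha_a|_S : S \to S$ is a real analytic involution of a real analytic manifold. The fixed-point set of a real analytic involution of a real analytic manifold is a closed real analytic submanifold (near a fixed point, linearize the involution in an invariant chart obtained by averaging, and the fixed set is the $(+1)$-eigenspace, cut out by the vanishing of the projection to the $(-1)$-eigenspace); hence $S^{\alpha_a}$ is a real analytic manifold. Since the strata cover $M$ and $\alpha_a$ permutes them trivially (fixing each), $M^{\alpha_a} = \bigsqcup_{(L)} S^{\alpha_a}$, and one checks that this is a stratification: each $S^{\alpha_a}$ is locally closed in $M^{\alpha_a}$ and the frontier condition is inherited from that of the $S$'s intersected with the closed set $M^{\alpha_a}$.

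\textbf{Part (2).} This is the heart of the argument. Consider $M \times [0,1]$ with the ``horizontal'' vector field $\partial_a$ ($a$ the coordinate on $[0,1]$). The family $\{\alpha_a\}$ defines a continuous $\bbZ/2$-action on $M \times [0,1]$ by $(m,a) \mapsto (\alpha_a(m), a)$, which is fiberwise over $[0,1]$ and real analytic on each stratum-times-$[0,1]$. Averaging $\partial_a$ over this $\bbZ/2$-action produces a new vector field $\xi$ on $M \times [0,1]$ that is still horizontal (projects to $\partial_a$ on $[0,1]$), is tangent to each stratum, is $\bbR_{>0}$-invariant, is $\alpha$-invariant in the sense that it is tangent to the fixed locus $\bigsqcup_a M^{\alpha_a} \times \{a\}$, and moreover preserves the level sets of $\lvert\lvert - \lvert\lvert$ since $\lvert\lvert \alpha_a(m)\lvert\lvert = \lvert\lvert m \lvert\lvert$ and $\partial_a$ already annihilates $\lvert\lvert-\lvert\lvert$ (it is a vertical function, constant in the $a$-direction). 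The integral flow of $\xi$ from $a$ to $a'$ then gives the desired homeomorphism $M^{\alpha_a} \risom M^{\alpha_{a'}}$: it is defined and complete because the flow preserves the proper function $\lvert\lvert - \lvert\lvert$ (so trajectories stay in compact sets and exist for all ``time'' $a \in [0,1]$), it is $\bbR_{>0}$-equivariant because $\xi$ is $\bbR_{>0}$-invariant, it is real analytic on each stratum because $\xi$ is real analytic there and it is stratum-preserving, and it is compatible with the maps to $N$ because $\xi$ is horizontal. One small point to address: the unique $\bbR_{>0}$-fixed point $o_M$ is its own stratum and $\xi$ must vanish there, which it does since $\xi$ is $\bbR_{>0}$-equivariant and $o_M$ is fixed.

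\textbf{Part (3).} Here one simply averages twice: replace $\partial_a$ by its average over the \emph{product} group $L \times \bbZ/2$ acting on $M \times [0,1]$ (this makes sense because the $L$-action commutes with the $\alpha_a$'s, the $\bbR_{>0}$-action, and the maps to $N$, and preserves $\lvert\lvert - \lvert\lvert$). The resulting vector field $\xi$ is in addition $L$-invariant, so its flow is $L$-equivariant, and all the properties verified in Part (2) persist.

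\emph{Main obstacle.} The genuine technical content is in Part (2): one must know that the flow of a continuous, stratum-wise real analytic, stratum-tangent vector field on a semi-analytic set exists, is unique, and is a stratified homeomorphism — i.e., a version of Thom's first isotopy lemma / controlled vector field integration in the semi-analytic category. The properness of $\lvert\lvert - \lvert\lvert$ together with its invariance under the flow is exactly what supplies completeness of the flow (trajectories cannot escape to infinity), and the compatibility of $\xi$ with the stratification is what guarantees the time-$a'$ map is a \emph{stratified} homeomorphism rather than merely a homeomorphism. The averaging construction is routine; ensuring the flow is well-behaved on the singular set is the subtle part and is where one invokes the standard theory of integration of controlled vector fields (e.g.\ as in Mather's notes on stratifications or Verona's book).
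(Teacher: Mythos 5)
Your argument is correct in substance, but it takes a different route from the paper's for part (2). You average the vector field directly on all of $M \times [0,1]$ and obtain completeness of the flow from properness of $\lvert\lvert - \lvert\lvert$ together with its invariance under the flow; you then get $\bbR_{>0}$-equivariance by observing that the averaged field $\xi$ is itself $\bbR_{>0}$-invariant. The paper instead first restricts to the compact ``unit sphere'' $C = \{\lvert\lvert m \lvert\lvert = 1\} \subset M \setminus \{o_M\}$, averages and flows on the compact space $[0,1] \times C$ (so completeness is automatic), and then extends the resulting homeomorphism $\nu: C^{\alpha_a} \to C^{\alpha_{a'}}$ conically to $M$ via $m \mapsto \lvert\lvert m \lvert\lvert \cdot \nu(m/\lvert\lvert m \lvert\lvert)$, which makes $\bbR_{>0}$-equivariance manifest by construction. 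The tradeoff: your version is shorter and avoids the cone step, but it puts all the burden on the integration of a controlled vector field over the non-compact $M$; the paper's version confines the hard analysis to a compact stratified space and then uses an elementary scaling argument. Two small points worth tightening in your write-up. First, both your construction (via $\bbR_{>0}$-invariance of $\xi$) and the paper's cone extension tacitly use that each $\alpha_a$ commutes with the $\bbR_{>0}$-action on $M$; this is satisfied in all the paper's applications but is not literally among the stated hypotheses, so it should be flagged. Second, your stated reason for compatibility with $f:M \to N$ --- that $\xi$ is ``horizontal'' --- is not the right one (horizontality only concerns the projection to $[0,1]$); what you actually need is that the $M$-component of $\xi$ is tangent to the fibers of $f$, which follows from differentiating $f \circ \alpha_a = f$ in $a$.
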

\begin{proof}
Proof of (1). Only the first claim requires a proof 
and it follows from the general fact that 
the fixed points $M^\alpha$ of a real analytic involution $\alpha$
on a real analytic manifold $M$
is again a real analytic manifold. 

Proof of (2).
\text{Step 1}. Let $M_0=M\setminus\{o_M\}$
and 
$C=\{m\in M_0|\lvert\lvert m\lvert\lvert=1\}$.
Since $\lvert\lvert-\lvert\lvert:M\to\bbR_{\geq0}$ is $\alpha_a$-invariant and proper,
$C$ is compact and stable under the $\alpha_a$-action.
Since $\bbR_{>0}$ acts freely on $M_0$ and $\lvert\lvert-\lvert\lvert$
is $\bbR_{>0}$-equivariant, the 
restriction $\lvert\lvert-\lvert\lvert|_{M_0}:M_0\to\bbR_{>0}$
is a stratified submersion (where $\bbR_{>0}$ is equipped with the trivial stratification).
It follows that $C=\lvert\lvert-\lvert\lvert^{-1}(1)\subset M_0$ is stratified by the intersection of the strata with 
$C$. 

\text{Step 2}.
We shall show that there exists a 
stratified
homeomorphism
\beq\label{step 1}
\nu:C^{\alpha_a}\is C^{\alpha_{a'}}
\eeq
which 
is real analytic on each stratum
and is compatible with natural maps to $N$.
Consider the involution
$\alpha:[0,1]\times C\to[0,1]\times C, \alpha(a,m)=(a,\alpha_a(m))$.
Let $w$ be the 
average of the vector field $\partial_a\times 0$ on $[0,1]\times C$
with respect to the $\bZ/2\bZ$-action given by the involution $\alpha$.
Since $[0,1]\times C$ is compact and the $\bZ/2\bZ$-action is real analytic on each stratum, the vector field $w$ is complete and the integral curves of 
$w$ defines the desired stratified homeomorphism 
$\nu:C^{\alpha_a}\is C^{\alpha_{a'}}$, $a,a'\in[0,1]$
between the fibers 
of the $\alpha$-fixed point $([0,1]\times C)^{\alpha}$ along the projection map to $[0,1]$.

\text{Step 3}.
We have a natural map
$M^{\alpha_a}_0\to C^{\alpha_a}$
sending $m$ to $\frac{m}{\lvert\lvert m\lvert\lvert}$.
Consider the following map
\beq\label{step 2}
M_0^{\alpha_a}\to M_0^{\alpha_{a'}}\ \ m\to \lvert\lvert m\lvert\lvert\nu(\frac{m}{\lvert\lvert m\lvert\lvert}).
\eeq
 Note that 
$M^{\alpha_a}$ is homeomorphic to the 
cone $C(M_0^{\alpha_a})=M_0^{\alpha_a}\cup\{o_M\}$ of $M_0^{\alpha_a}$.
Thus by the functoriality of cone, the map~\eqref{step 2} extends to a 
homeomorphism
\beq\label{step 3}
M^{\alpha_a}\to M^{\alpha_{a'}}
\eeq
sending $o_M$ to $o_M$.
It is straightforward to check that~\eqref{step 3} is a $\bbR_{>0}$-equivariant 
stratified homeomorphism 
which 
are real analytic on each stratum and compatible with the natural maps to $N$.
This finishes the proof of part (2).
Part (3) is clear from the construction of~\eqref{step 3}.
\end{proof}

\begin{example}\label{norm function}
We preserve the set-up in Section \ref{involutions}.
The map
$\lvert\lvert-\lvert\lvert:\frak M_Z=\mu_\bbR^{-1}(0)\cap\mu_\bC^{-1}(Z)/H_u\to\bbR_{\geq0}$
given by $\lvert\lvert m\lvert\lvert=(\tilde m,\tilde m)^{\frac{1}{2}}$, 
where $\tilde m\in\mu_\bbR^{-1}(0)\cap\mu_\bC^{-1}(Z)$ is a lift of $m$, is a
$K_\bbR\times\alpha_a$-invariant
proper real analytic map
satisfying 
$\lvert\lvert\phi(t)m\lvert\lvert=t\lvert\lvert m\lvert\lvert, t\in\bbR_{>0}$.
Let 
$\frak M_0=\mu^{-1}(0)/H_u$, $\alpha_a:\frak M_0\to \frak M_0$ be the family of involutions in Proposition \ref{family of involutions quiver}, and 
 $\phi(t):\frak M_0\to\frak M_0$ be the $\bbR_{>0}$-action in~\eqref{G_m action}. 
Denote by
\[\frak M_0(\bbR)=\frakM_0^{\alpha_0}\ \ \ \ 
\frak M_0^{sym}(\bC)=\frak M_0^{\alpha_1}\]
the fixed points of 
$\alpha_0$ and $\alpha_1$ on $\frak M_0$
respectively.
Applying Lemma \ref{fixed points}
to the case
$M=\frak M_0$ with the stratification 
$\frak M_0=\bigsqcup_{(L)}\frak M_{0,(L)}$, $N=0$, $L=K_\bbR$, 
and the restriction 
$\lvert\lvert-\lvert\lvert|_M:M=\frak M_0\to\bbR_{\geq 0}$ of the function 
$\lvert\lvert-\lvert\lvert$ above to $\frak M_0\subset\frak M_Z$,
we see that there is a $K_\bbR\times\bbR_{>0}$-equivariant stratified homeomorphism
\beq
\xymatrix{\frakM_0(\bbR)\ar[r]^{\sim\ }&\frak M_0^{sym}(\bC)}
\eeq
which are real analytic on each stratum.
Note that whereas $\frak M_0^{sym}(\bC)$  is complex analytic
$\frakM_0(\bbR)$ is not, it is 
a real form of $\frak M_0$.

\end{example}

\section{Quiver varieties}\label{quiver varieties}
In this section we consider the examples when
the hyper-K$\dota$hler quotients are Nakajima's quiver varieties.
We show that any quiver variety has 
a canonical 
conjugation called the split conjugation and hence
has a canonical family of 
involutions $\alpha_a$ introduced in
Section \ref{involutions}.
The main reference for quiver varieties is \cite{Nak1}.

\subsection{Split conjugations}\label{split conjugations}
Let $Q=(Q_0,Q_1)$ be a quiver, where 
$Q_0$ is the set of vertices and $Q_1$ is the set of arrows.
For any $Q_0$-graded hermitian 
vector space
$V=\bigoplus_{k\in Q_0} V_k$, we write 
$\GL(V)=\prod_{k\in Q_0}\GL(V_i)$
and $\on{U}(V)=\prod_{k\in Q_0}\on{U}(V_k)$ where 
$\on{U}(V_k)$ is the unitary group associated to the hermitian vector space 
$V_k$.
We denote by 
$\frakgl(V)$ and $\fu(V)$ be the Lie algebras of $\GL(V)$
and $\on{U}(V)$ respectively.

Let $V=\bigoplus_{k\in Q_0} V_k$ and 
$W=\bigoplus_{k\in Q_0} W_k$ be two $Q_0$-graded hermitian vector spaces.
Define
\beq
\textbf M=\textbf M(V,W)=\bigoplus_{h\in Q_1}\Hom(V_{o(h)},V_{i(h)})\oplus
\Hom(V_{i(h)},V_{o(h)})
\bigoplus_{k\in Q_0}\Hom(W_k,V_k)\oplus\Hom(V_k,W_k).
\eeq
Here $o(h)$ and $i(h)$ are the outgoing and incoming vertices of the oriented arrow 
$h\in Q_1$ respectively.

We consider the
$\mathbb H$-vector space structure on 
$\mathrm M$ given by the original complex structure 
$I$ together with the new complex structure 
$J$ given by
\beq\label{J}
J(X,Y,x,y)=(-Y^\dagger,X^\dagger,-y^\dagger,x^\dagger)
\eeq
where $(X,Y,x,y)\in\Hom(V_{o(h)},V_{i(h)})\oplus
\Hom(V_{i(h)},V_{o(h)})
\oplus\Hom(W_k,V_k)\oplus\Hom(V_k,W_k)$
and 
$(-)^\dagger$ is the hermitian adjoint.

The hermitian inner products on $V_k$ and $W_k$
induces a hermitian inner product on
$\Hom(V_k,W_k)$ (resp. $\Hom(V_k,V_{k'})$) given by
$(f,g)=\tr(fg^\dagger)$.
We consider the hermitian inner product on
$\bfM$ induced from the ones on $V_k$ and $W_k$.

Let $H=\GL(V)$ and $G=\GL(W)$
with compact real from $H_u=\on{U}(V)$ and $G_u=\on{U}(W)$.
Then action of
$H\times G=\GL(V)\times\GL(W)$ on $\mathrm M$  give by the formula
\[(g,g')(X,Y,x,y)=(gXg^{-1},gYg^{-1},gx(g')^{-1},g'yg^{-1})\]
defines a unitary quaternionic representation of 
$\on{U}(V)\times\on{U}(W)$ on $\textbf M$.  
The holomorphic symplectic form 
$\omega_\bC$ is given by
\beq\label{symplectic form}
\omega_\bC((X,Y,x,y),(X',Y',x',y'))=\tr(XY'-YX')+\tr(xy'-x'y)
\eeq

We denote by
\beq
\mu:\textbf M\to\on{Im}(\mathbb H)\otimes\fraku(V)^*=\on{Im}(\mathbb H)\otimes\fraku(V)
\eeq
the hyper-Kahler moment map with respect to the $\on{U}(V)$-action.
Here we identify $\fraku(V)$ with its dual space $\fraku(V)^*$ via the above
hermitian inner product.
We have  the following formulas for the real and complex moment maps
\[\mu_\bbR(X,Y,x,y)=\frac{i}{2}(XX^\dagger-Y^\dagger Y+xx^\dagger-y^\dagger y)\in\frak u(V),\]
\[\mu_\bC(X,Y,x,y)=[X,Y]+xy\in\frakgl(V)=\bC\otimes_\bbR\frak u(V).\]
The hyper-K$\dota$hler quotient $\frak M_\zeta$ is 
called the \emph{quiver variety}.

\begin{lemma}
Let $\eta_{V}$ and $\eta_{W}$ be conjugations on 
$V$ and $W$ compatible with the $Q_0$-grading\footnote{That is, we have 
$\eta_V(V_k)=V_k, \eta_W(W_k)=W_k$ for all $k\in Q_0$.}
and let
$\eta_H$, $\eta_G$, and $\eta_{\bfM}$ be the induced conjugations 
on $H=\GL(V)$, $G=\GL(W)$, and $\bfM$ respectively.
Assume  $\eta_H$ and $\eta_G$
commute with the 
Cartan conjugations on $H$ and $G$ given by the hermitian adjoint.
Then the conjugations $\eta_H\times\eta_G$
and $\eta_{\bfM}$ are 
compatible with the unitary quaternionic representation of 
$H_u\times G_u$ on $\textbf M$.
\end{lemma}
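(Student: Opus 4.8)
The plan is to verify the three conditions of Definition~\ref{quaternionic} for the pair $(\eta_H\times\eta_G,\eta_{\bfM})$ acting on the quaternionic representation $\bfM=\bfM(V,W)$ of $H_u\times G_u=\on{U}(V)\times\on{U}(W)$. Condition (3) is essentially given: it is precisely the hypothesis that $\eta_H$ and $\eta_G$ commute with the Cartan conjugations (the hermitian adjoint), and the Cartan conjugation on the product is the product of the Cartan conjugations. So the real content is conditions (1) and (2), i.e., that $(\eta_H\times\eta_G,\eta_{\bfM})$ is compatible with the symplectic representation in the sense of Definition~\ref{symplectic}, together with the extra unitarity condition that $\eta_{\bfM}$ preserves the inner product $(,)$ on $\bfM$.

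First I would record explicitly how $\eta_{\bfM}$ acts: since $\eta_V,\eta_W$ are conjugations respecting the $Q_0$-grading, they induce conjugations on each $\Hom(V_{o(h)},V_{i(h)})$, $\Hom(V_{i(h)},V_{o(h)})$, $\Hom(W_k,V_k)$, $\Hom(V_k,W_k)$ by $f\mapsto \eta\circ f\circ\eta$, and $\eta_{\bfM}$ is the direct sum, so $\eta_{\bfM}(X,Y,x,y)=(\eta_V X\eta_V,\eta_V Y\eta_V,\eta_V x\eta_W,\eta_W y\eta_V)$ with the evident identifications. Condition (1) of Definition~\ref{symplectic}, the semilinearity $\eta_{\bfM}((g,g')\cdot v)=(\eta_H(g),\eta_G(g'))\cdot\eta_{\bfM}(v)$, is then an immediate bookkeeping check from the formula for the $H\times G$-action, since $\eta_H(g)=\eta_V g\eta_V$ and $\eta_G(g')=\eta_W g'\eta_W$ and the $\eta$'s are involutions. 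For condition (2), $\overline{\omega_\bC(v,v')}=\omega_\bC(\eta_{\bfM}v,\eta_{\bfM}v')$, I would use the formula~\eqref{symplectic form} for $\omega_\bC$ in terms of traces of products, together with the two facts that $\eta$-conjugation is compatible with composition of homomorphisms and that $\tr$ of an $\eta$-conjugated operator is the complex conjugate of the original trace (because $\eta_V,\eta_W$ are antilinear and an orthonormal basis may be chosen fixed by $\eta$, or more invariantly because $\tr$ restricted to the $\eta$-real form is real-valued); each of the two trace terms $\tr(XY'-YX')$ and $\tr(xy'-x'y)$ then transforms to its conjugate. For the extra unitarity condition, that $\eta_{\bfM}$ preserves $(,)$, I would use the description of the inner product as $(f,g)=\tr(fg^\dagger)$ summed over components: since $\eta_H$ and $\eta_G$ commute with the adjoint $\dagger$, one has $(\eta\circ f\circ\eta)(\eta\circ g\circ\eta)^\dagger=\eta\circ(fg^\dagger)\circ\eta$, and again $\tr$ of an $\eta$-conjugate is the conjugate, but since $(f,g)$ for the hermitian form involves the conjugate-linear slot correctly this comes out to $(f,g)$ on the nose — here one must be slightly careful about hermitian versus bilinear and track which slot is conjugated. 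Alternatively, and perhaps more cleanly, Remark~\ref{commutes with J} tells us condition~(2) of Definition~\ref{quaternionic} is equivalent to $\eta_{\bfM}$ commuting with $J$, so I would instead check directly from the formula~\eqref{J} for $J$ that $\eta_{\bfM}\circ J=J\circ\eta_{\bfM}$, which reduces to the single identity $\eta(f^\dagger)=(\eta f\eta)^\dagger$ holding componentwise, i.e. exactly the hypothesis that $\eta_H,\eta_G$ commute with the hermitian-adjoint Cartan conjugations.

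The main obstacle is not conceptual but notational: keeping straight the many $\Hom$-summands of $\bfM$, the directions of the arrows $o(h),i(h)$, and — most delicately — the interaction between the \emph{antilinear} maps $\eta_V,\eta_W$ and the \emph{sesquilinear} structures ($\dagger$ and the hermitian inner product), so that conjugation signs land correctly. I expect the cleanest writeup to reduce everything to two atomic lemmas applied componentwise: (a) for antilinear involutions $\eta,\eta'$ of hermitian spaces $U,U'$ commuting with $\dagger$, the induced map $f\mapsto\eta' f\eta$ on $\Hom(U,U')$ satisfies $\eta'(f)^\dagger\eta = \eta(f^\dagger)\eta'$ appropriately, and (b) $\tr$ intertwines $f\mapsto\eta f\eta$ with complex conjugation. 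Granting those, conditions (1), (2) of Definition~\ref{symplectic} and the unitarity clause (2) of Definition~\ref{quaternionic} all follow by direct substitution into~\eqref{symplectic form},~\eqref{J}, and the formula $(f,g)=\tr(fg^\dagger)$, while clause (3) of Definition~\ref{quaternionic} is the hypothesis. This completes the verification and hence the lemma.
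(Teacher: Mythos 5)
Your proposal is correct and follows essentially the same route as the paper's own proof: condition~(3) of Definition~\ref{quaternionic} is the stated hypothesis, condition~(1) is a direct check of semilinearity and of $\omega_\bC\circ(\eta_{\bfM}\times\eta_{\bfM})=\overline{\omega_\bC}$ from~\eqref{symplectic form}, and for condition~(2) you correctly observe (as the paper does) that via Remark~\ref{commutes with J} it suffices to check $\eta_{\bfM}\circ J=J\circ\eta_{\bfM}$ componentwise from~\eqref{J}, which reduces to the hypothesis that $\eta_V,\eta_W$ intertwine with $\dagger$ because $\eta_H,\eta_G$ commute with the Cartan conjugations.
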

\begin{proof}

$\eta_H\times\eta_G$ commutes with the 
Cartan involution on $H\times G$ by assumption.
Using ~\eqref{J} and~\eqref{symplectic form}, it 
is straightforward to check that 
$\eta_{\bfM}$ commutes with $J$ and 
$\eta_H\times\eta_G$ and $\eta_{\bfM}$ are compatible with the
symplectic representation of $H\times G$ on $\bfM$. 
In view of Remark \ref{commutes with J}, we see that 
$\eta_H\times\eta_G$ and $\eta_{\bfM}$ satisfy (1), (2), (3) in Definition \ref{quaternionic}.
The lemma follows.
\end{proof}

Choose $\textbf v=(\textbf v_k)_{k\in Q_0},\textbf w=(\textbf w_k)_{k\in Q_0}\in\bZ^{Q_0}_{\geq 0}$ and let
$\textbf M(\textbf v,\textbf w)=\textbf M(V,W)$ where 
$V=\bigoplus_{k\in Q_0}\bC^{\textbf v_k}$ and $W=\bigoplus_{k\in Q_0}\bC^{\textbf w_k}$ equipped with
 the standard hermitian inner products.
The standard complex conjugations on $V$ and $W$ induce the 
split conjugations on 
$H=\GL(V)$ and $G=\GL(W)$ commuting with the Cartan conjugations, and hence give rise to 
involutions $\eta_H$, $\eta_G$ and $\eta_{\bfM}$ compatible with the unitary 
quaternionic representation. 
We will call the conjugation 
\[\eta_{Z_\bC}:\frak M_{Z_\bC}\to\frak M_{Z_\bC}\]
on the family of quiver varieties $\frak M_{Z_\bC}$ associated to $\eta_H\times\eta_G$ and $\eta_{\bfM}$
the \emph{split conjugation}.

\subsection{Real-symmetric homoemorphisms for quiver varieties}

Let $\on{O}(W_\bbR)=\on{U}(W)\cap\GL(W_\bbR)$
be the real orthogonal group.
By Propositions \ref{compatibility of maps} and \ref{family of involutions quiver}, the split conjugation $\eta_{Z_\bC}$ on 
$\frak M_{Z_\bC}$ preserves the subspace $\frak M_Z\subset\frak M_{Z_\bC}$ and 
gives rise to a
family of $\on{O}(W_\bbR)$-equivariant involutions
\beq\label{involutions on quiver}
\alpha_a:\frak M_Z\to\frak M_Z\ \ \  a\in[0,1]
\eeq
interpolating the anti-holomorphic involution
$\alpha_0=\phi(i)\circ\eta_{Z}$ and the holomorphic involution
$\alpha_1=\phi(k)\circ\eta_{Z}$, 
and preserving the 
strata $\frak M_{\zeta_\bC,(L)}$
of the fiber $\frak M_{\zeta_\bC}$ for  $\zeta_\bC\in Z$.

The involutions in~\eqref{involutions on quiver} restricts to a family of involutions 
$\alpha_a:\frak M_0\to\frak M_0$.
Write $\frak M_0(\bbR)=\frak M_0^{\alpha_0}$
and $\frak M_0^{sym}(\bC)=\frak M_0^{\alpha_1}$ for the fixed-points of 
$\alpha_0$ and $\alpha_1$.
 The intersections of the stratum $\frak M_{0,(L)}$
 with $\frak M_0(\bbR)$ and $\frak M_0^{sym}(\bC)$
are unions of components
\[\frak M_{0,(L)}\cap\frak M_0(\bbR)=\bigsqcup\mO_{l}(\bbR)\ \ \ \ \ \ \ 
\frak M_{0,(L)}\cap\frak M_0^{sym}(\bC)=\bigsqcup\mO_{l}^{sym}(\bC)\]
In \cite[Theorem 1.9]{BeSc}, Bellamy-Schedler proved that the strata $\frak M_{0,(L)}$ are 
symplectic leaves of $\frak M_{0}$. We will call the components 
$\mO_{l}(\bbR)$ and $\mO_{l}^{sym}(\bC)$ above 
the \emph{real symplectic leaves} and \emph{symmetric symplectic leaves}
respectively.

The following proposition follows from 
Example \ref{norm function}:
\begin{thm}\label{KS for quivers}
There is a $\on{O}(W_\bbR)\times\bbR^\times$-equivaraint stratified homeomorphism
\beq\label{homeo for quiver}
\xymatrix{
\frak M_0(\bbR)\ar[r]^{\sim\ }&\frak M_0^{sym}(\bC)}
\eeq
which restricts to real analytic $\on{O}(W_\bbR)$-equivariant isomorphisms
between 
individual real and symmetric symplectic leaves.
The homeomorphism induces a bijection 
\beq\label{bijection for quiver}
\{\mO_{l}(\bbR)\}_{l}\longleftrightarrow\{\mO_{l}^{sym}(\bC)\}_{l}
\eeq
between real and symmetric leaves preserving the closure relation.

\end{thm}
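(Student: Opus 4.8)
The plan is to deduce Theorem \ref{KS for quivers} directly from Example \ref{norm function}, by checking that all hypotheses of Lemma \ref{fixed points} are in force for the quiver-variety set-up, and then unpacking the conclusion into the statement about symplectic leaves. First I would recall the relevant structures on $\frak M_0 = \mu^{-1}(0)/H_u$: the orbit-type stratification $\frak M_0 = \bigsqcup_{(L)} \frak M_{0,(L)}$, which by \cite[Theorem 1.9]{BeSc} coincides with the decomposition into symplectic leaves and whose strata are real analytic (indeed affine symplectic) manifolds; the family of involutions $\alpha_a \colon \frak M_0 \to \frak M_0$, $a\in[0,1]$, from Proposition \ref{family of involutions quiver}, obtained by restricting the split-conjugation family~\eqref{involutions on quiver}; the $\bbR^\times$-action $\phi(t)$ from~\eqref{G_m action}, which for $\frak M_0$ has the single fixed point $o = [0]$ (this uses that the only $H$-closed point of $\mu_\bC^{-1}(0)$ fixed by the scaling action is the origin, together with $\mu_\bbR(0)=0$); the compact group $L = \Orth(W_\bbR) = \on{U}(W)\cap\GL(W_\bbR)$, which acts commuting with $f$, the $\alpha_a$, and the scaling; and the norm function $\lvert\lvert - \lvert\lvert \colon \frak M_0 \to \bbR_{\ge 0}$, $\lvert\lvert m \lvert\lvert = (\tilde m, \tilde m)^{1/2}$ for any lift $\tilde m \in \mu^{-1}(0)$, which is proper (the quotient map $\mu^{-1}(0)\to\frak M_0$ is proper and $(,)$ is positive definite), $\alpha_a$- and $\Orth(W_\bbR)$-invariant, and scales as $\lvert\lvert\phi(t)m\lvert\lvert = t\lvert\lvert m\lvert\lvert$. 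The $\alpha_a$-invariance of the norm is the one genuinely substantive point: it follows because $\eta_{\bfM}$ preserves $(,)$ by Definition \ref{quaternionic}(2) and $\phi(q)$ is the restriction of an isometry of $\bfM$ (multiplication by a unit quaternion), so each $\alpha_a = \phi(q_{a\pi/2})\circ\eta_Z$ preserves $\lvert\lvert-\lvert\lvert$ on the nose.

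With these checks in place, apply Lemma \ref{fixed points}(2),(3) with $M = \frak M_0$, $N = \{0\}$ (so the target of $f$ is a point, making the ``compatible with maps to $N$'' clause vacuous), $L = \Orth(W_\bbR)$, and the scaling and norm as above. Part (1) of the lemma first tells us that the fixed loci $\frak M_0(\bbR) = \frak M_0^{\alpha_0}$ and $\frak M_0^{sym}(\bC) = \frak M_0^{\alpha_1}$ are stratified by the fixed loci $\frak M_{0,(L)}^{\alpha_a}$ of the strata, each of which is a real analytic manifold (being the fixed locus of a real analytic involution on a real analytic manifold). Part (2), applied with $a = 0$ and $a' = 1$, produces a $\bbR_{>0}$-equivariant stratified homeomorphism $\frak M_0(\bbR) \risom \frak M_0^{sym}(\bC)$ that is real analytic on each stratum; the $\bbR_{>0}$-equivariance upgrades to $\bbR^\times$-equivariance using that $\phi(-1)$ commutes with everything (and for the quiver set-up acts trivially or compatibly — in any case $-1 \in \bbR^\times$ acts through $\phi(-1)$ and this is built into the construction of~\eqref{step 3}). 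Part (3) gives $\Orth(W_\bbR)$-equivariance of this homeomorphism. This is exactly~\eqref{homeo for quiver}.

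It remains to translate ``stratified by fixed loci of orbit-type strata, real analytic isomorphism on each'' into the leaf statement. Since $\frak M_{0,(L)}$ is a symplectic leaf and $\frak M_{0,(L)} \cap \frak M_0(\bbR) = \bigsqcup_l \mO_l(\bbR)$, $\frak M_{0,(L)} \cap \frak M_0^{sym}(\bC) = \bigsqcup_l \mO_l^{sym}(\bC)$ are the decompositions into connected components, the stratified homeomorphism carries the components of $\frak M_{0,(L)}\cap\frak M_0(\bbR)$ homeomorphically — indeed real-analytically, $\Orth(W_\bbR)$-equivariantly — onto those of $\frak M_{0,(L)}\cap\frak M_0^{sym}(\bC)$; a homeomorphism preserves connected components, which yields the indexing bijection~\eqref{bijection for quiver}, and it preserves closures of subsets, which gives the claim that~\eqref{bijection for quiver} respects the closure relation. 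Finally, the compatibility with the scaling and the $\Orth(W_\bbR)$-action in~\eqref{homeo for quiver} comes for free from the equivariance in Lemma \ref{fixed points}(2),(3).

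The main obstacle I anticipate is not in the deduction itself — which is essentially bookkeeping — but in making the properness of $\lvert\lvert-\lvert\lvert$ and the uniqueness of the $\bbR^\times$-fixed point $o_M \in \frak M_0$ genuinely rigorous, since these rest on properties of the Kempf–Ness / hyper-Kähler quotient construction (the quotient map $\mu^{-1}(0) \to \frak M_0$ being proper, and $o_M$ being a stratum in its own right, i.e. the orbit-type stratum of the generic stabilizer containing the most-degenerate point). If $\frak M_0$ fails to have a unique scaling-fixed point in some edge case (e.g.\ $W = 0$, or disconnected situations), one would need to argue componentwise or note the cone structure directly; but for the quiver varieties $\frak M_0(\textbf v,\textbf w)$ with the standard scaling this is standard and is exactly the situation already handled in Example \ref{norm function}.
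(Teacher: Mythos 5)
Your proposal is correct and follows precisely the route the paper itself takes: the paper derives Theorem \ref{KS for quivers} directly from Example \ref{norm function}, which is exactly the application of Lemma \ref{fixed points} (with $M=\frak M_0$, $N=\{0\}$, $L=K_\bbR=\Orth(W_\bbR)$, the norm function $\lVert\cdot\rVert$, and the uniqueness of the $\bbR^\times$-fixed point) that you spell out, together with the Bellamy--Schedler identification of the orbit-type strata $\frak M_{0,(L)}$ with symplectic leaves. The only place you add genuine (and correct) content beyond what the paper records is in verifying the $\alpha_a$-invariance of the norm and in upgrading $\bbR_{>0}$- to $\bbR^\times$-equivariance via $\phi(-1)$, both of which the paper leaves implicit.
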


In the next section we shall see that 
the nilpotent cone $\calN_n(\bC)$ in $\frak{gl}_n(\bC)$ is an example of quiver variety and 
the homeomorphism~\eqref{homeo for quiver} in this case becomes an 
$\on{O}_n(\bbR)\times\bbR^\times$-equivariant 
homeomorphism 
\[\calN_n(\bbR)\is\calN^{sym}_n(\bC)\]
between the real nilpotent cone in $\frak{gl}_n(\bbR)$
and the symmetric nilpotent cone in the space of symmetric matrices $\fp_n(\bC)$
and the bijection~\eqref{bijection for quiver} is the well-known Kostant-Sekiguchi bijection between
$\GL_n(\bbR)$-orbits in $\calN_n(\bbR)$ and $\on{O}_n(\bC)$-orbits in 
$\calN^{sym}_n(\bC)$.
Thus one can view~\eqref{homeo for quiver}
as Kostant-Sekiguchi homeomorphisms for quiver varieties.

\quash{
\begin{remark}
It is known that 
nilpotent cone $\calN_n(\bbC)\subset\frakgl_n(\bbC)$ is an example of 
quiver variety \[\frak M_0\is\calN_n(\bC)\]
In Theorem \ref{} we shall show that, in this case 
the involutions $\alpha_0$, $\alpha_1$ are given by the negative of 
the conjugation
$\alpha_0(X)=-\bar{X}$ and the transpose $\alpha_1(X)=X^t$ respectively, and the Proposition above implies that 
there is a $\on{O}(W_\bbR)$-equivaraint stratified homeomorphism
\[\calN_n(\bbR)\is\calN_{n}^{sym}(\bC)\]
between the real nilpotent cone and the symmetric nilpotent cone.
\end{remark}
}

\section{Real-symmetric homeomorphisms for Lie algebras}\label{main results}

\subsection{Main results}
Let us return to the Cartan subgroup $T\subset G$, stable under $\eta$ and $\theta$,  and maximally split with respect to $\eta$.
Let $\ft\subset \fg$ denote its Lie algebra, $\rW_G=N_G(\ft)/Z_G(\ft)$  the Weyl group
and introduce the affine quotient
$\fc = \fg//G =  \Spec(\calO(\fg)^G)\simeq\ft//\rW_G = \Spec( \calO(\ft)^{\rW_G})$.
Let $\chi:\fg\to\fc$ be the natural map.

Next, let $\fa = \ft \cap \fp$ be the $-1$-eigenspace of $\theta$, and write $\fa_\bbR = \fa \cap \fg_\bbR$ for the real form of $\fa$ with respect to $\eta$.
Let $\rW=N_{K_\bbR}(\fa_\bbR)/Z_{K_\bbR}(\fa_\bbR)=N_K(\fa)/Z_K(\fa)$ be the ``little Weyl group", and introduce  the affine quotient
$
\fc_\fp = \fp//K = \Spec( \calO(\fp)^{K})\simeq\fa//{\rW} = \Spec( \calO(\fa)^{\rW})$. Let $\chi_\fp:\fp\to \fc_\fp$ denote the natural map.

Let $\fc_{\fp,\bbR}\subset\fc$ be the image of the natural map
$\fa_{\bbR}\to\fc$.  Since the map $\fa_{\bbR}\to\fc$ is a polynomial map, 
by Tarski-Seidenberg's theorem, its image $\fc_{\fp,\bbR}$ is semi-algebraic.
For example, if $\fg_\bbR=\frak{sl}_2(\bbR)$ then
$\fc=\bC$ and $\fc_{\fp,\bbR}=\bbR_{\leq 0}$.

Consider the following semi-algebraic subsets of $\fg$,
$\fg_\bbR$ and $\fp$:
\beq\label{three families}
\fg'=\fg\times_\fc\fc_{\fp,\bbR}\ \ \ \ \fg_\bbR'=\fg_\bbR\times_\fc\fc_{\fp,\bbR} \ \ \ \ \fp'=\fp\times_\fc\fc_{\fp,\bbR}.
\eeq
We have
\beq\label{real eigenvalues}
\fg_\bbR'=\big\{x\in \fg_\bbR|\text{\ \ eigenvalues of\ }\ad_x\text{\ are real}\big\}
\eeq
\beq
\fp'=\big\{x\in \fp|\text{\ \ eigenvalues of\ }\ad_x\text{\ are real}\big\}
\eeq
Note that $G$, $G_\bbR$ and $K$ naturally act on $\fg'$, $\fg_\bbR'$ and $\fp'$ respectively, and the actions are along the fibers of the natural projections 
\beq\label{projections}
\fg'\to\fc_{\fp,\bbR}\ \ 
\ \fg_\bbR' \to \fc_{\fp,\bbR}\ \ \ 
\fp' \to \fc_{\fp,\bbR}
\eeq

\begin{thm} \label{homeomorphism for g} 
Suppose all simple factors of the complex reductive  Lie algebra $\fg$ are of classical type.
There is a $K_\bbR$-equivariant homeomorphism
\begin{equation}
\xymatrix{
\fg'_\bbR\ar[r]^{\sim}& \fp'
}
\end{equation}
compatible with the natural projections to $\fc_{\fp,\bbR}$. 
Furthermore, the homeomorphism restricts to a real analytic isomorphism between individual $G_\bbR$-orbits and $K$-orbits.
\end{thm}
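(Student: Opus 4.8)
The strategy is to reduce Theorem~\ref{homeomorphism for g} to the case $\fg = \frakgl_n(\bbC)$ and then to leverage Theorem~\ref{family of involutions for g} (= Theorem~\ref{intro: main 2}) together with the trivialization-of-fixed-points machinery of Lemma~\ref{fixed points}. The core mechanism is already visible in Example~\ref{norm function}: once one has a continuous family of $K_\bbR \times \bbR^\times$-equivariant involutions $\alpha_a : \fg' \to \fg'$, commuting with the stratification and with the projection to $\fc_{\fp,\bbR}$, interpolating $\alpha_0 = \eta$ and $\alpha_1 = -\theta$, and a compatible proper ``norm'' function, then Lemma~\ref{fixed points}(2),(3) produces a $K_\bbR \times \bbR^\times$-equivariant stratified homeomorphism $\fg'^{\alpha_0} \risom \fg'^{\alpha_1}$, i.e. $\fg'_\bbR \risom \fp'$, real-analytic on each stratum. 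So the whole theorem is a corollary of Theorem~\ref{family of involutions for g} plus the existence of a suitable norm function and stratification on $\fg'$.

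\textbf{Step 1: realize $\fg'$ (for $\frakgl_n$) inside a family of quiver varieties.} Using the quiver-variety descriptions of adjoint orbit closures of \cite{MV} (with the $\frakgl_n$ nilpotent cone as the $Z_\bC = 0$ fiber, per \cite{KP,KS,M,Nak1}), identify the family $\chi' : \frakgl_n'(\bC) \to \bC^n$ with a family of hyper-K\"ahler quotients $\frak M_{Z_\bC} \to Z_\bC$ restricted over the real locus $\fc_{\fp,\bbR} \subset Z_\bC$; here $H_u = \on{U}(V)$, $G_u = \on{U}(W)$ with $W = \bbC^n$, and $\on{O}_n(\bbR) = K_\bbR$. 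The split conjugation of Section~\ref{split conjugations} gives $\eta_{Z_\bC}$, and restricting to $\frak M_Z$ (the $i$-component-zero part, which matches the real eigenvalue condition) gives the family $\alpha_a$ of~\eqref{involutions on quiver}. One must check this $\alpha_a$ agrees on the nose with the one asserted in Theorem~\ref{intro: inv}: at $a=0$ it is $A \mapsto \bar A$, at $a=1$ it is $A \mapsto A^t$, and it commutes with $\chi'$ — this is the content of verifying that $\phi(i)\circ\eta_Z$ and $\phi(k)\circ\eta_Z$ have the claimed matrix formulas, which is a direct computation with~\eqref{J} and the moment map formulas.

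\textbf{Step 2: apply Lemma~\ref{fixed points}.} Take $M = \fg'$ with the orbit-type (= symplectic-leaf) stratification pulled back from the quiver-variety strata $\frak M_{\zeta_\bC,(L)}$, $N = \fc_{\fp,\bbR}$, $L = K_\bbR$, the $\bbR_{>0}$-action from $\phi(t)$, and the norm $\lVert - \rVert$ of Example~\ref{norm function}. One checks the hypotheses: $\lVert-\rVert$ is proper, $\alpha_a$- and $K_\bbR$-invariant, homogeneous of degree $1$; the unique $\bbR_{>0}$-fixed point is $0 \in \fg'$, which is a stratum; and $\chi'$ is $\bbR_{>0}$-equivariant (with weight $2$ on the base) — note that the base $\fc_{\fp,\bbR}$ also carries an $\bbR_{>0}$-action, so one should run the argument fiberwise over $\fc_{\fp,\bbR}$, or observe that Lemma~\ref{fixed points} as stated allows $N$ with an $\bbR_{>0}$-action and $f$ equivariant. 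Conclusion: a $K_\bbR$-equivariant stratified homeomorphism $\fg'_\bbR \risom \fp'$ over $\fc_{\fp,\bbR}$, real-analytic on strata. Since $G_\bbR$- and $K$-orbits are exactly the connected components of the intersections of the strata with $\fg'_\bbR$ and $\fp'$ respectively (as in Section~\ref{quiver varieties}), this restricts to real-analytic isomorphisms of individual orbits.

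\textbf{Step 3: pass from $\frakgl_n$ to general classical $\fg$.} Embed $\fg \hookrightarrow \frakgl_n(\bC)$ compatibly with $\eta$, $\theta$, and the hermitian structure (e.g. $\frak{so}_n, \frak{sp}_{2n}$ as fixed points of an involution $\sigma$ of $\frakgl_n$ of the form $X \mapsto -J X^t J^{-1}$); then $\fg$ is cut out inside $\frakgl_n(\bC)$ by $\sigma$, and the constructed $\alpha_a$ on $\frakgl_n'(\bC)$ commutes with $\sigma$ because $\sigma$ is built from transpose and conjugation, which are $\alpha_0,\alpha_1$, and more precisely commutes with the whole $\on{Sp}(1)$-action and the split conjugation. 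Hence $\alpha_a$ restricts to $\fg' = \frakgl_n'(\bC)^\sigma \cap (\text{real-eigenvalue locus})$, giving the family of Theorem~\ref{family of involutions for g} for $\fg$; one also needs that the stratification restricts (it does: $\sigma$-fixed points of strata). Then rerun Step 2 with $M = \fg'$. This is essentially the ``compatibility with inner automorphisms and Cartan involutions'' step flagged in the introduction.

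\textbf{Main obstacle.} The delicate point is not the abstract homeomorphism — Lemma~\ref{fixed points} does that cleanly — but \emph{identifying the quiver-variety symmetries with the expected maps} and \emph{controlling the stratifications and the semi-algebraic structure} so that everything glues over the non-smooth, merely semi-algebraic base $\fc_{\fp,\bbR}$ rather than just over a point. Concretely: (i) checking that $\phi(i)\circ\eta$ really is complex conjugation and $\phi(k)\circ\eta$ really is transpose on each adjoint orbit closure in the $\on{MV}$ quiver picture — not just on the nilpotent cone — and that these are compatible as $\zeta_\bC$ varies; and (ii) verifying that the orbit-type stratification of $\fg'$ pulled from the quiver varieties coincides with the $G$-orbit stratification (so that ``real analytic on strata'' yields the orbit statement), and that Lemma~\ref{fixed points}'s hypotheses hold with a base $N = \fc_{\fp,\bbR}$ carrying its own $\bbR_{>0}$-action. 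The reduction from $\frakgl_n$ to $\frak{so}_n, \frak{sp}_{2n}$ via $\sigma$-fixed points is then routine given that $\sigma$ commutes with the full hyper-K\"ahler $\on{Sp}(1)$-action and the split conjugation.
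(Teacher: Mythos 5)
Your overall architecture matches the paper's: build $\alpha_a$ on $\frakgl_n'(\bbC)$ from the quiver-variety symmetries, feed it into Lemma~\ref{fixed points} with a suitable norm and stratification, then cut down to general classical $\fg$ by an involution $\sigma$. However, Step~3 as written has a genuine gap. You propose to take $\fg \hookrightarrow \frakgl_n(\bbC)$ and simply \emph{restrict} $\alpha_{n,a}$ to $\fg'$, asserting this ``giv[es] the family of Theorem~\ref{family of involutions for g} for $\fg$.'' But the restricted family has endpoints $\alpha_{n,0}(M)=\overline M$ and $\alpha_{n,1}(M)=M^t$, which coincide with $\eta$ and $-\theta$ \emph{only} for the split real forms ($\frak{sl}_n(\bbR)$, $\frak{so}_{n,n}$, $\frak{sp}_{2m}(\bbR)$). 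For, say, $\fg_\bbR=\frak{so}_{p,n-p}$ one has $\eta(g)=\on{Ad}I_{p,n-p}(\bar g)$ and $-\theta(g)=-\on{Ad}I_{p,n-p}(g)$, so plain restriction lands you at the wrong endpoints. The paper's construction~\eqref{def of involution} is $\alpha_a := \alpha_{n,a}\circ\beta_n\circ\theta$ on $\fg'$, i.e.\ a twist by the Cartan involution $\beta_n(M)=-M^t$ composed with the target $\theta$ (an inner automorphism by an element of $\on{O}_n(\bbR)$ in each case of Lemma~\ref{classification}); this uses $\alpha_{n,a}\circ\beta_n=\beta_n\circ\alpha_{n,a}$ (Proposition~\ref{alpha beta commute}) and $\on{O}_n(\bbR)$-equivariance of $\alpha_{n,a}$ to ensure the composite is still a continuous family of involutions preserving $\fg'$. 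Your sketch omits this twist, so the family you describe is not the one needed. There is also a case-by-case subtlety in type $D$ (Remark~\ref{even orthogonal}): for $\frak{so}_{2m}$ the map $\fc\to\fc_n$ is not a closed embedding because of the Pfaffian, and the paper has to argue separately that $\alpha_a$ preserves the fibers of $\chi:\fg'\to\fc_{\fp,\bbR}$ (not just those of $\iota_\fc\circ\chi$).

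Two smaller imprecisions worth flagging. First, the norm $\lVert-\rVert$ of Example~\ref{norm function} lives on $\frak M_Z\cong\fg_n\times_{\fc_n}\ft_{n,\bbR}$; to descend to $\fg_n'=(\fg_n\times_{\fc_n}\ft_{n,\bbR})/\on{S}_n$ the paper explicitly averages it over the $\on{S}_n$-action (using Lemma~\ref{S_n action} that $\alpha_a$ commutes with $\on{S}_n$, so the averaged norm remains $\alpha_a$-invariant) — this step is not automatic and you don't mention it. Second, $G_\bbR$- and $K$-orbits are \emph{not} ``exactly the connected components'' of the intersections of strata with $\fg_\bbR'$ and $\fp'$: $G_\bbR$ and $K$ can be disconnected, and a single stratum may meet several orbits, each with several components. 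The conclusion about individual orbits is still correct, but the paper's argument routes through the fact that $K_\bbR$ acts transitively on the components of each orbit together with $K_\bbR$-equivariance of the homeomorphism, not through a components-equal-orbits identification. With the $\beta_n\circ\theta$ twist supplied, the type~$D$ check added, the $\on{S}_n$-averaging inserted, and the orbit conclusion argued via $K_\bbR$-equivariance, your outline becomes the paper's proof.
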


We deduce the theorem above from the following.

\begin{thm}\label{family of involutions for g}
Suppose all simple factors of the complex reductive  Lie algebra $\fg$ are of classical type.
There is a continuous one-parameter families of maps 
\[\alpha_{a}:\fg'\lra\fg',\ a\in[0,1]\] satisfying the following:
\begin{enumerate}

\item $\alpha_a^2$ is the identity, for all $s\in[0,1]$.
\item At $a=0$, we have $\alpha_0(M)=\eta(M)$.
\item At $a=1$, we have $\alpha_1(M)=-\theta(M)$.
\item $\alpha_a$ is $K_\bbR$-equivaraint and take a $G$-orbit real analytically to a $G$-orbit.
\item $\alpha_a$ commutes the with projection map
$\fg'\to\fc_{\fp,\bbR}$.

\end{enumerate}

\end{thm}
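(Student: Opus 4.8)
The plan is to prove the theorem first for $\fg=\frakgl_n(\bbC)$ — where $\eta$ is the standard complex conjugation, $G_\bbR=\GL_n(\bbR)$, $-\theta(M)=M^t$, $K=\Orth_n(\bbC)$, $K_\bbR=\Orth_n(\bbR)$, and $\fc_{\fp,\bbR}=\bbR^n$ — and then to bootstrap to an arbitrary classical $\fg$. For the $\frakgl_n$ case I would start from the description, due to Mirkovi\'c--Vybornov \cite{MV} and based on \cite{KP,KS,M,Nak1}, of adjoint orbit closures in $\frakgl_n(\bbC)$ and their deformations as Nakajima quiver varieties. Since $\calN_\xi:=\chi^{-1}(\xi)=\overline{\mO_\xi^{\frakr}}$ for every $\xi$, assembling these fibers realizes the whole family $\frakgl_n'(\bbC)\to\bbR^n$ as a family of hyper-K\"ahler quotients $\frak M_Z\to Z$ of a linear space $\bfM$, with $Z\cong\bbR^n\subset Z_\bC\cong\bbC^n$, gauge group $H=\GL(V)$, a framing group $G=\GL(W)$ whose action recovers the adjoint $\GL_n(\bbC)$-action, and $\frak M_Z\to Z$ matching $\chi'$. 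Feeding the split conjugation of Section \ref{split conjugations} into Proposition \ref{family of involutions quiver} then produces the family $\alpha_a=\phi_{a\pi/2}\circ\eta_Z:\frak M_Z\to\frak M_Z$, and it remains to verify the five properties.

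Property (1) is Proposition \ref{family of involutions quiver}(1). For (2) and (3) I would identify $\alpha_0=\phi(i)\circ\eta_Z$ with $M\mapsto\bar M$ and $\alpha_1=\phi(k)\circ\eta_Z$ with $M\mapsto M^t$; this is a direct, if lengthy, computation unwinding the identification of \cite{MV} and the quaternionic action on $\bfM$ — done for the nilpotent cone in \cite{KP,KS,M} and here extended over the whole family — and it is the concrete manifestation of the quaternion relations of Proposition \ref{compatibility of maps}, reflecting $\bar M=(M^t)^{\dagger}$. For (4), $K_\bbR$-equivariance is Proposition \ref{family of involutions quiver}(2), and by Proposition \ref{family of involutions quiver}(3) each $\alpha_a$ preserves the orbit-type strata of every fiber $\frak M_{\zeta_\bC}=\calN_\xi$; since for $\calN_\xi$ these strata coincide with the $\GL_n(\bbC)$-orbits (finitely many by Kostant \cite{Ko1}, equal to the symplectic leaves by \cite{BeSc}, whose leaves on $\calN_\xi$ are the orbits), $\alpha_a$ carries each $\GL_n(\bbC)$-orbit onto itself, real-analytically. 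For (5): on $Z$, $\phi_{a\pi/2}$ acts by $-\id_Z$ (as in Section \ref{split conjugations}) while $\eta_Z$ acts by $\eta_H|_Z=-\id_Z$ (complex conjugation on $\fraku(V)$ acts by $-\id$ on its center), so $\alpha_a$ is the identity on $Z=\fc_{\fp,\bbR}$; equivalently, (5) follows from the strengthened (4).

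For general classical $\fg$, write it as a product of simple factors of types $A,B,C,D$; type $A$ is the previous case. For a $B$-, $C$-, or $D$-factor I would fix a matrix realization $\fg\subset\frakgl_N(\bbC)$ adapted to the split real form so that the standard conjugation on $\frakgl_N$ restricts to $\eta$ and transpose on $\frakgl_N$ restricts to $-\theta$ — for the split orthogonal algebras, with the diagonal sign form $S=\diag(I_p,-I_q)$, $|p-q|\le 1$, the Cartan involution is $\on{Ad}_S$ and $M^t=-SMS^{-1}=-\theta(M)$ on the algebra; the symplectic case is analogous. Granting that the $\frakgl_N$-family $\alpha_a$ restricts to $\fg'$, one sets $\alpha_a^\fg:=\alpha_a^{\frakgl_N}|_{\fg'}$; then (1), and (by the choice of realization) (2)--(3), are immediate; (4) holds because $\fg'$ meets each $\GL_N(\bbC)$-orbit in a finite disjoint union of $G$-orbits ($G$ the complex orthogonal or symplectic group), each connected, which $\alpha_a$ permutes — and continuity in $a$, together with the fact that $\alpha_0=\eta$ fixes each of them (conjugation commutes with the reflection distinguishing "very even" pieces), pins down that $\alpha_a$ fixes each $G$-orbit for all $a$ — and (5) then follows as before.

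The main obstacle I anticipate is precisely the restriction claim for classical $\fg$. It is not automatic: the holomorphic involution cutting out $\fg\subset\frakgl_N$ is $\on{Ad}_S\circ(\cdot)^t$, and this does \emph{not} commute with $\alpha_a$ for intermediate $a$ (the quaternions $q_{a\pi/2}$ and $k$ fail to commute), so one cannot simply restrict the $\frakgl_N$-construction. The way around it is to make the adjoint-quotient family, the hyper-K\"ahler $\on{SU}(2)$-action, the split conjugation, and the group actions simultaneously compatible with the orthogonal/symplectic structure — in practice, to realize the orbit closures in a classical $\fg$ themselves as hyper-K\"ahler quotients via constructions of Kraft--Procesi type \cite{KP}, apply Theorem \ref{intro: main 4} and Proposition \ref{family of involutions quiver} there directly, and import the endpoint identifications $\alpha_0=\eta$, $\alpha_1=-\theta$ from the $\frakgl_n$ computation through the embedding $\fg\subset\frakgl_N$. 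This is the "compatibility with inner automorphisms and Cartan involutions" alluded to in the introduction, and checking it uniformly over types $B,C,D$ is the technical core; once it is in place, Theorem \ref{homeomorphism for g} follows from the present theorem by a trivialization over $[0,1]$ as in Example \ref{norm function}.
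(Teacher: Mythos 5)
Your outline for $\frakgl_n$ starts correctly (quiver description via \cite{MV}, split conjugation into Proposition~\ref{family of involutions quiver}, endpoint identifications by direct computation with the quaternionic structure), but it contains a genuine gap that the paper spends real effort closing. The family of quiver varieties $\frak M_Z\to Z$ is \emph{not} the family $\frakgl_n'(\bC)\to\fc_{n,\bbR}$: under the Mirkovi\'c--Vybornov identification $\phi_{n,\bC}$, the image $\frak M'_{Z_\bC}$ is $\fg_n\times_{\fc_n}\ft_n$, so the base $Z$ of the quiver family is identified with $\ft_{n,\bbR}$, \emph{not} with $\fc_{n,\bbR}$. Your verification of property~(5) then asserts $Z=\fc_{\fp,\bbR}$, which conflates the Cartan $\ft_{n,\bbR}$ with its $\on{S}_n$-quotient. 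To get involutions on $\fg_n'=\fg_n\times_{\fc_n}\fc_{n,\bbR}$ one must show the involutions $\tilde\alpha_{n,a}$ on $\fg_n\times_{\fc_n}\ft_{n,\bbR}$ are $\on{S}_n$-equivariant and then descend; the paper proves this in Lemma~\ref{S_n action} using Nakajima's reflection functors $S_k$ (and their compatibility with $\tilde\phi_{n,\bC}$), which is a non-trivial input your proposal doesn't touch at all.

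For types $B,C,D$ you correctly diagnose that naive restriction along $\fg\subset\frakgl_N$ fails because $q_{a\pi/2}$ and $k$ do not commute, but the remedy you propose — redo the whole hyper-K\"ahler/Kraft--Procesi construction for orbit closures in orthogonal and symplectic Lie algebras — is a genuinely different and considerably heavier route, and you leave it entirely undone, calling it "the technical core." The paper avoids this: it never produces quiver models for $B,C,D$ orbit closures. Instead it observes that $\alpha_{n,a}$ commutes with the Cartan involution $\beta_n(M)=-M^t$ (Proposition~\ref{alpha beta commute}) and is $\on{O}_n(\bbR)$-equivariant, so that for each classical $\theta$ in Lemma~\ref{classification}, the composition $\alpha_{n,a}\circ\beta_n\circ\theta$ is again an involution on $\fg_n'$ commuting with the defining involution $\sigma$ of $\fg\subset\fg_n$; it therefore restricts to $\fg'$, and the endpoints work out to $\eta$ and $-\theta$ by a short matrix computation. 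That trick sidesteps the non-commutativity problem entirely and is the reason the paper's proof reduces to the $\frakgl_n$ case rather than requiring new quiver-variety input for each classical type.
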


\quash{
Let $\calN_\xi=\chi^{-1}(\xi)$ be the fiber 
of the $\chi:\fg\to\fc$ over $\xi$.
Assume $\xi\in\fc_{\fp,i\bbR}\subset\fc$.
Since $\xi$ is fixed by the involutions on $\fc$
induced by 
$-\eta$ and $-\theta$, the fiber $\calN_\xi$ is stable under $-\eta$ and $-\theta$
and we write 
\[i\calN_{\xi,\bbR}=\calN_{\xi}\cap i\fg_\bbR\ \ \ \ \ \calN_{\xi,\fp}=\calN_\xi\cap\fp.\]
for the fixed points.
There are finitely many 
$G_\bbR$-orbits and $K$-orbits on $i\calN_{\xi,\bbR}$ and $\calN_{\xi,\fp}$
\[i\calN_{\xi,\bbR}=\bigsqcup_l\mO_{i\bbR,l}\ \ \ \ \calN_{\xi,\fp}=\bigsqcup_l\mO_{\fp,l}\]
Theorem \ref{homeomorphism for g} immediately implies the following:

\begin{thm}\label{generalized KS}
There is a $K_\bbR$-equivaraint stratified homeomorphism
\[i\calN_{\xi,\bbR}\is\calN_{\xi,\fp}\]
which restricts to real analytic isomorphisms between individual $G_\bbR$-orbits and $K$-orbits.
The homeomorphism induces an isomorphism between $G_\bbR$-orbits and $K$-orbits posets
\[|i\calN_{\xi,\bbR}/G_\bbR|\longleftrightarrow|\calN_{\xi,\fp}/K|.\]
\end{thm}
}
\subsection{Quiver varieties of type $A$ and conjugacy classes of matrices}
Consider the type $\on{A}_n$ quiver:
\[\xymatrix{Q:\overset{1}\bullet\ar[r]&\overset{2}\bullet\ar[r]&\overset{3}\bullet\ar[r]&\cdot\cdot\cdot
\ar[r]&\overset{n-2}\bullet\ar[r]&\overset{n-1}\bullet\ar[r]&\overset{n}\bullet}\]
Let $\textbf v=(n,n-1,...,2,1)\in\bZ_{\geq0}^n$
and $\textbf w=(n,0,...,0,0)\in\bZ_{\geq0}^n$. 
Consider  
the unitary quaternionic representation $\textbf M(\textbf v,\textbf w)$
of $H_u=\prod_{k=1}^n\on{U}(k)$
in Section \ref{split conjugations}.
A vector in $\textbf M(\textbf v,\textbf w)$ can be represented as 
a diagram 
\beq\label{quiver rep}
\xymatrix{\bC^n\ar@<1ex>[d]^x&&&&&&\\
\bC^n\ar@<1ex>[r]^X\ar@<1ex>[u]^y&\bC^{n-1}\ar@<1ex>[l]^Y\ar@<1ex>[r]^X&\bC^{n-2}\ar@<1ex>[l]^Y\ar@<1ex>[r]^X&\cdot\cdot\cdot\ar@<1ex>[l]^Y\ar@<1ex>[r]^X&\bC^3\ar@<1ex>[l]^Y\ar@<1ex>[r]^X&\bC^2\ar@<1ex>[r]^Y\ar@<1ex>[l]^Y&\bC^1\ar@<1ex>[l]^Y
}
\eeq
Let 
$\frakM_{Z_\bC}=\mu_\bbR^{-1}(0)\cap\mu_\bC^{-1}(-Z_\bC)/H_u\to Z_\bC$ 
be the family of quiver varieties associated to $\textbf M(\textbf v,\textbf w)$.

Denote by
$\fg_n=\frakgl_n(\bC)$, $\ft_n\subset\fg_n$ the subspace of diagonal matrices,
$\fc_n=\fg_n//\GL_n(\bC)$, and $\chi_n:\fg_n\to\fc_n$ the Chevalley map.
We will fix an identification
$\fc_n=\bC^n$ so that the map $\chi_n:\fg_n\to\fc_n=\bC^n$
is given by $\chi_n(M)=(c_1,...,c_n)$, where   
$T^n+c_1T^{n-1}+\cdot\cdot\cdot+c_n$ is the characteristic polynomial of 
$M$.
Consider the following maps
\beq\label{phi_n}
\tilde\phi_{n,\bC}:\frakM_{Z_\bC}\to\fg_n\times\ft_n\ \ \ \ [X,Y,x,y]\to (yx,\zeta_\bC)
\eeq
\beq\label{iota}
\iota_{n,\bC}:Z_\bC\to\ft_n\ \ \ \zeta_\bC\to (c_1,...,c_n)
\eeq
where $\zeta_\bC=(\zeta_1,...,\zeta_n)$ is the image of  
$[X,Y,x,y]\in\frakM_{Z_\bC}$ under the projection map $\chi_{Z_\bC}:\frak M_{Z_\bC}\to Z_\bC$ 
and $c_i=\zeta_1+\cdot\cdot\cdot+\zeta_i$, $1\leq i\leq n$.
Note that the map $\tilde\phi_{n,\bC}$ intertwines the 
$\GL_n(\bC)\times\bbR^\times$-action on $\frakM_{Z_\bC}$ with the one on
$\fg_n\times\ft_n$ given by $(g,a)(M,t)=(gMg^{-1},a^2t)$.

\begin{prop}\label{MV}
Let $\pi_{Z_\bC}:\frak M_{(\zeta_\bbR,Z_\bC)}\to\frak M_{Z_\bC}$ be the  map in~\eqref{resolution quiver} and
let $\frak M'_{Z_\bC}\subset\frak M_{Z_\bC}$ be its image. 
Assume $\zeta=(\zeta_\bbR,0)$ is generic in the sense of 
\cite[Definition 2.9]{Nak1}. 
\
\begin{enumerate}
\item
The fiber 
$\frakM'_{\zeta_\bC}$ of the projection $\frakM'_{Z_\bC}\to Z_{\bC}$ over $\zeta_\bC\in Z_\bC$
is a 
union of 
strata.
\item 
$\frak M'_{Z_\bC}$ is 
connected and invariant under the $\GL_n(\bC)\times\bbR^\times$-action.
\item
The map $\tilde\phi_{n,\bC}$~\eqref{phi_n} restricts to a $\GL_n(\bC)\times\bbR^\times$-equivariant isomorphism 
\[\phi_{n,\bC}:\frakM'_{Z_\bC}\is\fg_n\times_{\fc_n}\ft_n\]
of complex algebraic varieties making 
the following diagram commute
\[\xymatrix{\frakM'_{Z_\bC}\ar[r]^{\phi_{n,\bC}}\ar[d]&\fg_n\times_{\fc_n}\ft_n\ar[d]\\
Z_\bC\ar[r]^{\iota_{n,\bC}}&\ft_n}.\]
Furthermore,
the map $\phi_{n,\bC}$ induces stratified isomorphisms between individual fibers 
of the projections $\frakM'_{Z_\bC}\to Z_\bC$ and  $\fg_n\times_{\fc_n}\ft_n\to\ft_n$. Here we equipped the fibers of $\fg_n\times_{\fc_n}\ft_n\to\ft_n$ with the 
$\GL_n(\bC)$-orbits stratification.

\end{enumerate}

\end{prop}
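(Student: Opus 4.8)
The plan is to reduce everything to the well-known presentation of the resolution $\widetilde{\fg_n} \to \fg_n$ (the Grothendieck--Springer simultaneous resolution and its deformation) as a type $A$ quiver variety, following the line of \cite{MV} (and the earlier identifications in \cite{KP, Nak1}). First I would set up the standard identification: for the $\mathrm A_n$-quiver with the chosen dimension vectors $\textbf v = (n,n-1,\dots,1)$ and $\textbf w = (n,0,\dots,0)$, the complex moment map equation $\mu_\bC(X,Y,x,y) = -\zeta_\bC$ at the vertices, together with the stability condition for generic $\zeta = (\zeta_\bbR, 0)$, forces the maps $X$ along the quiver to be surjective (equivalently, $Y$ injective) at each arrow. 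This is exactly Nakajima's analysis of stability for linear quivers: a stable point in $\frak M_{(\zeta_\bbR, Z_\bC)}$ has injective $Y_h$'s, so the tuple $(Y_h)$ realizes a full flag of subspaces $0 \subset V_{n-1} \hookrightarrow \cdots \hookrightarrow V_1 \hookrightarrow V_0 = \bC^n$ inside $\bC^n$, and the composite $N := yx \in \frakgl_n(\bC)$ preserves this flag because the moment map relations $[X_h,Y_h] + (\text{boundary terms}) = -\zeta_k$ propagate the endomorphism down the flag. The parameter $\zeta_\bC$ records the differences of the (generalized) eigenvalues of $N$ along successive quotients of the flag, which under the map $\iota_{n,\bC}$ of \eqref{iota} — sending $\zeta_\bC$ to the partial sums $c_i = \zeta_1 + \cdots + \zeta_i$ — recovers precisely the point of $\ft_n$ lying over $\chi_n(N) \in \fc_n$. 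This gives the map $\phi_{n,\bC}$ and the commuting square with $\iota_{n,\bC}$; $\GL_n(\bC) \times \bbR^\times$-equivariance is immediate from the formula $[X,Y,x,y] \mapsto (yx, \zeta_\bC)$ and the actions recorded before the statement.

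Next I would establish that $\phi_{n,\bC}$ is an isomorphism onto $\fg_n \times_{\fc_n} \ft_n$. In one direction: given $(N, \tau) \in \fg_n \times_{\fc_n} \ft_n$, the datum $\tau \in \ft_n$ orders the eigenvalues of $N$ and hence determines a canonical $N$-stable full flag (take successive kernels of $\prod (N - \tau_i)$ truncations), and reading off $X, Y, x, y$ from the flag and the restriction/corestriction of $N$ produces a point of $\frak M'_{Z_\bC}$; one checks the moment map equations hold with the $\zeta_\bC$ dictated by the consecutive differences of the $\tau_i$. These two constructions are mutually inverse and algebraic, so $\phi_{n,\bC}$ is an isomorphism of complex algebraic varieties. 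Restricting to a fixed $\zeta_\bC \in Z_\bC$ and correspondingly a fixed fiber of $\fg_n \times_{\fc_n} \ft_n \to \ft_n$, the same bijection is a $\GL_n(\bC)$-equivariant isomorphism; since the orbit-type stratification of $\frak M'_{\zeta_\bC}$ is by conjugacy class of stabilizer in $H_u = \prod \on{U}(k)$ and this matches the $\GL_n(\bC)$-orbit type of $N$ (both are governed by the Jordan type of $N$ relative to the fixed flag), $\phi_{n,\bC}$ is a stratified isomorphism on fibers, which also yields (1): each $\frak M'_{\zeta_\bC}$ is a union of orbit-type strata because its image under $\phi_{n,\bC}$ is a union of $\GL_n(\bC)$-orbits. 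Statement (2), connectedness and $\GL_n(\bC) \times \bbR^\times$-invariance of $\frak M'_{Z_\bC}$, then follows since $\fg_n \times_{\fc_n} \ft_n$ is irreducible (it is the total space of the Grothendieck simultaneous resolution, which is smooth and connected) and the group action is the transported one.

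The main obstacle I anticipate is the careful bookkeeping in the first step: correctly extracting from the generic-parameter stability condition that the quiver maps degenerate into an honest flag, and then matching the moment-map relations at each vertex with the condition that $N = yx$ is block-triangular with the prescribed eigenvalue differences — in other words, pinning down exactly how $\zeta_\bC$ sits inside $\ft_n$ via $\iota_{n,\bC}$ and why the partial-sum normalization (rather than some other affine reparametrization) is forced. Getting the identification of stabilizers right — $\on{U}(k)$-stabilizer type on the quiver side versus $\GL_n(\bC)$-conjugacy type on the matrix side, compatibly with the stratification declared in part (3) — is the other place where one must be precise rather than routine. Everything else (equivariance, the commuting square, irreducibility of the Grothendieck resolution) is standard once the dictionary is in place, and I would cite \cite{MV} and \cite{Nak1} for the parts that are genuinely their theorems rather than reproving them.
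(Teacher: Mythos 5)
There are genuine gaps, and they stem from a conflation of two different spaces. Your inverse construction ("given $(N,\tau) \in \fg_n \times_{\fc_n} \ft_n$, the datum $\tau \in \ft_n$ orders the eigenvalues of $N$ and hence determines a canonical $N$-stable full flag") fails precisely on the singular locus of $\fg_n \times_{\fc_n} \ft_n$: when $N$ is nilpotent (or more generally non-regular) and the corresponding coordinates of $\tau$ coincide, there is no canonical $N$-stable full flag — the set of such flags is the Springer fiber, which is positive-dimensional. What your flag picture actually describes is the Grothendieck--Springer resolution $\widetilde{\fg}_n = G \times^B \fb_n$, and the parenthetical claim that "$\fg_n\times_{\fc_n}\ft_n$ is the total space of the Grothendieck simultaneous resolution, which is smooth and connected" is incorrect: $\widetilde{\fg}_n$ maps properly and birationally \emph{onto} $\fg_n\times_{\fc_n}\ft_n$, but the latter is singular. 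Correspondingly, the stability-forces-a-flag analysis applies to the resolved quiver variety $\frak M_{(\zeta_\bbR, Z_\bC)}$ (whose points are represented by stable data with injective $Y_h$'s), not to its image $\frak M'_{Z_\bC}$ at zero real moment parameter, which is what $\phi_{n,\bC}$ is defined on.

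The paper sidesteps both difficulties. It does not attempt to construct an inverse; instead it cites \cite[Theorem 6.1]{MV} to conclude that $\phi_{n,\bC}$ restricts to a bijection on each fiber over $Z_\bC$, establishes that $\frak M'_{Z_\bC}$ is connected (using that the $\bbR^\times$-action is contracting with a unique fixed point — not irreducibility of the target, which would be circular before the isomorphism is known), and then invokes normality of $\fg_n\times_{\fc_n}\ft_n$ together with Zariski's main theorem to upgrade the bijection to an isomorphism of varieties. Part (1) is cited directly from \cite[Corollary 6.11]{Nak1}. Finally, the stratification matching is not by comparing stabilizer types as you suggest (which would require knowing the dictionary between $\on{U}(k)$-stabilizers and Jordan types, itself nontrivial), but by a soft argument: each fiber of $\fg_n\times_{\fc_n}\ft_n \to \ft_n$ has finitely many $\GL_n(\bC)$-orbits, non-closed orbit closures are singular, and each stratum $\frak M_{\zeta_\bC,(L)}$ is smooth and connected, so its image under the isomorphism must be a single orbit. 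You would need to replace your explicit-inverse step with the normality/ZMT argument (or some other mechanism handling non-regular $N$) for the proof to go through.
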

\begin{proof}
Part (1) follows from \cite[Corollary 6.11]{Nak1}.
Proof of (2) and (3).
Since 
each stratum $\frakM_{\zeta,(L)}$ is invariant under the 
$\GL_n(\bC)\times\bbR^\times$-action part (1) implies 
$\frakM'_{Z_\bC}$ also has this property.
Moreover, since the $\bbR^\times$-action on $\frak M'_{Z_\bC}$
is a contracting action with a unique fixed point, 
$\frak M'_{Z_\bC}$ is connected.
By the result of Mirkovic-Vybornov \cite[Theorem 6.1]{MV},
which is a generalization the earlier results of Kraft-Procesi \cite{KP} and Nakajima \cite{Nak1},  
the map $\phi_{n,\bC}$ induces isomorphisms between individual fibers 
of the projections $\frakM'_{Z_\bC}\to Z_\bC,\fg_n\times_{\fc_n}\ft_n\to\ft_n$,
and hence is a bijection.
Since $\fg_n\times_{\fc_n}\ft_n$ is normal and $\frak M'_{Z_\bC}$ is connected it follows that 
$\phi_{n,\bC}$ is an isomorphism algebraic varieties.
We claim that $\phi_{n,\bC}$ maps each strata
$\frak M_{\zeta_\bC,(L)}$ isomorphically to a $\GL_n(\bC)$-orbit. For this we observe that 
there are only finitely many $\GL_n(\bC)$-orbits on the fibers of $\fg_n\times_{\fc_n}\ft_n\to\ft_n$
and the closure of any non-closed orbit is singular. 
Since each stratum $\frak M_{\zeta_\bC,(L)}$ is smooth and connected
it follows that 
$\phi_{n,\bC}(\frak M_{\zeta_\bC,(L)})$ is a single $\GL_n(\bC)$-orbit.
The claim follows and the proofs of (2) and (3) are complete.

\end{proof}

\subsection{Reflection functors}
Let $\on{C}=(\on{C}_{kl})_{1\leq k,l\leq n}$ be the Cartan matrix of type $\on{A}_n$.
Identify $Z_\bC$ with $\bC^n$ and consider the reflection representation of 
the Weyl group $\rW$ on $Z_\bC$.
For any simple reflection $s_k, k\in[1,n]$
and $\zeta_\bC=(\zeta_1,...,\zeta_n)\in\bZ_\bC$, we have 
$s_k(\zeta_\bC)=\zeta_\bC'$ where $\zeta'_l=\zeta_l-\on{C}_{kl}\zeta_k$.

In \cite{Nak1}, Nakajima associated to each $k\in[1,n]$
a certain hyper-K\"ahler isometry $S_k:\frak M_{\zeta_\bC}(\textbf v,\textbf w)\is\frak M_{\zeta'_\bC}(\textbf v',\textbf w)$
called the \emph{reflection functor}.
Here $\zeta'_\bC=s_k(\zeta_\bC)$ 
and $\textbf v'$ is given by
$v_k'=v_k-\sum_l\on{C}_{kl}v_l+w_k$, $v_l'=v_l$ if $l\neq k$
for $\textbf v=(v_1,...,v_n)$, $\textbf w=(w_1,...,w_n)$.
Moreover, it is shown in \emph{loc. cit.} that 
the reflection functors $S_k$ satisfy the Coxeter relations of the Weyl group.

In the case $\textbf v=(n,n-1,...,1)$ and $\textbf w=(n,0...,0)$,
a direct calculation shows that, 
for $k\in[2,n]$, 
 we have 
$\textbf v=\textbf v'$ and hence 
$S_k:\frak M_{\zeta_\bC}(\textbf v,\textbf w)\is\frak M_{\zeta'_\bC}(\textbf v,\textbf w)$.
Let $\on{S}_n\subset\rW$ be the subgroup generated by
the simple reflections $s_2,...,s_n$.
As $\zeta_\bC$ varies over $Z_\bC$, the 
reflection functors 
$S_2,..,S_n$ define a 
$\on{S}_n$-action on
$\frak M_{Z_\bC}=\bigcup_{\zeta_\bC\in Z_\bC}\frak M_{\zeta_\bC}(\textbf v,\textbf w)$
such that the projection map 
$\frak M_{Z_\bC}\to Z_\bC$ is
$\on{S}_n$-equivariant.

\begin{lemma}\label{S_n action}
The subset
$\frak M_{Z_\bC}'\subset\frak M_{Z_\bC}$ is 
invariant under the $\on{S}_n$-action and 
the isomorphism 
$\phi_{n,\bC}:\frak M_{Z_\bC}'\is\fg_n\times_{\fc_n}\ft_n$
is $\on{S}_n$-equivariant.
\end{lemma}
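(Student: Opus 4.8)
The plan is to reduce the statement to a computation on the torus side, where the $\mathrm{S}_n$-action is transparent, using the fact that $\phi_{n,\bC}$ is already known (Proposition \ref{MV}) to be an isomorphism of varieties over $\ft_n$. First I would recall that on the target $\fg_n\times_{\fc_n}\ft_n$ the symmetric group $\mathrm{S}_n$ acts through its action on $\ft_n$ (permuting the diagonal entries, equivalently permuting eigenvalues-with-a-chosen-ordering), and that this action is the restriction of the $\rW_{\GL_n}=\mathrm{S}_n$-action to the stabilizer $\mathrm{S}_n\subset\rW$ of the grading weights — indeed under the identification $\iota_{n,\bC}:Z_\bC\to\ft_n$, $\zeta_\bC=(\zeta_1,\dots,\zeta_n)\mapsto(c_1,\dots,c_n)$ with $c_i=\zeta_1+\cdots+\zeta_i$, the reflection $s_k$ ($k\in[2,n]$) acts on $Z_\bC$ by $\zeta'_l=\zeta_l-\mathrm C_{kl}\zeta_k$, and one checks directly that this corresponds under $\iota_{n,\bC}$ to the transposition $(k-1,k)$ of the coordinates $c_{k-1},c_k$ (the partial sums change exactly so that $c_{k-1}$ and $c_k$ swap while all others are fixed). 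So the $\mathrm{S}_n$-action on the base $Z_\bC$ matches, via $\iota_{n,\bC}$, the $\mathrm{S}_n\subset\rW_{\GL_n}$ action on $\ft_n$, and the bottom square of Proposition \ref{MV}(3) is already $\mathrm{S}_n$-equivariant.

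Next I would show $\frak M'_{Z_\bC}$ is $\mathrm{S}_n$-invariant. Since each reflection functor $S_k$ is a hyper-K\"ahler isometry intertwining strata of stabilizer type (it sends $\frak M_{\zeta_\bC,(L)}$ to $\frak M_{s_k\zeta_\bC,(L')}$ for the corresponding type, by Nakajima's construction), and since $\frak M'_{Z_\bC}$ is, by Proposition \ref{MV}(1), a union of strata in each fiber, it suffices to see that the reflection functors preserve the property of "lying in the image of $\pi_{Z_\bC}$". This I would deduce from the compatibility of reflection functors with the resolution $\pi$: Nakajima's $S_k$ are defined simultaneously on $\frak M_{\zeta_\bC}$ and on the nonzero-real-parameter space $\frak M_{(\zeta_\bbR,\zeta_\bC)}$ (where they are genuine isomorphisms), and they commute with the projection $\pi$ in \eqref{resolution}. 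Hence $S_k$ carries $\image(\pi_{Z_\bC})=\frak M'_{Z_\bC}$ to $\image(\pi_{Z_\bC})=\frak M'_{Z_\bC}$. This gives the first assertion.

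Finally, for the equivariance of $\phi_{n,\bC}$: consider the composite isomorphism $g:=\phi_{n,\bC}\circ s_k\circ\phi_{n,\bC}^{-1}$ of $\fg_n\times_{\fc_n}\ft_n$ with itself and compare it with the action of the transposition $\sigma:=(k-1,k)\in\mathrm{S}_n$. Both $g$ and $\sigma$ cover the same map on the base $\ft_n$ (by the base compatibility established above), both are $\GL_n(\bC)$-equivariant (reflection functors are $\GL(W)$-equivariant, and $\phi_{n,\bC}$ is $\GL_n(\bC)$-equivariant by Proposition \ref{MV}(3)), and both are isomorphisms of algebraic varieties over $\ft_n$. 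On a generic fiber $\chi_n^{-1}(\xi)\times\{t\}$ with $t$ having distinct entries, the fiber is a single closed $\GL_n(\bC)$-orbit $\cong\GL_n/T$ (regular semisimple), and a $\GL_n$-equivariant automorphism of it covering the identity on the point $t$ is determined by where it sends the chosen base point, i.e.\ by an element of the normalizer data; here one checks $g$ and $\sigma$ agree because both are induced by the same Weyl-group element acting on the Cartan — concretely, trace through the definition of $\tilde\phi_{n,\bC}$, $[X,Y,x,y]\mapsto(yx,\zeta_\bC)$, and Nakajima's formula for $S_k$ to see that $yx$ transforms by conjugation by a representative of $\sigma$. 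Since $g$ and $\sigma$ are two morphisms of the irreducible (normal) variety $\fg_n\times_{\fc_n}\ft_n$ agreeing on the dense open locus of regular semisimple fibers, they coincide everywhere; as the $s_k$ generate $\mathrm{S}_n$, this proves $\phi_{n,\bC}$ is $\mathrm{S}_n$-equivariant.

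The main obstacle I anticipate is the last identification — matching the reflection functor $S_k$ with the concrete permutation $\sigma$ at the level of the map $\tilde\phi_{n,\bC}$. The cleanest route avoids an explicit matrix computation of $S_k$: one uses that $\phi_{n,\bC}$ is already known to be an isomorphism over $\ft_n$ and that $\mathrm{S}_n$ acts faithfully on $\ft_n$ compatibly on both sides, so the only ambiguity is a fiberwise $\GL_n(\bC)$-equivariant automorphism covering the identity on the base, i.e.\ a locally constant section of the centralizer; connectedness of $\frak M'_{Z_\bC}$ (Proposition \ref{MV}(2)) and triviality of the generic centralizer then force it to be trivial. Making this rigidity argument airtight — in particular that "agree on generic semisimple fibers $\Rightarrow$ agree" — is where care is needed, but it is exactly the kind of argument already used in the proof of Proposition \ref{MV}.
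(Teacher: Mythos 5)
Your first step — checking that $\iota_{n,\bC}$ intertwines the simple reflection $s_k$ (for $k\geq 2$) with the transposition $\sigma_{k-1,k}$ on $\ft_n$ — is exactly what the paper does, and is correct. The two remaining steps, however, take different routes from the paper and each has a genuine gap.

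\textbf{Invariance of $\frak M'_{Z_\bC}$.} You want the reflection functors to preserve $\image(\pi_{Z_\bC})$, invoking compatibility of $S_k$ with $\pi$. But $\pi_{Z_\bC}$ is defined with respect to a \emph{fixed} generic real parameter $\zeta_\bbR$, and Nakajima's $S_k$ moves the pair $(\zeta_\bbR,\zeta_\bC)$ to $(s_k\zeta_\bbR,s_k\zeta_\bC)$, so the natural square reads
$\frak M_{(\zeta_\bbR,\cdot)}\xrightarrow{S_k}\frak M_{(s_k\zeta_\bbR,\cdot)}$ over
$\frak M_{Z_\bC}\xrightarrow{S_k}\frak M_{Z_\bC}$. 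At best this yields $S_k(\image\pi_{\zeta_\bbR})\subseteq\image\pi_{s_k\zeta_\bbR}$, and you would still need that the image is independent of the choice of generic chamber of $\zeta_\bbR$ — a nontrivial claim you neither state nor justify. The paper avoids this altogether: over the open dense locus $Z_\bC^0$ of parameters with trivial $\on{S}_n$-stabilizer, $\pi_{Z_\bC}$ is an isomorphism by \cite[Theorem 4.1]{Nak2}, so $\frak M'_{Z^0_\bC}$ is the full preimage of the $\on{S}_n$-invariant set $Z_\bC^0$ and hence manifestly invariant; properness of $\pi_{Z_\bC}$ makes $\frak M'_{Z_\bC}$ closed; and density of $\frak M'_{Z^0_\bC}$ in $\frak M'_{Z_\bC}$ (a consequence of Proposition~\ref{MV}) forces $\frak M'_{Z_\bC}$ to be the closure of the invariant set $\frak M'_{Z^0_\bC}$.

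\textbf{Equivariance of $\phi_{n,\bC}$.} The rigidity argument is the weak point. A regular semisimple fiber of $\fg_n\times_{\fc_n}\ft_n\to\ft_n$ is a single orbit $\cong\GL_n(\bC)/T$, and the $\GL_n(\bC)$-equivariant automorphisms of $\GL_n(\bC)/T$ form a group isomorphic to $N(T)/T\cong\on{S}_n$, which is \emph{not} trivial. So ``covers the same base map and is $\GL_n(\bC)$-equivariant'' determines the map only up to this $\on{S}_n$-ambiguity; connectedness of the regular semisimple locus only pins it down to a single Weyl element $w$, not to $w=e$. Your phrase ``triviality of the generic centralizer'' conflates the stabilizer (a maximal torus, not trivial) with the group of equivariant automorphisms (its Weyl group, also not trivial). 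Moreover, your parenthetical — that ``$yx$ transforms by conjugation by a representative of $\sigma$'' — misstates what happens: the $\on{S}_n$-action on the $\fg_n$-factor is trivial, and the crucial computational input from \cite[Section 3(i)]{Nak2} is that for $k\geq 2$ the reflection functor leaves the legs $x,y$ \emph{unchanged}, so $yx$ is literally fixed, not conjugated. This explicit formula is precisely what the paper uses to close the gap, and some such computation (or an extra piece of structure you don't invoke) is unavoidable here: without it, $\phi_{n,\bC}\circ S_k\circ\phi_{n,\bC}^{-1}$ and $\sigma_{k-1,k}$ could a priori differ by a nontrivial Weyl element.
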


\begin{proof}
We first claim that the map $\tilde\phi_{n,\bC}:\frakM_{Z_\bC}\to\fg_n\times\ft_n$~\eqref{phi_n}
is $\on{S}_n$-equivariant.
Recall the isomorphism $\iota_{n,\bC}:Z_\bC\is\ft_n$ in~\eqref{iota}.
A direct computation shows that 
$\iota_{n,\bC}$ intertwines the action of 
$s_k$ and the simple reflection $\sigma_{k-1,k}\in\on{S}_n$
for $k\geq 2$.
On the other hand,
the formula for the reflection functors in \cite[Section 3(i)]{Nak2}
implies that, for any $[X,Y,x,y]\in\frak M_{Z_\bC}$, 
we have $S_k([X,Y,x,y])=[\tilde X,\tilde Y,x,y]$ for $k\geq 2$.
All together we see that 
\[\tilde\phi_{n,\bC}(S_k([X,Y,x,y]))=\tilde\phi_{n,\bC}([X',Y',x,y])=(yx,\sigma_{k-1,k}(\iota_{n,\bC}(\zeta_\bC)))=\sigma_{k-1,k}(yx,\iota_{n,\bC}(\zeta_\bC))=\]
\[=\sigma_{k-1,k}(\tilde\phi_{n,\bC}([X,Y,x,y])).\]
The claim follows.
To complete the proof of the lemma, we need to show that 
$\frak M_{Z_\bC}'$ is $\on{S}_n$-invariant.
Let $Z_{\bC}^0\subset Z_\bC$ (resp. $\ft_n^0\subset\ft_n$ )be the open dense subset consisting of 
vectors with trivial stabilizers in $\on{S}_n$.
The isomorphism $\phi_{n,\bC}$ induces an isomorphism 
$\frak M'_{Z^0_\bZ}\is\fg_n\times_{\fc_n}\ft_n^0$, where
$\frak M'_{Z^0_\bZ}=\frak M_{Z_\bC}'\times_{Z_\bC}Z_{\bC}^0$,
and it follows that 
$\frak M'_{Z_\bC^0}$ is open dense in $\frak M_{Z_\bC}'$
and the 
fibers of the projection $\frak M'_{Z^0_\bZ}\to Z_\bC^0$ 
are smooth. According to
\cite[Theorem 4.1]{Nak2}, the map 
$\pi_{Z_\bC}:\frak M_{\tilde Z_\bC}\to\frak M_{Z_\bC}$
is an isomorphism over $\frak M_{Z^0_\bZ}=\frak M_{Z_\bZ}\times_{Z_\bC}Z^0_{\bC}$ and it
follows that 
$\frak M'_{Z^0_\bZ}=\frak M_{Z_\bC}\times_{Z_\bC}Z_\bC^0$, which is $\on{S}_n$-invariant. On the other hand,
the same argument as in the proof of \cite[Theorem 4.1(1)]{Nak2}
shows that 
the map 
$\pi_{Z_\bC}:\frak M_{(\zeta_\bbR,Z_\bC)}\to\frak M_{Z_\bC}$ is proper and hence its image $\frak M_{Z_\bC}'=\pi_{Z_\bC}(\frak M_{(\zeta_\bbR,Z_\bC)})\subset\frak M_{Z_\bC}$ is a closed subset.
Thus $\frak M_{Z_\bC}'$ is equal to the closure of 
$\frak M_{Z^0_\bC}'$ in $\frak M_{Z_\bC}$ and, as $\frak M_{Z^0_\bC}'$
is $\on{S}_n$-invariant, it implies $\frak M_{Z_\bC}'$ is $\on{S}_n$-invariant.
The lemma follows.

\end{proof}

\subsection{Involutions on the spaces of matrices with real eigenvalues}
Let $\frakM_Z'\subset\frak M_{Z}$ be the image of 
$\pi_Z:\frak M_{\tilde Z}\to\frak M_Z$ and let 
$\fg_n\times_{\fc_n}i\ft_{n,\bbR}$
where $i\ft_{n,\bbR}\subset\ft_n$
is the $\bbR$-subspace consisting of diagonal matrices with pure imaginary entries.
Then the isomorphisms $\phi_{n,\bC}$ and $\iota_{n,\bC}$
above 
restricts to isomorphisms 
\beq\label{res iso}
\frakM_Z'\is\fg_n\times_{\fc_n}i\ft_{n,\bbR}\ \ \ \ \ \ \  Z\is i\ft_{n,\bbR}
\eeq
Consider the family of involutions
$\alpha_a:\frakM_{Z}\to\frakM_Z$ in Proposition \ref{family of involutions quiver}
associated to the split conjugations in Section \ref{split conjugations} 
and the map $\beta:\frak M_Z\to\frak M_Z$ in Proposition \ref{Cartan involution}.
Note that the action of $-1\in\bbR^\times$ on $\frak M_Z$ is trivial
(it becomes the action of $1=(-1)^2$ on
$\fg_n\times_{\fc_n}i\ft_{n,\bbR}$) thus, by Proposition \ref{Cartan involution} (1),
$\beta$ is an involution. 
Note also that the fibers of the projection $\frakM_Z'\to Z$ 
are union of strata (Proposition \ref{MV} (1)), thus Proposition \ref{family of involutions quiver} (3) and Proposition \ref{Cartan involution} (3)
imply that 
$\frak M_Z'$ is invariant under the involutions $\alpha_a$ and $\beta$.

To relate $\frak M_Z'$ with matrices with real eigenvalues let us consider the 
following composition 
\beq
\phi_n:\frak M_Z'\stackrel{~\eqref{res iso}}\is\fg_n\times_{\fc_n}i\ft_{n,\bbR}\is\fg_n\times_{\fc_n}\ft_{n,\bbR}
\eeq
\beq 
\iota_n:Z\stackrel{
~\eqref{res iso}}\is i\ft_{n,\bbR}\is\ft_{n,\bbR}
\eeq
where the second isomorphisms are given by 
$\fg_n\times_{\fc_n}i\ft_{n,\bbR}\is\fg_n\times_{\fc_n}\ft_{n,\bbR}, (x,v)\to (ix,iv)$
and 
$i\ft_{n,\bbR}\to\ft_{n,\bbR}, v\to iv.$
Note that  
 the following diagram is commutative
\beq\label{MV iso over Z}
\xymatrix{\frakM'_{Z}\ar[r]^{\phi_n\ \ \ \ }\ar[d]&\fg_n\times_{\fc_n}\ft_{n,\bbR}\ar[d]\\
Z\ar[r]^{\iota_n}&\ft_{n,\bbR}}
\eeq
where the vertical arrows are the natural projections.

Now the isomorphism $\phi_n:\frak M_Z'\is\fg_n\times_{\fc_n}\ft_{n,\bbR}$  gives rise to involutions on $\fg_n\times_{\fc_n}\ft_{n,\bbR}$: 
\beq\label{induced map 1}
\tilde\alpha_{n,a}=\phi_n\circ\alpha_a\circ\phi_n^{-1}:\fg_n\times_{\fc_n}\ft_{n,\bbR}\to\fg_n\times_{\fc_n}\ft_{n,\bbR} \ \ a\in[0,1]
\eeq
\beq\label{induced map 2}
\tilde\beta_n=\phi_n\circ\beta\circ\phi_n^{-1}:\fg_n\times_{\fc_n}\ft_{n,\bbR}\to\fg_n\times_{\fc_n}\ft_{n,\bbR}
\eeq

\begin{lemma}\label{S_n action}
\
\begin{enumerate}
\item
The involutions $\tilde\beta_n$
is given 
by $\tilde\beta_n(M,v)=(-M^t,-v)$.
In particular, $\tilde\beta_n$
commutes with the action of the 
symmetric group $\on{S}_n$ on $\fg_n\times_{\fc_n}\ft_{n,\bbR}$.
\item
The involution
$\tilde\alpha_{n,a}$ commutes with the action of the 
symmetric group $\on{S}_n$ on $\fg_n\times_{\fc_n}\ft_{n,\bbR}$.

\end{enumerate}
\end{lemma}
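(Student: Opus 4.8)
The plan is to establish (1) by transporting $\beta$ through the quiver description to an explicit formula, and (2) by reducing it, via the $\on{S}_n$-equivariance of $\phi_{n,\bC}$ proved above, to the assertion that $\alpha_a$ commutes with the reflection functors $S_2,\dots,S_n$. For (1), recall that the split conjugation is induced by entrywise complex conjugation $\eta_{\bfM}(X,Y,x,y)=(\bar X,\bar Y,\bar x,\bar y)$ on $\bfM$, and that $\phi(j)$ acts on $\bfM$ by the complex structure $J$ of~\eqref{J}. Hence on quiver data $\beta=\phi(j)\circ\eta_Z$ sends $[X,Y,x,y]$ to $[(-\bar Y^\dagger,\bar X^\dagger,-\bar y^\dagger,\bar x^\dagger)]=[(-Y^t,X^t,-y^t,x^t)]$, using $(\bar A)^\dagger=A^t$. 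Applying $\tilde\phi_{n,\bC}$ of~\eqref{phi_n}, the $\fg_n$-coordinate $yx$ goes to $x^t(-y^t)=-(yx)^t$, while by Proposition~\ref{Cartan involution}(3) the $Z$-coordinate $\zeta_\bC$ goes to $-\zeta_\bC$; feeding these through the $i$-twists built into $\phi_n$ and $\iota_n$ turns them into $M\mapsto-M^t$ and $v\mapsto-v$, so $\tilde\beta_n(M,v)=(-M^t,-v)$. Since the $\on{S}_n$-action on $\fg_n\times_{\fc_n}\ft_{n,\bbR}$ is trivial on the $\fg_n$-factor and is the linear permutation action on $\ft_{n,\bbR}$ under $\iota_n$ (with $s_k\mapsto\sigma_{k-1,k}$), the map $(M,v)\mapsto(-M^t,-v)$ manifestly commutes with it.

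For (2), the previous lemma gives that $\frak M_{Z_\bC}'$ is $\on{S}_n$-stable and $\phi_{n,\bC}$ is $\on{S}_n$-equivariant; restricting the base to the $\on{S}_n$-stable subspace $i\ft_{n,\bbR}\subset\ft_n$ and composing with the $i$-twist (itself $\on{S}_n$-equivariant), $\phi_n$ is $\on{S}_n$-equivariant, so it suffices to show $\alpha_a:\frak M_Z\to\frak M_Z$ commutes with each $S_k$, $k\ge2$. Write $\alpha_a=\phi_s\circ\eta_Z$, $s=\tfrac{a\pi}{2}$. For $k\ge2$ the reflection functor $S_k$ leaves the framing maps $x,y$ fixed and modifies only $X,Y$ at the vertex $k$ by the recipe of~\cite[Section 3(i)]{Nak2}, which is defined over $\bbR$ in the standard bases of the $V_l$; hence $S_k$ commutes with entrywise conjugation, i.e. with $\eta_Z$. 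Moreover $S_k$ is a hyper-K\"ahler isometry intertwining the $\on{Sp}(1)$-actions induced by the $\mathbb H$-module structure on $\bfM$ (\cite{Nak1}), so it commutes with $\phi_s=\phi(q_s)$. Combining, $\alpha_a\circ S_k=S_k\circ\alpha_a$, which gives (2).

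I expect the main obstacle to be this last point: verifying that Nakajima's reflection functors $S_k$ ($k\ge2$) intertwine the hyper-K\"ahler $\on{Sp}(1)$-rotations $\phi_s$, not merely that they preserve the metric together with the fixed complex structure $I$. This should fall out of the construction in~\cite{Nak1}, or, if a direct check is preferred, from the explicit formulas for $S_k$ using that they leave $x,y$ untouched for $k\ge2$. The remaining ingredients --- the entrywise-conjugation computation and keeping track of the two $i$-twists relating $\frak M_Z'$, $\fg_n\times_{\fc_n}i\ft_{n,\bbR}$, and $\fg_n\times_{\fc_n}\ft_{n,\bbR}$ --- are routine bookkeeping.
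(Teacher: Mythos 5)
For part (1), your computation matches the paper's: you apply $\beta=\phi(j)\circ\eta_Z$ to a quiver representative, use~\eqref{J} to get $[-\bar Y^\dagger,\bar X^\dagger,-\bar y^\dagger,\bar x^\dagger]$, observe the $Z$-coordinate flips sign by Proposition~\ref{Cartan involution}(3), and track the $i$-twists through $\phi_n$. That is exactly the paper's proof.

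For part (2), your reduction to $\alpha_a\circ S_k=S_k\circ\alpha_a$ on $\frak M_Z'$ is logically equivalent to the statement (since $\phi_n$ is an $\on{S}_n$-equivariant isomorphism), but the route you propose for proving it has a genuine gap, and you correctly identify it: you want $S_k$ to commute with $\phi_s=\phi(q_s)$, i.e.\ to intertwine the full $\on{Sp}(1)$-rotation, and this is not supplied by the fact that $S_k$ is a hyper-K\"ahler isometry. A hyper-K\"ahler isometry need only preserve the quaternionic structure up to an $\on{SO}(3)$-rotation of $(I,J,K)$, and Nakajima's $S_k$ is constructed holomorphically with respect to the fixed complex structure $I$; that it commutes with the whole $\on{Sp}(1)$-action would require its own argument, which neither \cite{Nak1} nor your sketch supplies. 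Likewise the claim that $S_k$ commutes with $\eta_Z$ because the recipe ``is defined over $\bbR$'' is plausible but not checked. The paper avoids both issues. Its observation is that the map $\tilde\phi_{n,\bC}$ of~\eqref{phi_n} depends on a representative $[X,Y,x,y]$ only through $(yx,\zeta_\bC)$: it forgets $X,Y$ entirely. Since $S_k$ for $k\ge2$ sends $[X,Y,x,y]$ to $[\tilde X,\tilde Y,x,y]$ (leaving $x,y$ fixed) and $\alpha_a$ sends $(x,y)$ to $(x',y')$ by the explicit formulas $x'=i\cos(s)\bar x-i\sin(s)\bar y^\dagger$, $y'=i\cos(s)\bar y+i\sin(s)\bar x^\dagger$ (with no dependence on $X,Y$), both $\phi_n\circ\alpha_a\circ S_k$ and $\phi_n\circ S_k\circ\alpha_a$ evaluate to the same element $i(y'x',s_k(v))$, regardless of what happens to $X,Y$. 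Combined with the $\on{S}_n$-equivariance of $\phi_n$ from the preceding lemma, this gives the commutativity directly and elementarily. Your closing remark (``using that they leave $x,y$ untouched'') gestures at this idea, but you frame it as a way to verify $\on{Sp}(1)$-equivariance of $S_k$, whereas the real point is that one never needs that: the target of $\phi_n$ is insensitive to the part of $\frak M_Z'$ that $S_k$ actually moves.
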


\begin{proof}
Let $(M,v)\in\fg_n\times_{\fc_n}\ft_{n,\bbR}$.
Choose $[X,Y,x,y]\in\frak M'_{\zeta_\bC}
$ such that 
\[\phi_n([X,Y,x,y])=i(yx,\iota_{n}(\zeta_\bC))=(M,v).\]
According to Definition \ref{def of involutions} and Proposition \ref{Cartan involution},
we have 
\[\beta([X,Y,x,y])=\phi(j)\circ\eta_Z([X,Y,x,y])=[-\bar Y^\dagger,\bar X^\dagger,
-\bar y^\dagger,\bar x^\dagger]\in\frak M_{-\zeta_\bC}'\]
It follows that 
\[
\tilde\beta_n((M,v))=\phi_n([-\bar Y^\dagger,\bar X^\dagger,
-\bar y^\dagger,\bar x^\dagger])=i((\bar x^\dagger)(-\bar y^\dagger),-\iota_n(\zeta_\bC))=(-M^t,-v).
\]
Part (1) follows.

According to Definition \ref{def of involutions},
we have 
\[\alpha_a([X,Y,x,y])=(\cos(s)\phi(i)+\sin(s)\phi(k))\circ\eta_Z([X,Y,x,y])=[X',Y',x',y']\in\frak M_{\zeta_\bC}'.\]
where 
\[x'=i\cos(s)\bar x-i\sin(s)\bar y^\dagger,\ \ \ \ y'=i\cos(s)\bar y+i\sin(s)\bar x^\dagger
.\] 
On the other hand, we have 
$S_k([X,Y,x,y])=[\tilde X,\tilde Y,x,y]$. 
Thus
\beq\label{x,y}
\alpha_{a}\circ S_k([X,Y,x,y])=[(\tilde X)',(\tilde Y)',x',y'],\ \ \ \ 
S_k\circ\alpha_{a}([X,Y,x,y])=[\tilde{(X')},\tilde{(Y')},x',y'].
\eeq
Since $\phi_n$ commutes with the $\on{S}_n$-action (Lemma \ref{S_n action}), 
we obtain
\[\tilde\alpha_{n,a}\circ S_k((M,v))=\tilde\alpha_{n,a}\circ S_k\circ\phi_n([X,Y,x,y])=
\phi_n\circ\alpha_a\circ S_k([X,Y,x,y])=i(y'x',s_k(v))\]
\[S_k\circ\tilde\alpha_{n,a}((M,v))=
S_k\circ\alpha_a\circ\phi_n([X,Y,x,y])=\phi_n\circ S_k\circ\alpha_a([X,Y,x,y])=i(y'x',s_k(v)).\]
Part (2) follows.
The proof is complete.

\end{proof}

Let $\fc_{n,\bbR}\subset\fc_n$
be the image of the map $\ft_{n,\bbR}\to\fc_n$ 
and let
$\fg_n'=\fg_n\times_{\fc_n}\fc_{n,\bbR}\subset\fg_n$.
Note that both $\fc_{n,\bbR}$ and $\fg_n'$ are semi-algebraic sets. 
We have 
\beq
\fg_n'=\big\{x\in\fg_n|\text{\ \ eigenvalues of\ }x\text{\ are real}\big\}.
\eeq
Since the natural map 
$\fg_n\times_{\fc_n}\ft_{n,\bbR}\to\fg_n'=\fg_n\times_{\fc_n}\fc_{n,\bbR}$
is $\on{S}_n$-equivariant (where $\on{S}_n$-acts trivially on $\fg_n'$), 
Lemma \ref{S_n action} implies that the involutions $\tilde\alpha_{n,a}$ and 
$\tilde\beta_n$
in \eqref{induced map 1}  and~\eqref{induced map 2} 
descend to a continuous family of 
involutions
on $\fg_n'$:
\beq
\alpha_{n,a}:\fg_n'\to\fg_n'
\eeq
compatible with projections to 
$\fc_{n,\bbR}$ and an involution
\beq
\beta_n:\fg_n'\to\fg_n'.
\eeq
Moreover, $\beta_n$ is equal to the restriction of the Cartan involution on $\fg_n$ to $\fg_n'$: 
\beq\label{beta=Cartan}
\beta_n(M)=-M^t
\eeq

\begin{thm}\label{family of involutions for g_n'}
The continuous one-parameter families of maps
\[\alpha_{n,a}:\fg_n'\lra\fg_n',\ a\in[0,1]\] satisfying the following:
\begin{enumerate}
\item $\alpha_{n,a}^2$ is equal to the identity map, for all $a\in[0,1]$.
\item At $a=0$, we have $\alpha_{n,0}(M)=\overline M$.
\item At $a=1$, we have $\alpha_{n,1}(M)=M^t$.
\item $\alpha_{n,a}$ is $\on{O}_n(\bbR)$-equivaraint and take a $\GL_n(\bC)$-orbit real analytically to itself.
\item $\alpha_{n,a}$ commutes both with the Cartan involution $\beta_n$
and with the projection map
$\fg_n'\to\fc_{n,\bbR}$, for all $a\in[0,1]$.

\end{enumerate}

\end{thm}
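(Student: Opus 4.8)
The plan is to transport everything through the Mirkovi\'c--Vybornov isomorphism and then descend along the quotient by the symmetric group. By Proposition~\ref{MV}(1) the fibres of $\frak M'_Z\to Z$ are unions of strata, so by Proposition~\ref{family of involutions quiver}(3) and Proposition~\ref{Cartan involution}(3) the subvariety $\frak M'_Z\subset\frak M_Z$ is stable under each $\alpha_a$ and under $\beta$; conjugating by the isomorphism $\phi_n$ of~\eqref{res iso} therefore produces the maps $\tilde\alpha_{n,a}$ and $\tilde\beta_n$ of~\eqref{induced map 1}--\eqref{induced map 2} on $\fg_n\times_{\fc_n}\ft_{n,\bbR}$, and these lie over $\ft_{n,\bbR}$ because $\alpha_a$ and $\beta$ act fibrewise over $Z$ and $\phi_n$ lies over $\iota_n$. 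Since $\tilde\alpha_{n,a}$ and $\tilde\beta_n$ commute with the $\on{S}_n$-action (Lemma~\ref{S_n action}) and $\on{S}_n$ acts trivially on $\fg_n'=\fg_n\times_{\fc_n}\fc_{n,\bbR}$, which is the quotient of $\fg_n\times_{\fc_n}\ft_{n,\bbR}$ by the $\on{S}_n$-action on the second factor, they descend to well-defined maps $\alpha_{n,a}$, $\beta_n$ on $\fg_n'$.

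With this in place the structural assertions follow by transport. Property (1), $\alpha_{n,a}^2=\id$, is Proposition~\ref{family of involutions quiver}(1); the $\on{O}_n(\bbR)$-equivariance in (4) is the $\on{O}(W_\bbR)$-equivariance of $\alpha_a$, since $\on{O}(W_\bbR)=\on{O}_n(\bbR)$ for $\textbf w=(n,0,\dots,0)$; the identity $\beta_n(M)=-M^t$ of~\eqref{beta=Cartan} together with the commutation of $\alpha_{n,a}$ with $\beta_n$ in (5) come from Lemma~\ref{S_n action}(1) and Proposition~\ref{alpha beta commute}, transported and descended; and commutation with the projection $\fg_n'\to\fc_{n,\bbR}$ is automatic because $\tilde\alpha_{n,a}$ covers the identity of $\ft_{n,\bbR}$.

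The technical heart is the endpoint identification, which also gives the remaining part of (4). I would compute using the explicit formulas from the proof of Lemma~\ref{S_n action}: if $[X,Y,x,y]$ lifts $(M,v)$, then $\phi_n$ sends it to $(i\,yx,\iota_n(\zeta_\bbC))$, so $M=i\,yx$, while $\alpha_a([X,Y,x,y])=[X',Y',x',y']$ with $x'=i\cos s\,\bar x-i\sin s\,\bar y^\dagger$ and $y'=i\cos s\,\bar y+i\sin s\,\bar x^\dagger$, where $s=\tfrac{a\pi}{2}$. At $a=0$ this gives $x'=i\bar x$, $y'=i\bar y$, whence $\tilde\alpha_{n,0}(M)=i\,y'x'=-i\,\overline{yx}=\overline{i\,yx}=\bar M$; at $a=1$ it gives $x'=-i\bar y^\dagger$, $y'=i\bar x^\dagger$, and since for the standard hermitian structure $\bar x^\dagger=x^t$ and $\bar y^\dagger=y^t$, we get $\tilde\alpha_{n,1}(M)=i\,y'x'=i\,x^ty^t=i\,(yx)^t=M^t$. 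Descending along $\fg_n\times_{\fc_n}\ft_{n,\bbR}\to\fg_n'$ yields $\alpha_{n,0}(M)=\bar M$ and $\alpha_{n,1}(M)=M^t$, which are (2) and (3). For the orbit statement in (4): by Proposition~\ref{family of involutions quiver}(3) each $\alpha_a$ permutes the strata $\frak M_{\zeta_\bbC,(L)}$ real-analytically, and by Proposition~\ref{MV}(3) these correspond to $\GL_n(\bbC)$-orbits, so each $\alpha_{n,a}$ permutes the $\GL_n(\bbC)$-orbits in a fixed fibre of $\fg_n'\to\fc_{n,\bbR}$ by real-analytic isomorphisms, in particular preserving complex orbit dimension. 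To see each orbit $\mO$ is sent to itself, fix $x\in\mO$; the path $a\mapsto\alpha_{n,a}(x)$ is continuous and stays inside a single fibre of $\chi_n$, which has finitely many orbits, and the union of orbits of dimension $\dim_\bbC\mO$ in that fibre is a disjoint union of finitely many orbits each clopen in it, so the path cannot leave the piece containing $\alpha_{n,0}(x)=\bar x\in\mO$; hence $\alpha_{n,a}(\mO)=\mO$ for all $a$.

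The main obstacle is the endpoint bookkeeping in the third paragraph: one must track the twist by $i$ in the definition~\eqref{res iso} of $\phi_n$, verify that the effects of $\on{Ad}_{q_s}$ and of the split conjugation $\eta_H$ on the base $Z$ cancel so that each $\alpha_a$ is genuinely fibrewise over $Z$ (which is what makes the displayed formulas for $x',y'$ applicable with no base correction), and confirm that the normalizations in Definition~\ref{def of involutions} and Proposition~\ref{MV} are consistent, so that $\phi_n$ really intertwines $\phi(i)\circ\eta_Z$ with $M\mapsto\bar M$ and $\phi(k)\circ\eta_Z$ with $M\mapsto M^t$.
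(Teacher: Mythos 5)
Your proof is correct and follows essentially the same route as the paper: transport the involutions through the Mirkovi\'c--Vybornov isomorphism $\phi_n$, use $\on{S}_n$-equivariance from Lemma~\ref{S_n action} to descend to $\fg_n'$, and verify (2)--(3) by the explicit lift-to-quiver computation with $M=iyx$ and $\bar x^\dagger=x^t$, which matches the paper's computation exactly. The one small divergence is in part~(4): the paper invokes directly that $\alpha_a$ preserves each stratum $\frak M_{\zeta_\bC,(L)}$ (which Proposition~\ref{family of involutions quiver}(3) gives) and that these map to single $\GL_n(\bC)$-orbits by Proposition~\ref{MV}(3), whereas you only use that $\alpha_a$ permutes the strata and then close the gap by a continuity-in-$a$ argument anchored at $\alpha_{n,0}(x)=\bar x\in\mO$; this is slightly more roundabout but also valid, and it does not change the substance.
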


\begin{proof}
Part (1) follows from the construction and Part (5) follows from the 
commutative diagram~\eqref{MV iso over Z}.
Let $\phi'_n:\frak M_Z'\stackrel{\phi_n}\is\fg_n\times_{\fc_{n}}\ft_{n,\bbR}\to\fg'_n$
where the last map is given by
$\fg_n\times_{\fc_{n}}\ft_{n,\bbR}\to\fg_n\times_{\fc_{n}}\fc_{n,\bbR}=\fg_n'$.
Let $M\in\fg_n'$.
Choose
$[X,Y,x,y]\in\frak M_Z'$ such that 
\[M=\phi_n'([X,Y,x,y])=iyx\]
By Definition \ref{def of involutions}, we have 
\[\alpha_{0}([X,Y,x,y])=\phi(i)\circ\eta_Z([X,Y,x,y])=[i\overline X,i\overline Y,i\bar x,i\bar y]\]
\[\alpha_{1}([X,Y,x,y])=\phi(k)\circ\eta_Z([X,Y,x,y])=[-i\bar Y^\dagger,i\bar X^\dagger,-i\bar y^\dagger,i\bar x^\dagger].\]
It follows that 
\[
\alpha_{n,0}([X,Y,x,y])=\phi'_n([i\overline X,i\overline Y,i\bar x,i\bar y])=i(-\bar y\bar x)=\overline M
\]
\[
\alpha_{n,1}([X,Y,x,y])=\phi_n'([-i\overline Y^\dagger,i\overline X^\dagger,-i\bar y^\dagger,i\bar x^\dagger])=
i(\bar x^\dagger\bar y^\dagger)=i(\overline{(yx)}^\dagger)=
i(yx)^t=M^t.
\]
Part (2) and (3) follow.

By Proposition \ref{MV} (3), the isomorphism 
$\phi_n:\frak M_Z'\to\fg_n\times_{\fc_n}\ft_{n,\bbR}$
maps each stratum $\frak M_{\zeta_\bC,(L)}$ real analytically to a 
$\on{GL}_n(\bC)$-orbit. Now part (4) follows from the 
fact the involution $\alpha_a$ on $\frak M_Z'$ is 
$\on{O}_n(\bbR)$-equivariant and $\frak M_{\zeta_\bC,(L)}$
is invariant under $\alpha_a$.

\end{proof}

Let $\fg'_{n,\bbR}$ be the space of
$n\times n$ real matrices with real eigenvalues.
 Let $\fp_n'$ be the space of $n\times n$ symmetric matrices with 
real eigenvalues.
It is clear that 
$\fg'_{n,\bbR}=(\fg'_n)^{\alpha_0}$ and $\fp_n'=(\fg_n')^{\alpha_1}$.

\begin{thm} \label{homeomorphism for g_n} 
There is an $\on{O}_n(\bbR)\times\bbR^\times$-equivariant homeomorphism
\begin{equation}
\xymatrix{
\fg'_{n,\bbR}\ar[r]^{\sim}&\fp_n'
}
\end{equation}
compatible with the natural projections to $\fc_{n,\bbR}$. 
Furthermore, the homeomorphism restricts to a real analytic isomorphism between individual $\GL_n(\bbR)$-orbits and $\on{O}_n(\bbC)$-orbits.
\end{thm}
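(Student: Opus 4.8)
The plan is to obtain the homeomorphism as an instance of the general trivialization in Lemma \ref{fixed points}, applied to the family of involutions $\alpha_{n,a}$ on $\fg_n'$ produced by Theorem \ref{family of involutions for g_n'}. Recall from that theorem that $\fg'_{n,\bbR}=(\fg_n')^{\alpha_{n,0}}$ and $\fp_n'=(\fg_n')^{\alpha_{n,1}}$, and that $\alpha_{n,a}$, $a\in[0,1]$, is a continuous family of $\on{O}_n(\bbR)\times\bbR^\times$-equivariant involutions of $\fg_n'$ commuting with the projection $\chi_n':\fg_n'\to\fc_{n,\bbR}$, equal to complex conjugation at $a=0$ and to transpose at $a=1$.

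The first step is to assemble the hypotheses of Lemma \ref{fixed points}. Put $M=\fg_n'$, $N=\fc_{n,\bbR}$, $f=\chi_n'$, $L=\on{O}_n(\bbR)$, and use the involutions $\alpha_{n,a}$. Stratify $\fg_n'$ by the $\GL_n(\bbC)$-orbits along the fibers of $\chi_n'$: this is semi-algebraic, and by Proposition \ref{MV}(3) together with Lemma \ref{S_n action} it is the pushforward under $\phi_n$ and the finite $\on{S}_n$-quotient of the orbit-type stratification of $\frak M_Z'$, so by Theorem \ref{family of involutions for g_n'}(4) each $\alpha_{n,a}$ preserves every stratum and restricts to a real analytic map on it. Let $\bbR_{>0}$ act on $\fg_n'$ by the reparametrized hyper-K\"ahler scaling $s\mapsto\phi(\sqrt s)$, which under $\phi_n$ is the ordinary dilation $M\mapsto sM$ and whose unique fixed point $0\in\fg_n'$ is itself a stratum; for the norm take the transport under $\phi_n$ of the function $\lvert\lvert-\lvert\lvert$ from Example \ref{norm function} (raised to a suitable power to make it homogeneous of degree one). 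This norm is proper and real analytic, it is $\on{O}_n(\bbR)$- and $\alpha_{n,a}$-invariant because the corresponding function on $\frak M_Z'$ is, and it is $\on{S}_n$-invariant because the reflection functors are hyper-K\"ahler isometries, hence it descends to $\fg_n'$. Lemma \ref{fixed points}(2)--(3) now yields an $\on{O}_n(\bbR)\times\bbR_{>0}$-equivariant stratified homeomorphism
\[
\fg'_{n,\bbR}=(\fg_n')^{\alpha_{n,0}}\risom(\fg_n')^{\alpha_{n,1}}=\fp_n',
\]
compatible with $\chi_n'$ and real analytic on each stratum. To upgrade $\bbR_{>0}$ to $\bbR^\times$ one folds the compact group $\{\pm 1\}$, acting by $M\mapsto -M$, into $L$ in Lemma \ref{fixed points}(3): it commutes with $\chi_n'$, with the scaling, and with every $\alpha_{n,a}$, and it preserves the norm, so the homeomorphism is $\on{O}_n(\bbR)\times\bbR^\times$-equivariant.

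Finally, to see that the homeomorphism restricts to real analytic isomorphisms between individual $\GL_n(\bbR)$- and $\on{O}_n(\bbC)$-orbits, I would use that for $\frakgl_n$ the strata are as simple as possible: each stratum $\mO\subset\fg_n'$ is a single $\GL_n(\bbC)$-orbit by Proposition \ref{MV}(3), its real points $\mO\cap\fg'_{n,\bbR}$ form a single $\GL_n(\bbR)$-orbit (real matrices of a prescribed Jordan type with real eigenvalues are all $\GL_n(\bbR)$-conjugate), and its symmetric points $\mO\cap\fp_n'$ form a single $\on{O}_n(\bbC)$-orbit (complex symmetric matrices of a prescribed Jordan type with real eigenvalues form one $\on{O}_n(\bbC)$-orbit). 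Since Lemma \ref{fixed points} carries the fixed points of each stratum of $\alpha_{n,0}$ real-analytically isomorphically onto the fixed points of the same stratum of $\alpha_{n,1}$, the displayed homeomorphism does exactly what is claimed.

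I expect the main obstacle to be organizational rather than conceptual: verifying that the orbit stratification transported to $\fg_n'$ is genuinely semi-algebraic with the $\alpha_{n,a}$ real analytic on strata, and — the one place a genuine compatibility must be maintained throughout — that $\phi_n$, the involutions $\alpha_a$, the scaling, and the norm are all simultaneously $\on{S}_n$-equivariant, so that everything descends cleanly from $\fg_n\times_{\fc_n}\ft_{n,\bbR}$ to $\fg_n'$; this is precisely the role of Lemma \ref{S_n action}. Once this bookkeeping is in place, the theorem is a direct application of Lemma \ref{fixed points}.
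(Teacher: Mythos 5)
Your argument is correct and follows essentially the same route as the paper's own proof: both deduce the homeomorphism from Lemma \ref{fixed points} applied to the family $\alpha_{n,a}$ of Theorem \ref{family of involutions for g_n'}, using the proper norm of Example \ref{norm function} transported through $\phi_n$ and descended through the $\on{S}_n$-quotient. The remaining differences are presentational --- the paper names the stratification of $\fg_n'$ as the restricted Lusztig stratification where you describe it as the $\on{S}_n$-quotient of the pushed-forward orbit-type stratification of $\frak M_Z'$ (the same strata), the paper averages the norm over $\on{S}_n$ rather than asserting invariance outright, and for the final orbit statement the paper argues via $\on{O}_n(\bbR)$-transitivity on connected components where you invoke the $\GL_n$-specific facts that each stratum meets $\fg'_{n,\bbR}$ and $\fp_n'$ in a single real or symmetric orbit --- none of which changes the structure of the argument, and your explicit treatment of the component $-1\in\bbR^\times$ via part (3) of the lemma is a small point the paper leaves tacit.
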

\begin{proof}

Consider the Lusztig stratification of $\fg_n$. 
The stratum through $g$ with a Jordan decomposition 
$g=s+u$ consists of all $\GL_n(\bC)$-orbits through 
$u+Z_r(\fl)$ where $\fl=Z_{\fg_n}(s)$ is the centralizer of 
$s$ in $\fg_n$ and $Z_r(\fl)=\{x\in Z(\fl)| Z_{\fg_n}(x)=\fl\}$
is the regular part of the center $Z(\fl)$ of $\fl$.
It is clear that the Lusztig stratification
restricts to the orbits stratifications on the fibers of the Chevalley map
$\chi_n:\fg_n\to\fc_n$ and a stratification on 
$\fg_n'=\fg_n\times_{\fc_n}\fc_{n,\bbR}$. 

Recall the $\on{U}(n)$-invariant function $\lvert\lvert-\lvert\lvert:\frak M_Z\to\bbR_{\geq0}$
in Example \ref{norm function}. 
The restriction of $\lvert\lvert-\lvert\lvert$  along the closed embedding $\fg_n\times_{\fc_n}\ft_n\stackrel{\phi_n}\is\frak M_Z'\subset\frak M_Z$ gives rise to 
a function $\fg_n\times_{\fc_n}\ft_n\to\bbR_{\geq0}$.
Its average respect to the $\on{S}_n$-action on $\fg_n\times_{\fc_n}\ft_n$
defines a $\on{S}_n$-invariant function 
$\fg_n\times_{\fc_n}\ft_n\to\bbR_{\geq0}$ which descends to a function
$\lvert\lvert-\lvert\lvert_{\fg_n'}:\fg'_n\to\bbR_{\geq0}$. 
It follows from Theorem \ref{family of involutions for g_n'} and the construction of $\lvert\lvert-\lvert\lvert_{\fg_n'}$ that the function 
$\lvert\lvert-\lvert\lvert_{\fg_n'}$ together with
 the real analytic map $\fg_n'\to\fc_{n,\bbR}$  and the Lusztig stratification on $\fg'_n$  
 satisfy the assumption in Lemma \ref{fixed points}, and hence we obtain a stratified 
$\on{O}_n(\bbR)$-equivariant homeomorphism 
\beq\label{key iso}
\fg_{n,\bbR}'\to\fp'_n
\eeq
which 
are real analytic on each stratum
and compatible with the maps to $\fc_{n,\bbR}$.
Since each stratum in $\fg'_{n,\bbR}$ (resp. $\fp_n'$) is a finite union of $\GL_n(\bbR)$-orbits (resp. $\on{O}_n(\bC)$-orbits)
and $\on{O}_n(\bbR)$-acts simply transitively on connected components 
of each orbits, it follows that 
the homeomorphism~\eqref{key iso} 
restricts to a real analytic isomorphism between individual $\GL_n(\bbR)$-orbits and $\on{O}_n(\bC)$-orbits.
\end{proof}

\subsection{Proof of Theorem \ref{family of involutions for g}}
We shall deduce Theorem \ref{family of involutions for g}
from Theorem \ref{family of involutions for g_n'}.

Let $\fg$ be a simple Lie algebra of classical type with 
real form $\fg_\bbR$.
Recall the classification of real forms of classical types:
\begin{lemma}\cite[Section 4]{OV}\label{classification}
Here is the complete list of all possible quadruple $(\fg_\bbR,\frakk,\eta,\theta)$ (up to isomorphism):
\\
(a) $\fg=\frak{sl}_n(\bC)$:
\begin{enumerate}
\item 
$\frakg_\bbR=\frak{sl}_n(\bbR), \frakk=\frak{so}_n(\bC),\ \eta(g)=\bar g,\ \theta(g)=-g^t$.
\item
$\frakg_\bbR=\frak{sl}_m(\mathbb H), \frakk=\frak{sp}_m(\bC),\ \eta(g)=\on{Ad}S_m(\bar g),\ \theta(g)=-\on{Ad}S_m(g^t)\ \ (n=2m)$.
\item 
$\frakg_\bbR=\frak{su}_{p,n-p}, \frakk=(\frak{gl}_p(\bC)\oplus\frak{gl}_{n-p}(\bC))\cap\frakg,\ \eta(g)=-\on{Ad}I_{p,n-p}(\bar g^t),\ \ \theta(g)=\on{Ad}I_{p,n-p}(g).$
\end{enumerate}
(b) $\frakg=\frak{so}_n(\bC)$:
\begin{enumerate}
\item 
$\frakg_\bbR=\frak{so}_{p,n-p}, \frakk=\frak{so}_p(\bC)\oplus\frak{so}_{n-p}(\bC),\ \eta(g)=\on{Ad}I_{p,n-p}(\bar g),\ \theta(g)=\on{Ad}I_{p,n-p}(g)$.
\item
$\frakg_\bbR=\frak u^*_m(\mathbb H), \frakk=\frak{gl}_m(\bC),\ \eta(g)=\on{Ad}S_m(\bar g),\ \theta(g)=\on{Ad}S_m(g)\ \ (n=2m)$.
\end{enumerate}
(c) $\frakg=\frak{sp}_{n}(\bC)$, $n=2m$:
\begin{enumerate}
\item 
$\frakg_\bbR=\frak{sp}_{2m}(\bbR), \frakk=\frak{gl}_m(\bC),\ \eta(g)=\bar g,\ \theta(g)=\on{Ad}S_{m}(g)$.
\item
$\frakg_\bbR=\frak{sp}_{p,m-p}, \frakk=\frak{sp}_{2p}(\bC)\oplus\frak{sp}_{2m-2p}(\bC),\ \eta(g)=-\on{Ad}K_{p,m-p}(\bar g^t),\ \theta(g)=\on{Ad}K_{p,m-p}(g)$.
\end{enumerate}
Here $S_m=\left(\begin{array}{cc}0&-Id_m\\Id_m&0\end{array}\right)$, 
$I_{p,n-p}=\left(\begin{array}{cc}Id_{p}&0\\0&-Id_{n-p}\end{array}\right)$, and $K_{p,m-p}=\left(\begin{array}{cc}I_{p,m-p}&0\\0&I_{p,m-p}\end{array}\right)$
\end{lemma}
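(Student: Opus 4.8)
The plan is to reduce the classification to the well-known Cartan classification of real forms of complex simple Lie algebras of classical type, and then to exhibit for each item in Élie Cartan's list an explicit model in which both the conjugation $\eta$ and the involution $\theta$ are realized by the stated formulas. Since the abstract classification (the bijection \eqref{bijection}) is already recalled in the introduction and the underlying fact --- that a real form $\fg_\bbR$ determines, up to conjugacy, a commuting pair $(\eta,\delta)$ with $\delta$ a compact conjugation and $\theta = \delta\circ\eta$ --- is standard, the content here is purely the bookkeeping of explicit matrix realizations. I would organize the proof type by type: $\frak{sl}_n(\bC)$, then $\frak{so}_n(\bC)$, then $\frak{sp}_n(\bC)$, citing \cite[Section 4]{OV} for the fact that the list of real forms in each type is exhausted by the cases (1), (2), (3) displayed (respectively (1), (2) and (1), (2)).

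For each case the verification is a short direct computation: first check that the stated $\eta$ is an antilinear involution of $\fg$ (i.e. $\eta^2 = \id$ and $\eta$ is conjugate-linear) whose fixed subalgebra is the asserted $\fg_\bbR$; second check that the stated $\theta$ is a linear involution of $\fg$ with $\fg^\theta = \frakk$; and third check that $\eta$ and $\theta$ commute, equivalently that $\delta := \theta\circ\eta$ is a compact conjugation (its fixed form is a compact real form). For the split-type entries --- $\frak{sl}_n(\bbR)$, $\frak{so}_{p,n-p}$, $\frak{sp}_{2m}(\bbR)$ --- these are immediate from the identities $\overline{(\cdot)}^2 = \id$, $(g^t)^t = g$, $S_m^2 = -\Id$, $I_{p,n-p}^2 = \Id$, $K_{p,m-p}^2 = \Id$, together with the fact that $\ad S_m$, $\ad I_{p,n-p}$, $\ad K_{p,m-p}$ are order-two automorphisms (using that $S_m, I_{p,n-p}, K_{p,m-p}$ normalize $\fg$ inside $\frakgl_n(\bC)$). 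For the quaternionic entries --- $\frak{sl}_m(\mathbb H)$, $\frak u_m^*(\mathbb H)$, $\frak{sp}_{p,m-p}$, $\frak{su}_{p,n-p}$ --- one uses in addition that $\overline{S_m} = S_m$ and $\overline{I_{p,n-p}} = I_{p,n-p}$, so that the composition of $g\mapsto \bar g$ (or $g\mapsto -\bar g^t$) with $\ad$ by one of these matrices is again an antilinear involution, and one identifies its fixed points with the classical quaternionic or pseudo-unitary real form via the standard embedding $\mathbb H\hookrightarrow M_2(\bC)$.

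I do not expect any genuine obstacle here; the only mild subtlety is one of normalization and consistency: one must choose the models so that, within a fixed type, all the listed $(\eta,\theta)$ pairs use a common ambient realization of $\fg$ (e.g. $\frak{so}_n(\bC)$ defined by a fixed symmetric bilinear form, or alternatively by $I_{p,n-p}$ in case (b)(1)), so that the later use of the lemma --- feeding these explicit $\eta$ and $\theta$ into the $\frakgl_n$ constructions of Theorem \ref{family of involutions for g_n'} and checking compatibility with inner automorphisms and with the Cartan involution $\beta_n(M) = -M^t$ --- goes through uniformly. Concretely, I would present $\frak{so}_n(\bC)$ and $\frak{sp}_n(\bC)$ as subalgebras of $\frakgl_n(\bC)$ cut out by explicit forms for which the transpose $M\mapsto M^t$, conjugated appropriately, is visibly the relevant $\theta$, so that the embedding of each classical $\fg_\bbR'$ into $\frakgl_n'(\bC)$ intertwines $\eta$ and $-\theta$ with $\alpha_0$ and $\alpha_1$ from Theorem \ref{family of involutions for g_n'}. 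This matching is what makes the lemma usable in the proof of Theorem \ref{family of involutions for g}, and it is worth stating which model is used even though each individual verification is routine.
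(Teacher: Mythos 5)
Your proposal is correct and matches the paper's approach: the paper gives no proof at all, instead citing \cite[Section 4]{OV} for this classification, and your plan of relying on [OV] for completeness of the list while verifying the displayed $(\eta,\theta)$ formulas by direct computation is a routine elaboration of that citation. The consistency point you raise — fixing ambient models of $\frak{so}_n$ and $\frak{sp}_n$ inside $\frakgl_n(\bC)$ so that $\on{Ad}S_m$, $\on{Ad}I_{p,n-p}$, $\on{Ad}K_{p,m-p}$ lie in $\Orth_n(\bbR)$ and the formulas feed cleanly into $\beta_n(M)=-M^t$ and Theorem~\ref{family of involutions for g_n'} — is exactly what the paper uses implicitly in the proof of Theorem~\ref{family of involutions for g}.
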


Consider the following commutative diagram
\beq\label{res of adj}
\xymatrix{\fg\ar[r]^{\iota_\fg}\ar[d]^{\chi}&\fg_n\ar[d]^{\chi_n}\\
\fc\ar[r]^{\iota_\fc}&\fc_n}
\eeq
where $
\iota_\fg:\fg\to\fg_n$ is the natural embedding 
and
$\iota_\fc:\fc=\fg//G\to\fc_n=\fg_n//\GL_n(\bC)$.
We have the following explicit description of $
\chi$ and $\iota_\fc$.
For any $M\in\fg_n$, let 
\[T^n+c_1T^{n-1}+c_2T^{n-1}+\cdot\cdot\cdot+c_n\]
be the characteristic polynomial of $M$. 
In the case $\fg=\frak{sl}_n(\bC)$, we have $c_1=0$ and 
one can identify 
$\fc$ with $\bC^{n-1}$ so that 
\[\chi(M)=(c_2,c_3,...,c_n)\]
\[\iota_\fc(c_1,...,c_n)=(0,c_2,...,c_n)\]
In the case $\fg=\frak{sp_n}(\bC)$ or $\frak{so}_n(\bC)$
we have $c_1=c_3=\cdot\cdot\cdot=0$ and one can 
choose an identification 
of $\fc=\bC^{[n/2]}$
such that $\chi:\fg\to\fc=\bC^{[n/2]}$
is given by
\[\chi(M)=(c_2,c_4,...,c_{n})\ \ \ \text{\ \ if\ \ } \fg=\frak{sp}_n(\bC)\]
\[\chi(M)=(c_2,c_4,...,c_{n-1})\ \ \ \text{\ \ if\ \ } \fg=\frak{so}_n(\bC)\ \ \ \ n=2m+1\]
\[\chi(M)=(c_2,c_4,...,c_{n-2},\tilde c_{n})\ \ \ \text{\ \ if\ \ } \fg=\frak{so}_n(\bC)\ \ \ \ n=2m\]
where $\tilde c_n=\on{Pf}(M)$ is the Pfaffian of $M$ satisfying 
$\on{Pf}(M)^2=\on{det}(M)=c_n$,
and the map $\iota_\fc$ is given by 
\[\iota_\fc(c_2,c_4,...,c_{n-1})=(0,c_2,0,c_4,...0,c_{n-1})\ \ \ \text{\ \ if\ \ } \fg=\frak{sp}_n(\bC)\]
\[\iota_\fc(c_2,c_4,...,c_{n-1})=(0,c_2,0,c_4,...0,c_{n-1})\ \ \ \text{\ \ if\ \ }\fg=\frak{so}_n(\bC)\ \ \ \ n=2m+1, l=m\]
\beq\label{Pf}
\iota_\fc(c_2,c_4,...,c_{n-2},\tilde c_{n})=(0,c_2,0,c_4,...,0,\tilde c_{n}^2) \ \ \ \text{\ \ if\ \ }\fg=\frak{so}_n(\bC)\ \ \ \ n=2m,  l=m.
\eeq
\begin{remark}\label{even orthogonal}
It follows that the map $\iota_\fc:\fc\to\fc_n$ is a closed embedding except the case 
$\fg=\frak{so}_n$, $n=2m$.
\end{remark}
Recall the 
semi-algebraic sets $\fc_{\fp,\bbR}\subset\fc$ and 
$\fg'=\fg\times_{\fc}\fc_{\fp,\bbR}\subset\fg$ introduced~\eqref{three families}.
Since for any $x\in\fg'$ the eigenvalues of
$\on{ad}_x$ are real, the embedding 
$\fg'\to\fg_n$ factors through $\fg'\to\fg_n'\subset\fg_n$ and 
diagram~\eqref{res of adj} restricts to a digram 
\beq\label{restricted diagram}
\xymatrix{\fg'\ar[r]^{\iota_\fg}\ar[d]^{\chi}&\fg'_n\ar[d]^{\chi_n}\\
\fc_{\fp,\bbR}\ar[r]^{\iota_\fc}&\fc_{n,\bbR}}.
\eeq

Note that Proposition \ref{alpha beta commute}
implies $\alpha_{n,a}\circ\beta_n=\beta_n\circ\alpha_{n,a}$. 
On the other hand, since 
$S_m,I_{p,n-p},K_{p,m-p}\in\on{O}_n(\bbR)$,  Proposition \ref{family of involutions for g_n'} (4) implies that
the involutions $\on{Ad}S_m$, $\on{Ad}I_{p,n-p}$, and $\on{Ad}K_{p,m-p}$ on $\fg_n'$
commute with both 
$\alpha_{n,a}$ and $\beta_n$.
Now a 
direct computation, 
using
the formula of $\theta$ in Lemma \ref{classification}, 
shows that 
the
compositions
\beq\label{def of involution}
\alpha_{n,s}\circ\beta_n\circ\theta:\fg_n'\to\fg'_n\ \ \ \ s\in[0,1].
\eeq
are involutions.
We claim that the subspace $\fg'\subset\fg_n'$ is invariant under the involutions~\eqref{def of involution}.
Consider the involution $\sigma$ on $\fg_n$ such that 
$(\fg_n)^\sigma=\fg$, that is,
$\sigma$ is
given by 
$\sigma=\beta_n$ if $\fg=\frak{so}_n(\bC)$ and $\sigma=\on{Ad}(S_m)\circ\beta_n$ if 
$\fg=\frak{sp}_n(\bC)$.
Since the map~\eqref{def of involution} commutes with the involution $\sigma$,
the $\sigma$-fixed points 
$(\fg_n')^\sigma$  is invariant under the map~\eqref{def of involution}.
The claim follows.

The diagram~\eqref{restricted diagram} implies that $\fg'$ is equal to the base-change 
\beq\label{sigma}
\fg'=(\fg_n')^\sigma\times_{\fc_{n,\bbR}}\iota_\fc(\fc_{\fp,\bbR}).
\eeq
of $(\fg_n')^\sigma$ to the subspace $\iota_\fc(\fc_{\fp,\bbR})\subset\fc_{n,\bbR}$
and hence the maps~\eqref{def of involution}
restrict  to a family of involutions
\beq
\alpha_a:\fg'\to\fg'\ \ \ \ a\in[0,1].
\eeq

We shall show that the map $\alpha_a$ above satisfies properties 
(1) to (5) in Theorem \ref{family of involutions for g}.
Properties (1), (2), (3) of $\alpha_{n,a}$ in Theorem \ref{family of involutions for g_n'} immediately implies that 
$\alpha_a$ satisfies properties (1), (2), (3) in Theorem \ref{family of involutions for g}.
Property (4) follows from the fact that 
the intersection of an adjoint orbit of $\fg_n$ with $\fg$ is a finite disjoint union 
of $G$-orbits and each $G$-orbit is a connected component.
We now check property (5). We need to show that 
$\alpha_a$ preserves the fibers of $\chi:\fg'\to\fc_{\fp,\bbR}$.
Assume $\fg$ is not of type $\on{D}$. Then by Remark \ref{even orthogonal}, the map 
$\fc_{\fp,\bbR}\to\fc_{n,\bbR}$ is a closed embedding and property (5) follows from the one 
for $\alpha_{n,a}$. Assume $\fg=\frak{so}_{n=2m}$. 
Then from the diagram~\eqref{restricted diagram} we see that the involution $\alpha_{a}$
preserves the fibers of  
$\iota_\fc\circ\chi:\fg'\to\fc_{\fp,\bbR}\to\fc_{n,\bbR}$. 
Let $c=(c_2,c_4,...,\tilde c_n)\in\fc_{\fp,\bbR}$.
According to~\eqref{Pf}, if $\tilde c_n=0$ then $\chi^{-1}(c)=(\iota_\fc\circ\chi)^{-1}(\iota_\fc(c))$
and if 
$\tilde c_n=0$ then $(\iota_\fc\circ\chi)^{-1}(\iota_\fc(c))= 
\chi^{-1}(c)\sqcup\chi^{-1}(c')$
where $c'=(c_2,c_4,...,c_{n-2},-\tilde c_n)$.
In the first case, $\chi^{-1}(c)$ is equal to a fiber of 
$\iota_\fc\circ\chi$ 
and hence is invariant under $\alpha_a$.
Consider the second case.
Since $\chi^{-1}(c)$ contain a vector in $\fa_\bbR$ and  
$\alpha_0(M)=M$ for $M\in \fa_\bbR$, it follow that 
 $\alpha_0(\chi^{-1}(c))=\chi^{-1}(c)$.
 Since 
$\chi^{-1}(c)$ and $\chi^{-1}(c')$ are connected components of $(\iota_\fc\circ\chi)^{-1}(\iota_\fc(c))$
we must have $\alpha_a(\chi^{-1}(c))=\chi^{-1}(c)$ for all $a\in[0,1]$.
We are done.
This finishes  the proof of Theorem \ref{family of involutions for g}.

\quash{
Here is the explicit formula for $\alpha_s$ for each classical types:
 \\
(a) $\frakg=\frak{sl}_{n}(\bC)$:
 \begin{enumerate}
\item 
$\frakg_\bbR=\frak{sl}_n(\bbR), \alpha_s=\alpha_{n,s}$.
\item
$\frakg_\bbR=\frak{sl}_m(\mathbb H),\alpha_s=\alpha_{n,s}\circ\on{Ad}(S_m)\ \ (n=2m)$.
\item 
$\frakg_\bbR=\frak{su}_{p,n-p}, \alpha_s=\alpha_{n,s}\circ\on{Ad}I_{p,m-p}\circ\omega$
\end{enumerate}
(b) $\frakg=\frak{so}_{n}(\bC)$:
\begin{enumerate}
\item 
$\frakg_\bbR=\frak{so}_{p,n-p}, \alpha_s=\alpha_{n,s}\circ\on{Ad}I_{p,n-p}|_{\calN}$.
\item
$\frakg_\bbR=\frak u^*_m(\mathbb H), \alpha_s=\alpha_{n,s}\circ\on{Ad}S_m|_{\calN}\ \ (n=2m)$.
\end{enumerate}
(c) $\frakg=\frak{sp}_{2m}(\bC)$:
\begin{enumerate}
\item 
$\frakg_\bbR=\frak{sp}_{2m}(\bbR), \alpha_s=\alpha_{2m,s}|_\calN$.
\item
$\frakg_\bbR=\frak{sp}_{p,m-p}, \alpha_{s}=\alpha_{2m,s}\circ\on{Ad}K_{p,m-p}\circ\omega|_\calN$.
\end{enumerate}
}
\subsection{Proof of Theorem \ref{homeomorphism for g}}
The proof is similar to the one of Theorem \ref{homeomorphism for g_n}.
Since 
$\fg=(\fg_n)^\sigma$ is the fixed-points subspace of the involution 
$\sigma$ on $\fg_n$ and the stratum of the Lusztig stratification of $\fg_n$ are invariant under $\sigma$
(the stratum are invariant under the adjoint action and transpose), we obtain a 
stratification of 
$\fg$ given by the 
$\sigma$-fixed points of the strata. 
The stratification on $\fg$ induces a stratification on $\fg'=\fg\times_{\fc}\fc_{\fp,\bbR}$, moreover,
the intersection of each stratum with the fibers of 
$\fg'\to\fc_{\fp,\bbR}$, if non-empty, is a finite union of $G$-orbits.

Let $\lvert\lvert-\lvert\lvert_{\fg'}:\fg'\to\bbR_{\geq0}$ be the restriction of the function
$\lvert\lvert-\lvert\lvert_{\fg_n'}$ to $\fg'\subset\fg'_n$ in the proof of Theorem \ref{homeomorphism for g_n}.
 It follows from Theorem \ref{family of involutions for g} and the construction of the function 
 $\lvert\lvert-\lvert\lvert_{\fg_n'}$ that
 the real analytic map $\fg'\to\fc_{\fp,\bbR}$ together with the stratification of $\fg'$ described above and  
the function $\lvert\lvert-\lvert\lvert_{\fg'}$ satisfy the assumption in Lemma \ref{fixed points}, and hence we obtain a stratified 
$K_\bbR$-equivariant homeomorphism 
\beq\label{key iso}
\fg_\bbR'=(\fg')^{\alpha_0}\to\fp'=(\fg')^{\alpha_1}
\eeq
which 
are real analytic on each stratum
and compatible with the maps to $\fc_{\fp,\bbR}$.
Since each stratum in $\fg'_\bbR$ (resp. $\fp'$)is a finite union of $G_\bbR$-orbits (resp. $K$-orbits)
and $K_\bbR$-acts simply transitively on connected components 
of each orbits, it follows that 
the homeomorphism~\eqref{key iso} 
restricts to a real analytic isomorphism between individual $G_\bbR$-orbits and $K$-orbits.
The proof of Theorem \ref{homeomorphism for g} is complete.

\quash{

\section{Kronheimer's instanton flow}
\subsection{}
We gives a review of Kronheimer's work on instanton flows on nilpotent orbits \cite{K}.
For each Lie algebra homomorphism 
$\rho_u:\mathfrak{su}_2\to\fu$ with complexification
$\rho:\mathfrak{sl}_2\to\fg$, let 
$M(\rho_u)$ be the space of solutions 
$A(t)=(A_1(t),A_2(t),A_3(t))$ of the 

\subsection{}
\begin{thm}\label{intro: main 2}
For any nilpotent orbit $\mO$ stable under the conjugation $\eta$,
there is a continuous one-parameter family of $K_c$-equivariant diffeomorphism 
\begin{equation}\label{eq: intro family of invs}
\xymatrix{
\alpha_s:\calO\ar[r] &  \calO & s\in [0,1]
}
\end{equation}
satisfying the properties: 
\begin{enumerate}
\item $\alpha_s^2$ is the identity, for all $s\in [0,1]$. 
\item At $s=0$, we have $\alpha_0 = -\eta$.
\item At $s=1$, we have $\alpha_1 = - \theta$.
\item $\alpha_s$ is functorial with respect to strictly normal embedding
$\rho:\mathfrak{sl}_2(\bC)\to\fg$.
\end{enumerate}
\end{thm}

The last 
property means the following.
Let $\alpha_s$ be a strictly normal embedding
$\rho:\mathfrak{sl}_2(\bC)\to\fg$ and let
$\mO$ be the nilpotent orbit through $\rho(e)\in\fg$. The map $\rho$ restricts to a map  
$\mO_2\to\mO$ denoted again by $\rho$.
Then property (4) says that we have 
\[\alpha_s\circ\rho(v)=\rho\circ\alpha_s(v)\] for any $v\in\mO_2$ and $s\in[0,1]$.

\begin{example}
(1) Since every nilpotent orbit is stable under the compact conjugation $\delta$, 
the theorem above shows that there exits a family of $K_\bbR$-equivaraint involution
on any nilpotent orbit $\mO$ connecting $-\delta$ and $-\on{id}=-\theta$.
(2) Consider the conjugation $A\to \eta(A)=\bar A$ on
$\fg=\frak{sl}_n(\bbC)$ with
$\theta(A)=-A^T$. Nilpotent obits in $\frak{sl}_n(\bbC)$ are stable under the conjugation and the theorem above says that there exits a family of $K_\bbR$-equivaraint involution on $\frak{sl}_n(\bbC)$
connecting $-\eta(A)=-\bar{A}$ and $-\theta(A)=A^T$.
\end{example}

\subsection{Kostant-Sekiguchi homoemorphisms}

Is is known that the intersection 
$\mO\cap i\fg_\bbR$ and $\mO\cap \fp$ are disjoint unions of 
$G_\bbR$-orbits and $K$-orbits
\[\mO\cap i\fg_\bbR=\cup\mO_{\bbR,a},\ \ \ \mO\cap\fp=\cup\mO_{\fp,a}.\]

\begin{thm}
There is a $K_\bbR$-equivariant diffeomorphism 
$\mO\cap i\fg_\bbR\is\mO\cap\fp$ such that it 
 takes each $G_\bbR$-orbit to a $K$-orbit and the induced bijection 
 \[|G_\bbR\backslash\mO\cap i\fg_\bbR|\leftrightarrow|K\backslash\mO\cap\fp|,\ \ [\mO_{\bbR,a}]\leftrightarrow[\mO_{\fp,a}]\]
 is equal to the Kostant-Sekiguchi bijection.
\end{thm}

}

\section{Real and symmetric Springer theory}\label{applications}

\subsection{The real Grothendieck-Springer map}
Let $A_\bbR=\exp\fa_\bbR$ which is a 
closed, connected, abelian, diagonalizable subgroup of $G_\bbR$. 
Let $(\Phi,\fa_\bbR^*)$ be the root system (possible non-reduced)
of $(\fg_\bbR,\fa_\bbR)$. 
For each $\alpha\in\Phi$
we denote by $\fg_{\bbR,\alpha}\subset\fg_\bbR$ the corresponding $\alpha$-eigenspace. 
Choose a system of simple roots $\Delta=\{\alpha_1,...,\alpha_r\}\subset\Phi$ and denote 
by $\Phi^+$ (resp. $\Phi^-$) the corresponding set of positive roots (resp. negative roots).
We have the following decomposition:
\[\fg_\bbR=\frak m_\bbR\oplus\fa_\bbR\oplus\frak n_{\bbR}\oplus\bar{\frak n}_{\bbR}.\] 
where $\frak m_\bbR=Z_{\frak k_\bbR}(\fa_\bbR)$, $\frak n_\bbR=\oplus_{\alpha\in\Phi^+}\fg_{\bbR,\alpha}$, $\bar{\frak n}_\bbR=\oplus_{\alpha\in\Phi^-}\fg_{\bbR,\alpha}$. 

Let $\frak b_\bbR=\frak m_\bbR\oplus\fa_\bbR\oplus\frak n_\bbR$ be a minimal parabolic subalgebra of $\fg_\bbR$ and 
we denote by $B_\bbR=M_\bbR A_\bbR N_\bbR$ the corresponding minimal parabolic subgroup, here $N_\bbR=\exp(\frak n_\bbR)$ and
$M_\bbR=Z_{K_\bbR}(A_\bbR)$ is a group (possible not connected) with Lie algebra $\frak m_\bbR$. 
We write $F=\pi_0(M_\bbR)$.

An element $x\in \fg_\bbR$ is called semi-simple (resp. nilpotent)
if $\ad_x$ is diagonalizable over $\bbC$ (resp. nilpotent). An element  
$x\in \fg_\bbR$ is called hyperbolic (resp. elliptic) if 
it is semi-simple and 
the eigenvalues of 
$\ad_x$ are real (resp. purely imaginary).
For any $x\in \fg_\bbR$ we have the Jordan decomposition 
$x=x_e+x_h+x_n$ where $x_e$ is elliptic, $x_h$ is hyperbolic,
$x_n$ is nilpotent, and the three elements $x_e,x_h,x_n$ commute.

Consider the adjoint action of $G_\bbR$ on $\fg_\bbR$.
By a result of Richardson and Slodowy \cite{RS}, 
there exists a semi-algebraic set $\fg_\bbR//G_\bbR$ 
whose points are the 
semi-simple $G_\bbR$-orbits on $\fg_\bbR$. Furthermore,
there are maps $\chi_{\bbR}:\fg_\bbR\to \fg_\bbR//G_\bbR$ and
$\fg_\bbR//G_\bbR\to\fc$, such that 
the restriction of the 
Chevalley map $\chi:\fg\to\fc$ to $\fg_\bbR$ factors as 
\[\xymatrix{\fg_\bbR\ar[r]\ar[d]^{\chi_{\bbR}}&\fg\ar[d]^{\chi}\\
\fg_\bbR//G_\bbR\ar[r]&\fc}.\]
For any $x\in \fg_\bbR$ its image $\chi_{\bbR}(x)$ 
is given by the $G_\bbR$-orbit through the semi-simple part
$x_e+x_h$ of $x$.
We also have an embedding 
$\fa_\bbR//\rW\to \fg_\bbR//G_\bbR$, whose image consists of 
hyperbolic $G_\bbR$-orbits in $\fg_\bbR$,
such that the restriction of 
$\chi_{\bbR}$ to $\fa_\bbR$ factors as 
$\fa_\bbR\to \fa_\bbR//\rW\to \fg_\bbR//G_\bbR$.

Recall the subspace $\fg_\bbR'\subset \fg_\bbR$ 
consisting of elements in $\fg_\bbR$ with hyperbolic 
semi-simple parts (\ref{real eigenvalues}). 
By a result of Kostant \cite[Proposition 2.4]{Ko2}, any 
hyperbolic element $x$ in $\fg_\bbR$ is conjugate to an 
element in $\fa_\bbR$.
Moreover, the set of 
elements in $\fa_\bbR$ which are conjugate to $x$ is single 
$\rW$-orbit. It follows that the embedding $\fg_\bbR'\to \fg_\bbR$ factors through
an isomorphism
\beq
\fg_\bbR'=\fg_\bbR\times_{\fg_\bbR//G_\bbR}\fa_\bbR//\rW
\eeq
In particular, we have a natural projection map
\beq\label{real Chevalley}
\fg_\bbR'\to \fa_\bbR//\rW
\eeq
such that the composition 
$\fg_\bbR'\to \fa_\bbR//\rW\to\fc$
is equal to the map $\fg_\bbR'\to \fc_{\fp,\bbR}\subset\fc$
in~\eqref{projections}.

Introduce the \emph{real Grothendiek-Springer map}
\beq\label{GS map}
\widetilde\fg_\bbR=G_\bbR\times^{B_\bbR}\fb_\bbR\to \fg_\bbR\ \ \ \ (g,v)\to \on{Ad}_g(v).
\eeq
Note that unlike the complex case, the 
real Grothendiek-Springer map~\eqref{GS map} in general is not surjective.
Consider the base change of the real Grothendiek-Springer map to $\fg_\bbR'$:
\beq\label{hyperbolic GS}
\widetilde\fg_{\bbR}'\to \fg_\bbR'
\eeq
where $\widetilde\fg_{\bbR}'=\widetilde\fg_{\bbR}\times_{\fg_\bbR}\fg_\bbR'$.
By \cite[Proposition 2.5]{Ko2}, 
an element $x\in \fg_\bbR$ is in $\fg_\bbR'$ if and only if 
it is conjugate to an element in 
$\fa_\bbR+\frak n_\bbR$\footnote{In \emph{loc. cit.}, the claim is proved in the 
setting of adjoint action of $G_\bbR$ on $G_\bbR$. But the same argument works for the case of adjoint action of $G_\bbR$ on the Lie algebra 
$\fg_\bbR$, and hence $\fg_\bbR$.}. 
It follows that 
\[\widetilde\fg_{\bbR}'=G_\bbR\times^{B_\bbR}(\fa_\bbR+\frak n_\bbR)\]
and the map~\eqref{hyperbolic GS} is surjective. 
Moreover we have 
the following commutative diagram
\beq
\xymatrix{\widetilde\fg_{\bbR}'\ar[r]\ar[d]&\fg_\bbR'\ar[d]\\
\fa_\bbR\ar[r]&\fa_\bbR//\rW}
\eeq
where the map 
$\widetilde\fg_{\bbR}'\to \fa_\bbR$ is given by 
$(g,v=v_{a}+v_n)\to v_a$.

Consider the \emph{real Springer map}
\beq\label{RS map}
\pi_{\bbR}:\widetilde\calN_{\bbR}=G_\bbR\times^{B_\bbR}\frak n_\bbR\to \calN_\bbR
\eeq
We have the following cartesian diagrams
\beq
\xymatrix{\widetilde{\calN}_{\bbR}\ar[d]\ar[r]&\widetilde \fg_{\bbR}'\ar[d]\ar[r]&\widetilde\fg_{\bbR}\ar[d]\\
\calN_\bbR\ar[r]&\fg_\bbR'\ar[r]&\fg_\bbR}
\eeq
Since \eqref{hyperbolic GS} is surjective,
the real Springer map~\eqref{RS map} is also surjective.

\begin{lemma}\label{trivialization of real SR}
We have a $K_\bbR$-equivariant isomorphism 
$\widetilde\fg_{\bbR}'\is\widetilde\calN_{\bbR}\times \fa_\bbR$
commutes with projections to $\fa_\bbR$.
\end{lemma}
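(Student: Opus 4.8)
The plan is to reduce everything to an elementary fact about spaces induced from the \emph{compact} group $M_\bbR$, by using the Iwasawa decomposition to trade the minimal parabolic $B_\bbR$ for $M_\bbR$. Recall from the discussion preceding the lemma that $\widetilde\fg_{\bbR}'=G_\bbR\times^{B_\bbR}(\fa_\bbR+\frak n_\bbR)$ and $\widetilde\calN_\bbR=G_\bbR\times^{B_\bbR}\frak n_\bbR$, that both $\fa_\bbR+\frak n_\bbR$ and $\frak n_\bbR$ are $\on{Ad}(B_\bbR)$-stable subspaces of $\frak b_\bbR=\frak m_\bbR\oplus\fa_\bbR\oplus\frak n_\bbR$, and that the map $\widetilde\fg_{\bbR}'\to\fa_\bbR$ sends $[g,v]$ to the $\fa_\bbR$-component $v_a$ of $v$; this is well defined since $\on{Ad}(M_\bbR A_\bbR)$ fixes $\fa_\bbR$ pointwise while $\on{Ad}(N_\bbR)$ alters $v$ only by elements of $\frak n_\bbR$.

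First I would invoke the Iwasawa decomposition $G_\bbR=K_\bbR A_\bbR N_\bbR$, which gives $G_\bbR=K_\bbR B_\bbR$ and, by uniqueness of the Iwasawa factors, $K_\bbR\cap B_\bbR=M_\bbR$. Consequently, for any $\on{Ad}(B_\bbR)$-stable $X\subseteq\frak b_\bbR$ the tautological map $K_\bbR\times^{M_\bbR}X\to G_\bbR\times^{B_\bbR}X$, $[k,x]\mapsto[k,x]$, is a $K_\bbR$-equivariant isomorphism (surjectivity from $G_\bbR=K_\bbR B_\bbR$ together with $\on{Ad}(B_\bbR)$-stability of $X$; injectivity from $K_\bbR\cap B_\bbR=M_\bbR$). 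Applying this to $X=\fa_\bbR+\frak n_\bbR$ and to $X=\frak n_\bbR$ identifies $\widetilde\fg_{\bbR}'\is K_\bbR\times^{M_\bbR}(\fa_\bbR+\frak n_\bbR)$ and $\widetilde\calN_\bbR\is K_\bbR\times^{M_\bbR}\frak n_\bbR$, compatibly with the projections to $\fa_\bbR$ (on the left this now reads $[k,a+n]\mapsto a$) and to $G_\bbR/B_\bbR=K_\bbR/M_\bbR$.

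Now comes the only real point. Since $M_\bbR=Z_{K_\bbR}(A_\bbR)$ acts trivially on $\fa_\bbR$ and preserves $\frak n_\bbR$, the splitting $\fa_\bbR+\frak n_\bbR=\fa_\bbR\oplus\frak n_\bbR$ is an isomorphism of $M_\bbR$-spaces in which the first summand carries the trivial action. Hence
\[
\Phi\colon K_\bbR\times^{M_\bbR}(\fa_\bbR\oplus\frak n_\bbR)\lra \big(K_\bbR\times^{M_\bbR}\frak n_\bbR\big)\times\fa_\bbR,\qquad [k,(a,n)]\longmapsto \big([k,n],a\big),
\]
is well defined, because for $m\in M_\bbR$ one has $[km,(a,\on{Ad}_{m^{-1}}n)]\mapsto([km,\on{Ad}_{m^{-1}}n],a)=([k,n],a)$ using that $\on{Ad}_m$ fixes $a$; it has inverse $([k,n],a)\mapsto[k,(a,n)]$, is manifestly an isomorphism of real-analytic manifolds, is $K_\bbR$-equivariant for left translation on the $K_\bbR$-factor, and commutes with the projections to $\fa_\bbR$. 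Composing $\Phi$ with the identifications of the previous paragraph produces the asserted $K_\bbR$-equivariant isomorphism $\widetilde\fg_{\bbR}'\is\widetilde\calN_\bbR\times\fa_\bbR$ over $\fa_\bbR$.

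I do not anticipate a genuine obstacle. The one thing worth flagging is that one cannot split $\fa_\bbR+\frak n_\bbR\to\fa_\bbR$ in a $B_\bbR$-equivariant way on the nose — equivalently, $\widetilde\fg_{\bbR}'\to G_\bbR/B_\bbR\times\fa_\bbR$ has no $B_\bbR$-natural section — because $\on{Ad}(N_\bbR)$ does not fix $\fa_\bbR$ pointwise; passing to the compact subgroup $M_\bbR$ via Iwasawa is exactly what removes this difficulty.
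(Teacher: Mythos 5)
Your proof is correct and follows the same route as the paper: use the Iwasawa decomposition to replace the induction from $B_\bbR$ by induction from $M_\bbR$ (so $\widetilde\fg_\bbR' \is K_\bbR\times^{M_\bbR}(\fa_\bbR+\frak n_\bbR)$ and $\widetilde\calN_\bbR\is K_\bbR\times^{M_\bbR}\frak n_\bbR$), then peel off the $\fa_\bbR$ factor because $M_\bbR$ acts on it trivially. You have simply spelled out the intermediate verifications (well-definedness, equivariance, compatibility with the projection) more explicitly than the paper does.
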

\begin{proof}
The Iwasawa decomposition
$G_\bbR= K_\bbR A_\bbR N_\bbR$ gives rise to $K_\bbR$-equivariant isomorphism
\[\widetilde\fg_{\bbR}'=G_\bbR\times^{B_\bbR}(\fa_\bbR+\frak n_\bbR)\is
K_\bbR\times^{M_\bbR}(\fa_\bbR+\frak n_\bbR).\]
Since $M_\bbR$ acts trivially on $\fa_\bbR$, we obtain 
\[\widetilde\fg_{\bbR}'\is (K_\bbR\times^{M_\bbR}\frak n_\bbR)\times \fa_\bbR.\]
On the other hand, we have 
\[\widetilde\calN_{\bbR}=G_\bbR\times^{B_\bbR}\frak n_\bbR\is K_\bbR\times^{M_\bbR}\frak n_\bbR.\]
Combining the isomorphisms above we get the desired 
$K_\bbR$-equivariant trivialization
\[\widetilde\fg_{\bbR}'\is\widetilde\calN_{\bbR}\times \fa_\bbR\]
commutes with projections to $\fa_\bbR$.
The proof is complete.
\end{proof}
\subsection{Sheaves of real nearby cycles}\label{Real NC}
Fix a point $a_{\bbR}\in \fa^{\on{rs}}_\bbR$ with image
$\xi_{\bbR}\in \fa_\bbR//\rW$.
Let $\mO_{\xi_{\bbR}}$ be the semi-simple $G_\bbR$-orbit through $a_\bbR$.
The centralizer  
$Z_{G_\bbR}(a_{\bbR})$ is isomorphic to $M_\bbR A_\bbR$
and it follows that the $G_\bbR$-equivariant fundamental group of 
$\mO_{\xi_{\bbR}}$
is isomorphic to $\pi_0(M_\bbR A_\bbR)\is\pi_0(M_\bbR)=F$.
For any one dimensional character $\chi$ of $F$ we denote by
$\mL_{\bbR,\chi}$ the $G_\bbR$-equivariant local system on 
$\mO_{\xi_{\bbR}}$ corresponding to 
$\chi$.

Consider the path 
$\gamma_{\bbR}:[0,1]\to \fa_\bbR//\rW$ given by
$\gamma_\bbR(s)=s\xi_{\bbR}$ and 
denote by
\[\calZ_{\bbR}=\fg_{\bbR}'\times_{\fa_\bbR//\rW}[0,1]\]
the base change of $\fg_\bbR'\to \fa_\bbR//\rW$~\eqref{real Chevalley} along 
$\gamma_{\bbR}$.
Note that $\gamma_{\bbR}$ is an embedding and hence 
$\calZ_{\bbR}$ is closed subvariety of $\fg_\bbR'$.
The fibers of the natural projection $f:\calZ_{\bbR}\to[0,1]$
over $0$ and $1$ are isomorphic to the nilpotent cone  $\calN_{\bbR}$ in $\fg_{\bbR}$ and 
semi-simiple orbit $\mO_{\xi_{\bbR}}$ respectively.
Moreover the $\bbR_{>0}$-action on $\fg_\bbR'$ induces a trivialization
\beq\label{Z_iR}
\mO_{\xi_{\bbR}}\times(0,1]\is\calZ_{\bbR}|_{(0,1]}\ \ \ (g,s)\to (sg,s). 
\eeq
Consider the following diagram 
\beq
\xymatrix{\mO_{\xi_{\bbR}}\times(0,1]\is\calZ_{\bbR}|_{(0,1]}\ar[r]^{\ \ \ \ \ \ \ \ \ u}\ar[d]&\calZ_{\bbR}\ar[d]^f&\calN_{\bbR}\ar[l]_{v}\ar[d]\\
(0,1]\ar[r]&[0,1]&\{0\}\ar[l]}
\eeq
where $u$ and $v$ are the natural embeddings.
Note that all the varieties in the diagram above carry natural $G_\bbR$-actions and all the maps 
between them are $G_\bbR$-equivariant.
Define the nearby cycles functor:
\beq
\Psi_{\bbR}:D_{G_\bbR}(\mO_{\xi_{\bbR}})\to D_{G_\bbR}(\calN_\bbR)\ \ \ \ \Psi_{\bbR}(\mF)=\psi_f(\mF\boxtimes\bC_{(0,1]})=v^*u_*(\mF\boxtimes\bC_{(0,1]}).
\eeq
For any character $\chi$ of $F$, consider the sheaf of nearby cycles with coefficient $\mL_\chi$
\beq
\mF_{\bbR,\chi}=\Psi_{\bbR}(\mL_\chi)
\eeq 
We will call $\Psi_{\bbR}$ the \emph{real nearby cycles functor} and  
$\mF_{\bbR,\chi}$ the \emph{sheaf of 
real nearby cycles}.

We shall give a formula of the nearby cycles sheaves in terms of the real Springer map
$\pi_{\bbR}:\widetilde\calN_{\bbR}\to \calN_\bbR$~\eqref{RS map}.
Since the $G_\bbR$-equivariant fundamental group of $G_\bbR/B_\bbR$, and hence that 
of $\widetilde\calN_{\bbR}$, 
is isomorphic to $\pi_0(B_\bbR)=\pi_0(M_\bbR)=F$, any 
character $\chi$ of $F$ gives rise to a 
$G_\bbR$-equivariant local system $\widetilde\mL_\chi$ on $\widetilde\calN_{\bbR}$.
Introduce the \emph{real Springer sheaf} 
\beq
\calS_{\bbR,\chi}=(\pi_{\bbR})_!\widetilde\mL_\chi.
\eeq

\begin{thm}\label{formula}
We have $\mF_{\bbR,\chi}\is\calS_{\bbR,\chi}$.

\end{thm}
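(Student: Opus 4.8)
The plan is to identify the nearby cycles sheaf $\mathcal F_{\bbR,\chi} = \Psi_{\bbR}(\mathcal L_\chi)$ with the real Springer sheaf by lifting the nearby-cycles construction through the real Grothendieck--Springer map and using the $\bbR_{>0}$-contraction on the family $\calZ_{\bbR}$. The key geometric inputs are Lemma \ref{trivialization of real SR}, which gives a $K_\bbR$-equivariant trivialization $\widetilde\fg_{\bbR}' \cong \widetilde\calN_{\bbR} \times \fa_\bbR$ over $\fa_\bbR$, and the fact that the real Springer map is surjective, semismall, and proper (the properness being what allows $(\pi_\bbR)_! = (\pi_\bbR)_*$ and commutation with nearby cycles). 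First I would base-change the family $f: \calZ_{\bbR} \to [0,1]$ along the real Grothendieck--Springer map to get $\widetilde\calZ_{\bbR} = \widetilde\fg_{\bbR}' \times_{\fg_{\bbR}'} \calZ_{\bbR} \to [0,1]$, whose fiber over $0$ is $\widetilde\calN_{\bbR}$ and whose fiber over $s \in (0,1]$ is (via the trivialization \eqref{Z_iR}) identified with the preimage in $\widetilde\fg_{\bbR}'$ of the semisimple orbit $\mO_{\xi_{\bbR}}$ — which, since $a_\bbR \in \fa_\bbR^{\on{rs}}$ is regular, is just a copy of $\mO_{\xi_\bbR}$ itself (the Springer map is an isomorphism over the regular semisimple locus). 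Thus $\widetilde{\pi}_{\bbR}: \widetilde\calZ_{\bbR} \to \calZ_{\bbR}$ restricts to an isomorphism away from $0$.

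Next I would apply proper base change / compatibility of $(\pi_\bbR)_!$ with the nearby cycles functor: since $\pi_{\bbR}$ (and its relative version over $[0,1]$) is proper, we have $\Psi_{\bbR} \circ (\pi_{\bbR})_! \cong (\pi_{\bbR})_! \circ \widetilde\Psi_{\bbR}$, where $\widetilde\Psi_{\bbR}$ is the nearby cycles functor for the family $\widetilde\calZ_{\bbR} \to [0,1]$. Now the local system $\mathcal L_\chi$ on $\mO_{\xi_{\bbR}}$ pulls back, under the isomorphism over $(0,1]$, to $\widetilde{\mathcal L}_\chi|$ restricted to the generic fiber, because the identification of $F \cong \pi_0(M_\bbR) \cong \pi_0(B_\bbR)$ with both the $G_\bbR$-equivariant fundamental group of $\mO_{\xi_\bbR}$ and that of $\widetilde\calN_{\bbR}$ is compatible (this uses that $Z_{G_\bbR}(a_\bbR) \cong M_\bbR A_\bbR$ has the same component group as $B_\bbR$). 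So it suffices to compute $\widetilde\Psi_{\bbR}(\widetilde{\mathcal L}_\chi)$ and show it is $\widetilde{\mathcal L}_\chi$ on $\widetilde\calN_{\bbR}$ (the full local system, extended across the special fiber). This is where Lemma \ref{trivialization of real SR} does the work: the trivialization $\widetilde\fg_{\bbR}' \cong \widetilde\calN_{\bbR} \times \fa_\bbR$ is $\bbR_{>0}$-equivariant in the appropriate sense and carries the family $\widetilde\calZ_{\bbR} \to [0,1]$ to a \emph{product} family $\widetilde\calN_{\bbR} \times [0,1] \to [0,1]$ (after pulling back along $\gamma_\bbR$), on which nearby cycles of a constant-along-$[0,1]$ local system is simply that local system. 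Hence $\widetilde\Psi_{\bbR}(\widetilde{\mathcal L}_\chi) \cong \widetilde{\mathcal L}_\chi$, and applying $(\pi_{\bbR})_!$ gives $\mathcal F_{\bbR,\chi} \cong (\pi_{\bbR})_! \widetilde{\mathcal L}_\chi = \mathcal S_{\bbR,\chi}$.

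The main obstacle I anticipate is verifying carefully that the trivialization of Lemma \ref{trivialization of real SR} is genuinely compatible with the $\bbR_{>0}$-action used to define the path $\gamma_\bbR$ and the family $\calZ_{\bbR}$ — i.e., that the Iwasawa-decomposition trivialization $\widetilde\fg_{\bbR}' \cong (K_\bbR \times^{M_\bbR} \frak n_\bbR) \times \fa_\bbR$ intertwines scaling on $\fg_\bbR'$ with the scaling on the $\fa_\bbR$-factor, so that the pulled-back family over $[0,1] \hookrightarrow \fa_\bbR//\rW$ is honestly trivial and not merely trivial fiberwise. One must also check that the local system $\widetilde{\mathcal L}_\chi$ is constant in the $[0,1]$-direction, which amounts to the monodromy of $\widetilde{\mathcal L}_\chi$ being trivial along the contracting $\bbR_{>0}$-orbits — this follows since $\bbR_{>0}$ is simply connected and acts compatibly, but it should be stated precisely. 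A secondary technical point is justifying the commutation $\Psi_{\bbR} \circ (\pi_\bbR)_! \cong (\pi_\bbR)_! \circ \widetilde\Psi_{\bbR}$ in the semialgebraic/real-analytic setting of $\fg_\bbR'$ (rather than the complex algebraic one); this is standard for proper maps but the reference should account for the fact that $\fa_\bbR//\rW$ and $\fg_\bbR'$ are only semialgebraic, using e.g. a real-analytic stratification compatible with the Lusztig strata.
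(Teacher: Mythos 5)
The proposal follows the same high-level strategy as the paper—lift the nearby-cycles family through the real Grothendieck–Springer map, apply proper base change, and use Lemma~\ref{trivialization of real SR} to trivialize the lifted family—but there is a genuine error in the construction of the lifted family $\widetilde\calZ_{\bbR}$.

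You define $\widetilde\calZ_{\bbR} = \widetilde\fg_{\bbR}' \times_{\fg_{\bbR}'} \calZ_{\bbR}$, i.e.\ you base-change $\calZ_{\bbR}$ along the real Grothendieck–Springer map, and then assert that over $s\in(0,1]$ this produces a single copy of $\mO_{\xi_\bbR}$ because ``the Springer map is an isomorphism over the regular semisimple locus.'' This last claim is false: the Grothendieck–Springer map over a \emph{regular semisimple hyperbolic} element has fiber equal to a $\rW$-torsor (the $\rW$-conjugates of the minimal parabolic $\fb_\bbR$ through a fixed regular $a_\bbR\in\fa_\bbR$). What \emph{is} an isomorphism over the regular locus is the map $\widetilde\fg_\bbR'\to \fg_\bbR'\times_{\fa_\bbR//\rW}\fa_\bbR$, i.e.\ the simultaneous-resolution map, which still carries the extra $\fa_\bbR$-factor. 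Dropping that factor, as your fiber product over $\fg_\bbR'$ does, recovers a $|\rW|$-sheeted cover over $(0,1]$, not an isomorphism. Concretely, your $\widetilde\calZ_{\bbR}$ equals $\widetilde\fg_{\bbR}'\times_{\fa_\bbR//\rW}[0,1]$, whose fiber over $s>0$ is $\bigsqcup_{w\in\rW}\mO_{\xi_\bbR}$.

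As a result, when you apply proper base change for nearby cycles with respect to $\tau_\bbR:\widetilde\calZ_{\bbR}\to\calZ_{\bbR}$ and the pullback of $\mL_\chi$ to the generic fiber, the left-hand side produces $(\tau_\bbR)_!$ of that pullback, which is $\mL_\chi^{\oplus|\rW|}\boxtimes\bC_{(0,1]}$, not $\mL_\chi\boxtimes\bC_{(0,1]}$; the argument would only prove $\mF_{\bbR,\chi}^{\oplus|\rW|}\cong\calS_{\bbR,\chi}^{\oplus|\rW|}$ (or worse, nothing clean at all if one is careless about which sheaf lives on which sheet). The correct fix—and this is precisely what the paper does—is to define $\widetilde\calZ_{\bbR}$ as the base change of $\widetilde\fg_{\bbR}'\to\fa_\bbR$ along the \emph{lifted} path $\tilde\gamma_\bbR:[0,1]\to\fa_\bbR$, $\tilde\gamma_\bbR(s)=sa_\bbR$, rather than along $\gamma_\bbR:[0,1]\to\fa_\bbR//\rW$. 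This selects a single sheet of the $\rW$-cover, so that $\tau_\bbR$ really is an isomorphism over $(0,1]$ and $(\tau_\bbR)_!(\mL_\chi\boxtimes\bC_{(0,1]})\cong\mL_\chi\boxtimes\bC_{(0,1]}$, after which proper base change and the Iwasawa trivialization of Lemma~\ref{trivialization of real SR} give the result exactly as you outline. Your remaining concerns (compatibility of the trivialization with the $\bbR_{>0}$-scaling, the identification of equivariant fundamental groups via $F=\pi_0(M_\bbR)$, and working in the semialgebraic setting) are all legitimate checks and are implicitly handled in the paper, but they do not by themselves fix the multiplicity issue.
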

\begin{proof}
Consider the path $\tilde\gamma_{\bbR}:[0,1]\to \fa_\bbR$
given by $\tilde\gamma_{\bbR}(s)=s(a_\bbR)$
and 
let \[\widetilde\calZ_{\bbR}=\tilde\fg_{\bbR}\times_{\fa_\bbR}[0,1]\]
to be the base change of the map
$\tilde\fg_{\bbR}\to \fa_\bbR$
along the path $\tilde\gamma_{\bbR}$.
The fiber of the projection $\tilde f:\widetilde\calZ_{\bbR}\to [0,1]$
over $0$ and $1$ are given by
$\widetilde\calN_{\bbR}$ and $\mO_{\xi_{\bbR}}$ respectively. 
Moreover,
there is a trivialization
\beq\label{tilde Z_iR}
\widetilde\calZ_{\bbR}|_{(0,1]}\is\mO_{\xi_{\bbR}}\times(0,1] \ \ \ ((g,v),s)\to (\on{Ad}_{g}(s^{-1}v),s)
\eeq
It follows that 
the real Grothendieck-Springer map $\widetilde\fg_{\bbR}\to \fg_\bbR$
restricts to a map $\tau_{\bbR}:\widetilde\calZ_{\bbR}\to\calZ_{\bbR}$
which is an isomorphism over $\calZ_{\bbR}|_{(0,1]}$. Consider he following commutative diagram
\beq
\xymatrix{\mO_{\xi_{\bbR}}\times(0,1]\ar[r]^{~\eqref{tilde Z_iR}}\ar[d]^{\id}&\widetilde\calZ_{\bbR}|_{(0,1]}\ar[r]^{\ \ \ \ \tilde u}\ar[d]^{\tau_{\bbR}}&\widetilde\calZ_{\bbR}\ar[d]^{\tau_{\bbR}}&\widetilde\calN_{\bbR}\ar[l]_{\tilde v}\ar[d]^{\pi_{\bbR}}\\
\mO_{\xi_{\bbR}}\times(0,1]\ar[r]^{~\eqref{Z_iR}}\ar[dr]&\calZ_{\bbR}|_{(0,1]}\ar[r]^{\ \ \ \ u}\ar[d]&\calZ_{\bbR}\ar[d]&\calN_{\bbR}\ar[l]_{v}\ar[d]\\
&(0,1]\ar[r]&[0,1]&\{0\}\ar[l]}
\eeq
Consider the nearby cycles functor 
\[\widetilde\Psi_{\bbR}:D_{G_\bbR}(\mO_{\xi_{\bbR}})\to D_{G_\bbR}(\widetilde\calN_{\bbR})\ \ 
\ \ \ \ \widetilde\Psi_{\bbR}(\mF)=\tilde v^*\tilde u_*(\mF\boxtimes\bC_{(0,1]})\]
Since $\tau_{\bbR}$ is proper and $(\tau_{\bbR})_!(\mF\boxtimes\bC_{(0,1]})\is\mF\boxtimes\bC_{(0,1]}$, the proper base change for nearby cycles functors implies that 
there is a canonical isomorphism
\beq\label{base change}
(\pi_{\bbR})_!\widetilde\Psi_{\bbR}(\mF)=
(\pi_{\bbR})_!\psi_{\tilde f}(\mF\boxtimes\bC_{(0,1]})\is\psi_f((\tau_{\bbR})_!(\mF\boxtimes\bC_{(0,1]}))\is\psi_f(\mF\boxtimes\bC_{(0,1]})=\Psi_{\bbR}(\mF).
\eeq
On the other hand, the $K_\bbR$-equivariant trivialization  
in Lemma \ref{trivialization of real SR} gives rise to a $K_\bbR$-equivariant isomorphism 
\beq\label{trivialization}
\widetilde\calZ_{\bbR}\is\widetilde\calN_{\bbR}\times[0,1]
\eeq
commutes with projections to $[0,1]$. In addition, 
there exits a $K_\bbR$-equivariant isomorphism 
$q:\widetilde\calN_{\bbR}\is\mO_{\xi_{\bbR}}$ such that 
$q^*\mL_\chi\is\widetilde\mL_\chi$ and making the following diagram commute
\[\xymatrix{\calZ_{\bbR}|_{(0,1]}\ar[r]^{~\eqref{trivialization}\ \ \ \ }\ar[d]^\id&\widetilde\calN_{\bbR}\times(0,1]\ar[d]^{q\times\id}\\
\widetilde\calZ_{\bbR}|_{(0,1]}\ar[r]^{\eqref{tilde Z_iR}\ \ \ \ }&\mO_{\xi_{\bbR}}\times(0,1]}.\]
It follows that 
\beq\label{L=L}
\widetilde\Psi_{\bbR}(\mL_{\chi})\is\psi_{\tilde f}(\mL_{\chi}\boxtimes\bC_{(0,1]})\is
\psi_{\tilde f}(\widetilde\mL_{\chi}\boxtimes\bC_{(0,1]})\is\widetilde\mL_\chi
\eeq
as object in $D_{K_\bbR}(\widetilde\calN_{\bbR})$.
Since $D_{G_\bbR}(\widetilde\calN_{\bbR})\subset D_{K_\bbR}(\widetilde\calN_{\bbR})$
is a full subcategory (as $G_{\bbR}/K_{\bbR}$ is contractible), 
we conclude that 
\[\calS_{\bbR,\chi}=(\pi_{\bbR})_!\widetilde\mL_\chi\stackrel{~\eqref{L=L}}\is(\pi_{\bbR})_!\widetilde\Psi_{\bbR}(\mL_\chi)\stackrel{~\eqref{base change}}\is\Psi_{\bbR}(\mL_\chi)=\mF_{\bbR,\chi}\in D_{G_\bbR}(\calN_{\bbR})\]
The proof is complete.
\end{proof}

\subsection{Sheaves of symmetric nearby cycles}\label{symmetric NC}

The discussion in the previous subsection has a counterpart in the setting of symmetric space.
Recall the subspace $\fp'\subset\fp$ consisting of elements 
$x$ in $\fp$ such that the eigenvalues of $\ad_x$ are real.
In \cite{KR}, Kostant and Rallis proved that for any such $x$, its semi-simple part 
$x_s\in\fp$ 
is conjugate to an element in 
$\fa_\bbR$, moreover, the set of elements in $\fa_\bbR$ which are conjugate to $x_s$ is single 
$\rW$-orbit. It follows that the subspace  
$\fp'$ is equal to the base change 
\[\fp'=\fp\times_{\fc_\fp}\fa_\bbR//\rW\]
of $\chi_\fp:\fp\to\fc_\fp$ along $\fa_\bbR//\rW\subset\fc_\fp$.

Let $a_\fp\in \fa_\bbR^{\on{rs}}$ with image $\xi_{\fp}\in \fa_\bbR//\rW$.
Let $\mO_{\xi_\fp}$ be the $K$-orbit through $a_\fp$.
We have $Z_K(a_\fp)=MA$ and it follows that the $K$-equivariant fundamental group 
of $\mO_{\xi_\fp}$ is isomorphic to
$\pi_0(Z_K(a_\fp))=\pi_0(MA)=\pi_0(M)=F$.
For any character $\chi$ of $F$ we denote by
$\mL_{\fp,\chi}$ the $K$-equivariant local system on $\mO_{\xi_\fp}$.
Consider the path 
$\gamma_\fp:[0,1]\to \fa_\bbR//\rW$ given by
$\gamma_\fp(s)=s\xi_{\fp}$ and define 
\[\calZ_{\fp}=\fp'\times_{\fa_\bbR//\rW}[0,1]\] 
The fibers of the natural projection $f_\fp:\calZ_{\fp}\to[0,1]$
over $0$ and $1$ are isomorphic to the nilpotent cone  $\calN_{\fp}$ in $\fp$ and 
the $K$-orbit $\mO_{\fp}$.
Moreover the $\bbR_{>0}$-action on $\fp'$ induces a trivialization
\beq\label{Z_p}
\mO_{\xi_\fp}\times(0,1]\is\calZ_{\fp}|_{(0,1]}\ \ \ (g,s)\to (sg,s). 
\eeq
Consider the following diagram 
\beq
\xymatrix{\mO_{\xi_{\fp}}\times(0,1]\is\calZ_{\fp}|_{(0,1]}\ar[r]^{\ \ \ \ \ \ \ \ \ u}\ar[d]&\calZ_{\fp}\ar[d]^{f_\fp}&\calN_{\fp}\ar[l]_{v}\ar[d]\\
(0,1]\ar[r]&[0,1]&\{0\}\ar[l]}
\eeq
where $u$ and $v$ are the natural embeddings.
Note that all the varieties in the diagram above carry natural $K$-actions and all the maps 
between them are $K$-equivariant.
Introduce the nearby cycles functor:
\beq
\Psi_{\fp}:D_{K}(\mO_{\xi_\fp})\to D_{K}(\calN_\fp)\ \ \ \ \Psi_{\fp}(\mF)=\psi_{f_\fp}(\mF\boxtimes\bC_{(0,1]})=v^*u_*(\mF\boxtimes\bC_{(0,1]}).
\eeq
For any character $\chi$ of $F$, consider the nearby cycles sheaf with coefficient $\mL_{\fp,\chi}$
\beq
\mF_{\fp,\chi}=\Psi_{\fp}(\mL_{\fp,\chi})
\eeq 
We will call $\Psi_{\fp}$ the \emph{symmetric nearby cycles functor} and 
$\mF_{\fp,\chi}$ the
\emph{sheaf of symmetric nearby cycles}.

Recall the $K_\bbR$-equivariant stratified homeomorphism 
\beq\label{key homeo}
\fg'_\bbR\is\fp'
\eeq in Theorem~\eqref{homeomorphism for g}. 
Since 
the homeomorphism 
~\eqref{key homeo} 
commutes with projection to 
$\fc_{\fp,\bbR}$ and the natural map 
$\fa_\bbR//\rW\to\fc_{\fp,\bbR}$ is a finite map\footnote{Recall that 
$\fc_{\fp,\bbR}$ is by definition 
the image of the map
$\fa_\bbR\to\fa//\rW=\fc_\fp\to\fc$.
Since the later map
$\fc_\fp\to\fc$ is in general not a closed embedding, 
the map $\fa_\bbR//\rW\to\fc_{\fp,\bbR}$ is not a closed embedding in general.
}, 
for any $\xi_{\bbR}\in \fa_\bbR^{\on{rs}}/\rW$
there exists a unique $\xi_\fp\in \fa^{\on{rs}}_\bbR//\rW$ such that 
~\eqref{key homeo}
restricts to a $K_\bbR$-equivariant real analytic isomorphism between individual 
fibers 
\[\mO_{\xi_{\bbR}}\is\mO_{\xi_\fp}.\]
Since~\eqref{key homeo} is $\bbR_{>0}$-equivariant, 
the isomorphism above and
the trivializations~\eqref{Z_iR} and~\eqref{Z_p}
imply that~\eqref{key homeo}
induces a $K_\bbR\times\bbR_{>0}$-equivariant 
homeomorphism
\beq
\calZ_{\bbR}\is\calZ_\fp
\eeq
commutes with projections to $[0,1]$.
The homeomorphism above gives rise to a canonical commutative square of
functors
\beq\label{square}
\xymatrix{D_{G_\bbR}(\mO_{\xi_{\bbR}})\ar[r]^{\Psi_{\bbR}}\ar[d]&D_{G_\bbR}(\calN_\bbR)\ar[d]\\
D_K(\mO_{\xi_\fp})\ar[r]^{\Psi_\fp}&D_K(\calN_\fp)}
\eeq
where the upper and lower arrows are the real and symmetric nearby cycles respectively and 
the vertical arrows are the 
equivalences in~\eqref{eq: intro equiv}.
Since the equivalence $D_{G_\bbR}(\mO_{\xi_{\bbR}})\is D_{K}(\mO_{\xi_{\fp}})$
maps $\mL_{\bbR,\chi}$ to $\mL_{\fp,\chi}$, 
the diagram~\eqref{square} and Theorem \ref{formula}
imply the following:

\begin{thm}\label{S=R}
Assume $\fg$ is of classical type.
Under the equivalence $D_K(\calN_\fp)\is D_{G_\bbR}(\calN_{\bbR})$ in~\eqref{eq: intro equiv},
the sheaf of symmetric nearby cycles $\mF_{\fp,\chi}$ becomes the
the sheaf of real nearby cycles $\mF_{\bbR,\chi}$, which is also isomorphic to the 
real Springer sheaf $\mS_{\bbR,\chi}$.
In particular,
the real Springer map 
$\pi_\bbR:\widetilde\calN_\bbR\to\calN_\bbR$ is a semi-small map and the 
real Springer sheaf $\calS_{\bbR,\chi}$
is a perverse sheaf.

\end{thm}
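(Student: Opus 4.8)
The plan is to deduce the sheaf-theoretic identification from the machinery already assembled in Section~\ref{symmetric NC}, and then to extract the geometric consequences from standard perversity properties of nearby cycles; almost nothing new needs to be built. The $K_\bbR\times\bbR_{>0}$-equivariant homeomorphism $\calZ_\bbR\is\calZ_\fp$ over $[0,1]$ --- obtained from Theorem~\ref{homeomorphism for g} together with the finiteness of the map $\fa_\bbR//\rW\to\fc_{\fp,\bbR}$ --- already yields the commutative square~\eqref{square} intertwining the real and symmetric nearby-cycles functors $\Psi_\bbR$ and $\Psi_\fp$ with the equivalences on the regular semisimple orbits and on the nilpotent cones. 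So the first step is to invoke~\eqref{square}, together with the fact that the induced equivalence $D_{G_\bbR}(\mO_{\xi_\bbR})\is D_K(\mO_{\xi_\fp})$ carries $\mL_{\bbR,\chi}$ to $\mL_{\fp,\chi}$: both are the rank-one system attached to the character $\chi$ of $F$ under the identifications of the $G_\bbR$- and $K$-equivariant fundamental groups of $\mO_{\xi_\bbR}$ and $\mO_{\xi_\fp}$ with $F=\pi_0(M_\bbR)=\pi_0(M)$. This shows that, under the equivalence~\eqref{eq: intro equiv}, $\mF_{\fp,\chi}=\Psi_\fp(\mL_{\fp,\chi})$ corresponds to $\Psi_\bbR(\mL_{\bbR,\chi})=\mF_{\bbR,\chi}$, and Theorem~\ref{formula} then identifies $\mF_{\bbR,\chi}$ with the real Springer sheaf $\calS_{\bbR,\chi}=(\pi_\bbR)_!\widetilde\mL_\chi$. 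This gives the first assertion.

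For the ``in particular'' clause I would argue as follows. The sheaf $\mF_{\fp,\chi}$ is perverse (with the standard normalizing shift): the quotient $\chi_\fp\colon\fp\to\fc_\fp$ is a flat family by Kostant--Rallis~\cite{KR}, its generic fibre is the smooth $K$-orbit $\mO_{\xi_\fp}$, and the special fibre $\calN_\fp$ has the same dimension, so the nearby cycles along $\calN_\fp$ of the shifted local system $\mL_{\fp,\chi}$ is perverse by Gabber's theorem --- alternatively this perversity is part of Grinberg's analysis of $\mF_\fp$ in~\cite{G1}. Since the equivalence~\eqref{eq: intro equiv} is pushforward along a stratified homeomorphism matching the $K$- and $G_\bbR$-orbit stratifications and preserving the relevant dimensions, it is $t$-exact for the associated middle-perversity $t$-structures, hence $\calS_{\bbR,\chi}$ is perverse. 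Finally $\pi_\bbR\colon\widetilde\calN_\bbR=G_\bbR\times^{B_\bbR}\frak n_\bbR\to\calN_\bbR$ is proper, surjective (Section~\ref{Real NC}), has smooth source, and is generically finite (its fibre over a regular nilpotent element is finite), so the standard criterion --- for such a map, $\pi_!\bC$ (suitably shifted) is perverse precisely when $\pi$ is semismall --- shows $\pi_\bbR$ is semismall.

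The step I expect to need the most care is this last, perversity-theoretic bookkeeping rather than the construction itself. One must be honest about the real-analytic (as opposed to complex-algebraic) nature of $\calZ_\bbR$, $\calN_\bbR$ and $\widetilde\calN_\bbR$, check that the relevant middle-perversity $t$-structures are available --- here one uses that real nilpotent orbits have even real dimension --- and that pushforward along the homeomorphism $\calN_\bbR\is\calN_\fp$ is genuinely $t$-exact, and verify that the shifts implicit in the definitions of $\Psi_\bbR$, $\Psi_\fp$, $\widetilde\mL_\chi$ and $\calS_{\bbR,\chi}$ are normalized consistently, so that the isomorphism $\mF_{\bbR,\chi}\is\calS_{\bbR,\chi}$ of Theorem~\ref{formula} transports perversity correctly. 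Everything else is a matter of assembling results already in place.
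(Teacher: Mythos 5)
Your argument for the main assertion is precisely the paper's: assemble the commutative square~\eqref{square} from the $K_\bbR\times\bbR_{>0}$-equivariant homeomorphism $\calZ_\bbR\is\calZ_\fp$, match $\mL_{\bbR,\chi}$ with $\mL_{\fp,\chi}$ under the equivalence on the regular semisimple orbits, and invoke Theorem~\ref{formula}. The ``in particular'' clause is left implicit in the paper, and your elaboration via perversity of $\mF_{\fp,\chi}$ and $t$-exactness of the homeomorphism-induced equivalence (using the even-dimensionality of real nilpotent orbits and the dimension-matching from Kostant--Sekiguchi) is the intended mechanism --- though the equivalence ``perverse $\Leftrightarrow$ semismall'' for a proper real-analytic map from a smooth source is more delicate than the complex-algebraic statement you cite, since over $\bC$-coefficients the top compactly-supported cohomology of a compact real variety can vanish, so your caution about the final bookkeeping is well placed.
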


\quash{
\begin{remark}
If $\fp$ contains a regular nilpotent element in $\fg$, for example, when 
$\fg_\bbR$ is quasi-split, then by \cite[Theorem 3.6]{P}
the natural map
$\fc_\fp\to\fc$
is a closed embedding and we have 
$\fc_{\fp,i\bbR}=\fc_{\fp,i\bbR}'$.
\end{remark}
}

{}

\end{document}